\newcommand{\klockan}{\the\hours:{\ifnum\minutes<10 0\fi}\the\minutes}
\newcommand{\tid}{\today\ \klockan}
\newcommand{\prtid}{\smash{\raise 10mm \hbox{\LaTeX ed \tid}}}
\renewcommand{\prtid}{}
\def\sectionmark#1{} 
\def\subsectionmark#1{}
\newcommand{\sectnr}{\ifnum \c@secnumdepth >\z@
                 \thesection.\hskip 1em\relax \fi}
\def\@evenhead{\footnotesize\rm\thepage\hfil\leftmark\hfil\llap{\prtid}}
\def\@oddhead{\footnotesize\rm\rlap{\prtid}\hfil\rightmark\hfil\thepage}
\def\tableofcontents{\section*{Contents} 
 \@starttoc{toc}}
\def\@biblabel#1{#1.}
\let\Thebibliography=\thebibliography
\renewcommand{\thebibliography}[1]{\def\@mkboth##1##2{}\Thebibliography{#1}
\addcontentsline{toc}{section}{References}
\frenchspacing 
\setlength{\@topsep}{0pt}
\setlength{\itemsep}{0pt}%
\setlength{\parskip}{0pt plus 2pt}%
}
\def\mdots@{\mathinner.\nonscript\!.%
 \ifx\next,.\else\ifx\next;.\else\ifx\next..\else
 \nonscript\!\mathinner.\fi\fi\fi}
\let\ldots\mdots@
\let\cdots\mdots@
\let\dotso\mdots@
\let\dotsb\mdots@
\let\dotsm\mdots@
\let\dotsc\mdots@
\def\vdots{\vbox{\baselineskip2.8\p@ \lineskiplimit\z@
    \kern6\p@\hbox{.}\hbox{.}\hbox{.}\kern3\p@}}
\def\ddots{\mathinner{\mkern1mu\raise8.6\p@\vbox{\kern7\p@\hbox{.}}%
    \raise5.8\p@\hbox{.}\raise3\p@\hbox{.}\mkern1mu}}
\let\Enumerate=\enumerate
\renewcommand{\enumerate}{\Enumerate%
\setlength{\@topsep}{0pt}
\setlength{\itemsep}{0pt}%
\setlength{\parskip}{0pt plus 1pt}%
\renewcommand{\theenumi}{\textup{(\alph{enumi})}}%
\renewcommand{\labelenumi}{\theenumi}%
}
\let\endEnumerate=\endenumerate
\renewcommand{\endenumerate}{\endEnumerate\unskip}
\def\@seccntformat#1{\csname the#1\endcsname.\quad}
\long\def\@makecaption#1#2{%
  \vskip\abovecaptionskip
  \sbox\@tempboxa{ #1. #2}%
  \ifdim \wd\@tempboxa >\hsize
    #1. #2\par
  \else
    \global \@minipagefalse
    \hb@xt@\hsize{\hfil\box\@tempboxa\hfil}%
  \fi
  \vskip\belowcaptionskip}
\newcommand{\authortitle}[2]{\author{#1}\title{#2}\markboth{#1}{#2}}
\newcommand{\art}[6]{{\sc #1, \rm #2, \it #3 \bf #4 \rm (#5), \mbox{#6}.}}
\newcommand{\artin}[3]{{\sc #1, \rm #2,  in #3.}}
\newcommand{\artnopt}[6]{{\sc #1, \rm #2, \it #3 \bf #4 \rm (#5), \mbox{#6}}}
\newcommand{\artprep}[3]{{\sc #1, \rm #2, \it #3.}}
\newcommand{\arttoappear}[3]{{\sc #1, \rm #2, to appear in \it #3}}
\newcommand{\auth}[2]{{#1, #2.}}
\newcommand{\book}[3]{{\sc #1, \it #2, \rm #3.}}
\newcommand{\AND}{{\rm and }}
\newtheoremstyle{descriptive}%
  {\topsep}   
  {\topsep}   
  {\rmfamily} 
  {}          
  {\bfseries} 
  {.}         
  { }         
  {}          
\newtheoremstyle{propositional}%
  {\topsep}   
  {\topsep}   
  {\itshape}  
  {}          
  {\bfseries} 
  {.}         
  { }         
  {}          
\theoremstyle{propositional}
\newtheorem{thm}{Theorem}[section]
\newtheorem{prop}[thm]{Proposition}
\newtheorem{lem}[thm]{Lemma}
\newtheorem{cor}[thm]{Corollary}
\theoremstyle{descriptive}
\newtheorem{deff}[thm]{Definition}
\newtheorem{example}[thm]{Example}
\newtheorem{remark}[thm]{Remark}
\newtheorem{openprob}[thm]{Open problem}
\renewenvironment{proof}[1][\proofname]{\par
  \pushQED{\qed}%
  \normalfont 
  \trivlist
  \item[\hskip\labelsep
        \itshape
    #1\@addpunct{.}]\ignorespaces
}{%
  \popQED\endtrivlist\@endpefalse
}
\newcommand{\setm}{\setminus}
\renewcommand{\subsetneq}{\varsubsetneq}
\renewcommand{\emptyset}{\varnothing}
\def\vint{\mathop{\mathchoice%
          {\setbox0\hbox{$\displaystyle\intop$}\kern 0.22\wd0%
           \vcenter{\hrule width 0.6\wd0}\kern -0.82\wd0}%
          {\setbox0\hbox{$\textstyle\intop$}\kern 0.2\wd0%
           \vcenter{\hrule width 0.6\wd0}\kern -0.8\wd0}%
          {\setbox0\hbox{$\scriptstyle\intop$}\kern 0.2\wd0%
           \vcenter{\hrule width 0.6\wd0}\kern -0.8\wd0}%
          {\setbox0\hbox{$\scriptscriptstyle\intop$}\kern 0.2\wd0%
           \vcenter{\hrule width 0.6\wd0}\kern -0.8\wd0}}%
          \mathopen{}\int}
\newcommand{\Cp}{{C_p}}
\newcommand{\bCp}{{\protect\itoverline{C}_p}}
\DeclareMathOperator{\diam}{diam}
\newcommand{\grad}{\nabla}
\DeclareMathOperator{\dist}{dist}
\DeclareMathOperator{\inner}{inner}
\DeclareMathOperator{\Lip}{Lip}
\DeclareMathOperator*{\essliminf}{ess\,lim\,inf}
\DeclareMathOperator*{\essinf}{ess\,inf}
\newcommand{\bdry}{\partial}
\newcommand{\bdy}{\bdry}
\newcommand{\loc}{_{\rm loc}}
{\catcode`p =12 \catcode`t =12 \gdef\eeaa#1pt{#1}}      
\def\accentadjtext#1{\setbox0\hbox{$#1$}\kern   
                \expandafter\eeaa\the\fontdimen1\textfont1 \ht0 }
\def\accentadjscript#1{\setbox0\hbox{$#1$}\kern 
                \expandafter\eeaa\the\fontdimen1\scriptfont1 \ht0 }
\def\accentadjscriptscript#1{\setbox0\hbox{$#1$}\kern   
                \expandafter\eeaa\the\fontdimen1\scriptscriptfont1 \ht0 }
\def\accentadjtextback#1{\setbox0\hbox{$#1$}\kern       
                -\expandafter\eeaa\the\fontdimen1\textfont1 \ht0 }
\def\accentadjscriptback#1{\setbox0\hbox{$#1$}\kern     
                -\expandafter\eeaa\the\fontdimen1\scriptfont1 \ht0 }
\def\accentadjscriptscriptback#1{\setbox0\hbox{$#1$}\kern 
                -\expandafter\eeaa\the\fontdimen1\scriptscriptfont1 \ht0 }
\def\itoverline#1{{\mathsurround0pt\mathchoice
        {\rlap{$\accentadjtext{\displaystyle #1}
                \accentadjtext{\vrule height1.593pt}
                \overline{\phantom{\displaystyle #1}
                \accentadjtextback{\displaystyle #1}}$}{#1}}
        {\rlap{$\accentadjtext{\textstyle #1}
                \accentadjtext{\vrule height1.593pt}
                \overline{\phantom{\textstyle #1}
                \accentadjtextback{\textstyle #1}}$}{#1}}
        {\rlap{$\accentadjscript{\scriptstyle #1}
                \accentadjscript{\vrule height1.593pt}
                \overline{\phantom{\scriptstyle #1}
                \accentadjscriptback{\scriptstyle #1}}$}{#1}}
        {\rlap{$\accentadjscriptscript{\scriptscriptstyle #1}
                \accentadjscriptscript{\vrule height1.593pt}
                \overline{\phantom{\scriptscriptstyle #1}
                \accentadjscriptscriptback{\scriptscriptstyle #1}}$}{#1}}}}
\def\itunderline#1{{\mathsurround0pt\mathchoice
        {\rlap{$\underline{\phantom{\displaystyle #1}
                \accentadjtextback{\displaystyle #1}}$}{#1}}
        {\rlap{$\underline{\phantom{\textstyle #1}
                \accentadjtextback{\textstyle #1}}$}{#1}}
        {\rlap{$\underline{\phantom{\scriptstyle #1}
                \accentadjscriptback{\scriptstyle #1}}$}{#1}}
        {\rlap{$\underline{\phantom{\scriptscriptstyle #1}
                \accentadjscriptscriptback{\scriptscriptstyle #1}}$}{#1}}}}
\newcommand{\al}{\alpha}
\newcommand{\alp}{\alpha}
\newcommand{\be}{\beta}
\newcommand{\ga}{\gamma}
\newcommand{\dmu}{d\mu}
\newcommand{\de}{\delta}
\newcommand{\eps}{\varepsilon}
\newcommand{\La}{\Lambda}
\newcommand{\Om}{\Omega}
\newcommand{\clOmprim}{{\overline{\Om}\mspace{1mu}}'}
\newcommand{\clOmm}{{\overline{\Om}\mspace{1mu}}^M}
\newcommand{\clGM}{{\itoverline{G}\mspace{1mu}}^M}
\newcommand{\clOmGm}{{\overline{\Om}\mspace{1mu}}^{G}}  
\newcommand{\clOmP}{{\overline{\Om}\mspace{1mu}}^P}
\newcommand{\OmGm}{{\Om^{G}}}  
\newcommand{\clOm}{{\overline{\Om}}}
\newcommand{\Omm}{{\Om^M}}
\newcommand{\Ommo}{{\Om^M_0}}
\newcommand{\bdym}{{\bdy_M}}
\newcommand{\bdyGM}{{\bdy_{\clGM}}}
\renewcommand{\phi}{\varphi}
\newcommand{\p}{{$p\mspace{1mu}$}}   
\newcommand{\R}{\mathbf{R}}
\newcommand{\eR}{{\overline{\R}}}
\newcommand{\C}{\mathbf{C}}
\newcommand{\Kt}{\widetilde{K}}
\newcommand{\Ga}{\Gamma}
\newcommand{\Np}{N^{1,p}}
\newcommand{\Nploc}{N^{1,p}\loc}
\newcommand{\Lploc}{L^p\loc}
\newcommand{\Hp}{P}                 
\newcommand{\Hpind}[1]{P_{#1}}      
\newcommand{\uHpind}[1]{\itoverline{P}_{#1}}      
\newcommand{\lHpind}[1]{\itunderline{P}_{#1}}      
\newcommand{\oHp}{H}                
\newcommand{\oHpind}[1]{H_{#1}}     
\newcommand{\A}{\mathcal{A}}%
\newcommand{\At}{\widetilde{\mathcal{A}}}%
\newcommand{\K}{\mathcal{K}}%
\newcommand{\UU}{\mathcal{U}}%
\newcommand{\LL}{\mathcal{L}}%
\newcommand{\gat}{{\widetilde{\gamma}}}
\newcommand{\ut}{\tilde{u}}
\newcommand{\gt}{\tilde{g}}
\newcommand{\Ut}{\widetilde{U}}
\newcommand{\ft}{\tilde{f}}
\newcommand{\din}{d_{\inner}}
\newcommand{\gahat}{\widehat{\ga}}
\newcommand{\dMto}{\overset{d_M}\longrightarrow}
\newcommand{\dGMto}{\overset{\dGM}\longrightarrow}
\newcommand{\dM}{d_M}
\newcommand{\dGM}{d_{G}} 
\numberwithin{equation}{section}
\newenvironment{ack}{\medskip{\it Acknowledgement.}}{}
\begin{document}

\authortitle{Anders Bj\"orn, Jana Bj\"orn
	and Nageswari Shanmugalingam}
{The Dirichlet problem for \p-harmonic functions with respect to
the Mazurkiewicz boundary}
\title{The Dirichlet problem for \p-harmonic functions with respect to
the Mazurkiewicz boundary, and new capacities}
\author{
Anders Bj\"orn \\
\it\small Department of Mathematics, Link\"opings universitet, \\
\it\small SE-581 83 Link\"oping, Sweden\/{\rm ;}
\it \small anders.bjorn@liu.se
\\
\\
Jana Bj\"orn \\
\it\small Department of Mathematics, Link\"opings universitet, \\
\it\small SE-581 83 Link\"oping, Sweden\/{\rm ;}
\it \small jana.bjorn@liu.se
\\
\\
Nageswari Shanmugalingam\footnote{Corresponding author}
\\
\it \small  Department of Mathematical Sciences, University of Cincinnati, \\
\it \small  P.O.\ Box 210025, Cincinnati, OH 45221-0025, U.S.A.\/{\rm ;}
\it \small shanmun@uc.edu
}

\date{}
\maketitle

\noindent{\small
{\bf Abstract} 
In this paper we develop the Perron method for solving the Dirichlet problem
for the analog of the \p-Laplacian,
i.e.\ for \p-harmonic functions, 
with  Mazurkiewicz boundary values.  
The setting considered here is that of metric spaces, 
where the boundary of the 
domain in question is replaced with the Mazurkiewicz boundary. 
Resolutivity for Sobolev  and continuous functions, 
as well as invariance results for perturbations on small sets,
are obtained. We use these results to improve the known resolutivity and invariance 
results for functions on the standard (metric) boundary.
We also illustrate the results of this paper by
discussing several examples.
} 

\bigskip
\noindent
{\small \emph{Key words and phrases}: 
capacity, Dirichlet problem, doubling measure, finite connectivity at the boundary, 
inner metric,  Mazurkiewicz distance, 
metric space, nonlinear potential theory, 
perturbation invariance, Poincar\'e inequality,
 \p-energy minimizer, \p-harmonic function,
Perron method, resolutive.
}

\medskip
\noindent
{\small Mathematics Subject Classification (2010): 
Primary: 31E05; Secondary: 30L99, 31C45, 35J66, 49Q20.
}

\section{Introduction}

When considering the Dirichlet problem 
on the slit disc $B((0,0),1) \setm [0,1] \subset \R^2$
it is quite natural to allow for two boundary conditions at 
each point in the slit $(0,1]$ (apart from the tip),
one from above and one from below.

Our aim in this paper is to 
give a general approach (via the Mazurkiewicz boundary)
suitable for solving this generalized \p-harmonic Dirichlet problem for a large
class of domains in $\R^n$, as well as in metric spaces. We 
develop the  Perron method for the Mazurkiewicz boundary and 
obtain  resolutivity and invariance results for it,
which also improve a number of older results for 
the given boundary. These results are new even in the
(unweighted) Euclidean setting.

A continuous function $u$ is \emph{\p-harmonic}, $1<p<\infty$,
if it locally minimizes the \p-energy integral
\[
      \int |\nabla u|^p \, dx.
\]
When $p=2$ this reduces to the classical 
harmonic functions, while
for $p \ne 2$ it is  the main prototype for a
nonlinear elliptic equation.
To generalize this to metric spaces, 
the concept of upper gradients (introduced
by Heinonen and Koskela in \cite{HeKo98}) is used, leading to a
variational definition similar to the one above. In such a general setting,
there is no corresponding equation.
The nonlinear potential theory associated with
\p-harmonic functions has been studied for half a century,
first on $\R^n$ and then in various other situations (manifolds,
Heisenberg groups, graphs etc.). 
The metric space theory is more recent and was first
considered by Shanmugalingam~\cite{Sh-rev}. It gives
a unified treatment covering most of the earlier cases.
For further development see the monographs
Heinonen--Kilpel\"ainen--Martio~\cite{HeKiMa} (for weighted $\R^n$)
and Bj\"orn--Bj\"orn~\cite{BBbook} (for metric spaces)
and the references therein.

To describe our approach, let $\Om$ be a bounded domain in $\R^n$.
We define the \emph{Mazurkiewicz distance} $\dM$ on $\Om$ by
\[
     \dM(x,y) =\inf_E \diam E,
\]
where the infimum is taken over all connected sets $E \subset \Om$
containing $x,y \in \Om$.
(The Mazurkiewicz distance was first used by
Mazurkiewicz~\cite{mazurkiewicz16} in 1916,
but goes under different names in the literature,
see Remark~\ref{rmk-M-dist}.)
The \emph{Mazurkiewicz boundary} is then defined using the completion
of $(\Om,\dM)$.
In the slit disc this leads to the desired boundary
with two boundary points corresponding to each point in the slit,
while for smooth domains it coincides with the usual boundary.
In this paper we study Perron solutions for \p-harmonic
functions with respect to the Mazurkiewicz boundary.

In the Perron method, given a function $f$ on the boundary, an 
upper and a lower Perron solution are constructed using superharmonic
(or \p-superharmonic) functions. When these two solutions
coincide they give a reasonable solution to the Dirichlet problem
with $f$ as boundary values (for harmonic or
\p-harmonic functions).
The function $f$ is then called \emph{resolutive},
and the solution is denoted by $\Hp f$.

The Perron method was introduced by Perron~\cite{perron} (and independently
by Remak~\cite{remak}) in 1923 for harmonic functions. 
Wiener and Brelot made important contributions in the linear case (on $\R^n$)
leading to Brelot's resolutivity result~\cite{brelot}, which says that
the resolutive functions are exactly the $L^1$ functions with respect to
harmonic measure. 
For this reason the method is often called the Perron--Wiener--Brelot method,
but since this is less appropriate in the nonlinear situation we prefer
to name it just after Perron.

For \p-harmonic functions on $\R^n$ the Perron method was first
studied by Granlund--Lindqvist--Martio~\cite{GLM86}.
Kilpel\"ainen~\cite{Kilp89} showed that continuous
functions are resolutive,
and 
Heinonen--Kilpel\"ainen--Martio~\cite{HeKiMa}
adapted the proof to  weighted $\R^n$.
Using a different approach this
result was further generalized to
metric spaces in Bj\"orn--Bj\"orn--Shanmugaglingam~\cite{BBS2}.
Therein it was also shown that restrictions of quasicontinuous
representatives of Sobolev functions on the entire metric space (e.g.\ $\R^n$)
are resolutive.
Moreover, if $f$ is either such a Sobolev function or 
$f \in C(\bdy \Om)$,
and $h=f$ q.e.\ (i.e.\ outside 
a set of capacity zero),
then $h$ is also resolutive and $\Hp h=\Hp f$.
In this paper we improve upon both these results.

To be able to generalize this theory to the Mazurkiewicz boundary,
we need the Mazurkiewicz boundary 
(or rather the Mazurkiewicz closure) to be compact 
(although, the paper Estep--Shanmugalingam~\cite{ES}
avoids the compactness requirement for the prime end boundary). 
It was shown by Karmazin~\cite{karmazin2008} that this happens if and only if
the domain is finitely connected at the boundary
(for a more elementary and self-contained proof of this fact 
see Bj\"orn--Bj\"orn--Shanmugaglingam~\cite{BBStop}).
Under this assumption we generalize all the results from \cite{BBS2}
mentioned above to the Mazurkiewicz boundary, with some additional improvements.
To do so we consider a new relative capacity, $\bCp$, adapted to the
topology that connects the domain to its Mazurkiewicz boundary.
With this new capacity, 
we consider how the boundary looks from inside $\Om$, not from 
the underlying metric space or even within the closure of $\Om$.

Using the new capacity we  also improve upon the results in~\cite{BBS2} when 
considering the usual Perron solutions with respect to the given metric.
In the invariance results there, we replace the usual Sobolev capacity with the smaller
capacity $\bCp$, thus allowing for perturbations on larger sets.
We also obtain resolutivity for more functions.

For harmonic functions on the slit disc there are two classical approaches: the prime
end boundary introduced by Carath\'eodory~\cite{car} in 1913 and the Martin
boundary introduced by Martin~\cite{martin} in 1941.

The minimal Martin kernel, a generalization of the Poisson kernel, is built 
for the harmonic equation,
and gives a very well suited boundary for the linear harmonic 
Dirichlet problem. First proposed by Martin~\cite{martin}, this notion was
developed further by many, including Ancona~\cite{Anc1}, \cite{Anc2} and
Anderson--Schoen~\cite{AndSch}.
There are \p-harmonic generalizations of the Martin boundary,
see e.g.\ Lewis--Nystr\"om~\cite{lewisNysAnnals},
but unlike in the linear case $p=2$, 
these generalizations are not connected to integral representations
of solutions to the Dirichlet problem.

The prime end boundary is instead constructed directly from the 
geometry of the domain and does not rely on any underlying equation.
Carath\'eodory's original approach works very well for simply 
(and finitely) connected planar domains.
Over the years there have been many suggestions for extending
prime ends to more general situations,
by different people and with different applications in mind,
see the discussion in \cite{ABBSprime}. Recently 
Adamowicz--Bj\"orn--Bj\"orn--Shan\-mu\-ga\-lin\-gam~\cite{ABBSprime}
gave a definition of prime end boundary suitable for a large class
of domains in metric spaces.
For domains which are finitely connected at the boundary
the Mazurkiewicz boundary is homeomorphic to the prime end boundary
of \cite{ABBSprime},  see \cite[Corollary~10.9]{ABBSprime}.
Thus our results with respect to the Mazurkiewicz boundary can equivalently
be formulated for the prime end boundary in such domains.
In the special case of the topologist's comb, further improvements
upon the general results of this paper are given in A.~Bj\"orn~\cite{ABcomb}.

The outline of the paper is as follows: 
After giving a survey of background results
from first-order analysis on metric spaces in Section~\ref{sect-prelim},
we introduce the new capacity $\bCp$ in Section~\ref{sect-newcap}
and the Mazurkiewicz distance in Section~\ref{sect-dM}.
Section~\ref{sect-NpOmm} is devoted to Sobolev spaces 
with respect to the Mazurkiewicz distance.
The necessary background theory on \p-harmonic and superharmonic
functions is given in Section~\ref{sect-superharm}, making
it possible to define Perron solutions with respect to the
Mazurkiewicz boundary in the subsequent section.

Our main resolutivity result is given in Theorem~\ref{thm-Newt-resolve-Omm}.
In Section~\ref{sect-bdyM-Om} we use Theorem~\ref{thm-Newt-resolve-Omm}
to show resolutivity  of continuous functions and to obtain
invariance results for perturbations along the lines indicated above.
New resolutivity and invariance results with respect to the given 
metric are described in Section~\ref{sect-resol-bdryOm}, and some 
further generalizations of the results
in Sections~\ref{sect-Perron-Omm} and~\ref{sect-bdyM-Om} are
given in Section~\ref{sect-gen-Perron}.
Section~\ref{sect-examples} is devoted to a number of examples
showing how our results can be applied.
We end the paper with an appendix comparing the
different capacities used in this paper.

\begin{ack}
This research started while the first two authors visited
the University of Cincinnati during the first half year of 2010,
and continued while the last author visited Link\"opings universitet
in March 2011. We wish to thank Tomasz Adamowicz for fruitful discussions.

The first two authors were supported by the Swedish Research Council.
The first author was also a Fulbright scholar during his
visit to the University of Cincinnati, supported by the Swedish
Fulbright Commission, while the second author was a Visiting Taft Fellow
during her visit to the University of Cincinnati,  supported by the Charles Phelps 
Taft Research Center at the University of Cincinnati.
The third author was also partially supported by the Taft Research Center, 
the Simons Foundation grant \#200474, and the NSF grant DMS-1200915.
\end{ack}

\section{Notation and preliminaries}
\label{sect-prelim}

We assume throughout the paper that $1 \le p<\infty$ 
and that $X=(X,d,\mu)$ is a metric space equipped
with a metric $d$ and a positive complete  Borel  measure $\mu$ 
such that $0<\mu(B)<\infty$ for all balls $B \subset X$
(we adopt the convention that balls are nonempty and open).
We emphasize that the $\sigma$-algebra on which $\mu$ is defined
is obtained by the completion of the Borel $\sigma$-algebra.
It follows that $X$ is separable.

We also assume that $\Om \subset X$ is a nonempty open 
set. Further standing assumptions will be given at the
end of Section~\ref{sect-newcap} and 
at the beginning of subsequent sections.

A \emph{curve} is a continuous mapping from an interval,
and a \emph{rectifiable} curve is a curve with finite length.
We will only consider curves which are nonconstant, compact and rectifiable.
A curve can thus be parameterized by its arc length $ds$. 

We follow Heinonen and Koskela~\cite{HeKo98} in introducing
upper gradients as follows (they called them very weak gradients).

\begin{deff} \label{deff-ug}
A nonnegative Borel function $g$ on $X$ is an \emph{upper gradient} 
of an extended real-valued function $f$
on $X$ if for all (nonconstant, compact and rectifiable) curves  
$\gamma: [0,l_{\gamma}] \to X$,
\begin{equation} \label{ug-cond}
        |f(\gamma(0)) - f(\gamma(l_{\gamma}))| \le \int_{\gamma} g\,ds,
\end{equation}
where we follow the convention that the left-hand side is $\infty$ 
whenever both terms therein are infinite.
If $g$ is a nonnegative measurable function on $X$
and if (\ref{ug-cond}) holds for \p-almost every curve (see below), 
then $g$ is a \emph{\p-weak upper gradient} of~$f$. 
\end{deff}

Here we say that a property holds for \emph{\p-almost every curve}
if it fails only for a curve family $\Ga$ with zero \p-modulus, 
i.e.\ there exists $0\le\rho\in L^p(X)$ such that 
$\int_\ga \rho\,ds=\infty$ for every curve $\ga\in\Ga$.
Note that a \p-weak upper gradient \emph{need not} be a Borel function,
only measurable. Given that the underlying measure $\mu$ is Borel regular,
every measurable function $g$ can be modified on a set of measure zero
to obtain a Borel function, from which it follows that 
$\int_{\gamma} g\,ds$ is defined (with a value in $[0,\infty]$) for \p-almost every  
curve $\ga$. For proofs of these and all other facts in this section 
we refer to Bj\"orn--Bj\"orn~\cite{BBbook} and
Heinonen--Koskela--Shanmugalingam--Tyson~\cite{HKSTbook}.
(Some of the references we mention below may not 
provide a proof in the generality considered here, but 
such proofs are given in \cite{BBbook}.)

The \p-weak upper gradients were introduced in
Koskela--MacManus~\cite{KoMc}. It was also shown there
that if $g \in \Lploc(X)$ is a \p-weak upper gradient of $f$,
then one can find a sequence $\{g_j\}_{j=1}^\infty$
of upper gradients of $f$ such that $g_j-g \to 0$ in $L^p(X)$.
If $f$ has an upper gradient in $\Lploc(X)$, then
it has a \emph{minimal \p-weak upper gradient} $g_f \in \Lploc(X)$
in the sense that for every \p-weak upper gradient $g \in \Lploc(X)$ of $f$ we have
$g_f \le g$ a.e., see Shan\-mu\-ga\-lin\-gam~\cite{Sh-harm}
and Haj\l asz~\cite{Haj03}. The minimal \p-weak upper gradient is well defined
up to a set of measure zero in the cone of nonnegative functions in $\Lploc(X)$.
Following Shanmugalingam~\cite{Sh-rev}, 
we define a version of Sobolev spaces on the metric space $X$.

\begin{deff} \label{deff-Np}
        Whenever $f\in L^p(X)$, let
\[
        \|f\|_{\Np(X)} = \biggl( \int_X |f|^p \, \dmu 
                + \inf_g  \int_X g^p \, \dmu \biggr)^{1/p},
\]
where the infimum is taken over all upper gradients of $f$.
The \emph{Newtonian space} on $X$ is 
\[
        \Np (X) = \{f: \|f\|_{\Np(X)} <\infty \}.
\]
\end{deff}

The space $\Np(X)/{\sim}$, where  $f \sim h$ if and only if $\|f-h\|_{\Np(X)}=0$,
is a Banach space and a lattice, see Shan\-mu\-ga\-lin\-gam~\cite{Sh-rev}.
In this paper we assume that functions are defined everywhere,
not just up to an equivalence class in the corresponding function space.
When we say that $f \in \Np(X)$ we thus assume that $f$
is a function defined everywhere. We say  that $f \in \Nploc(\Om)$ if
for every $x \in \Om$ there exists $r_x$ such that 
$B(x,r_x)\subset\Om$ and $f \in \Np(B(x,r_x))$.

If $f,h \in \Nploc(X)$, then $g_f=g_h$ a.e.\ in $\{x \in X : f(x)=h(x)\}$,
in particular $g_{\min\{f,c\}}=g_f \chi_{f < c}$ for $c \in \R$.
For these and other facts on \p-weak upper gradients, see, e.g.,
Bj\"orn--Bj\"orn~\cite{BBpreprint}, Section~3.

\begin{deff}
Let $\Om\subset X$ be an open set.
The (Sobolev) \emph{capacity} (with respect to $\Om$)
of a set $E \subset \Om$  is the number 
\begin{equation*} 
   \Cp(E;\Om) =\inf_u    \|u\|_{\Np(\Om)}^p,
\end{equation*}
where the infimum is taken over all $u\in \Np (\Om) $ such that $u=1$ on $E$.
\end{deff}

Observe that the above capacity is \emph{not} the so-called variational
capacity, which is obtained by minimizing $\|g_u\|_{L^p(\Om)}^p$ over $u\in \Np_0 (\Om) $ 
such that $u=1$ on $E$.

For a given set $E$ we will consider the capacity taken with respect
to different sets $\Om$. When the capacity is taken with respect to the underlying
metric space  $X$, we usually drop $X$ from the notation and merely write $\Cp(E)$.
The capacity is countably subadditive. For this and other properties as well as 
equivalent definitions of the capacity 
we refer to Bj\"orn--Bj\"orn~\cite{BBbook}.

We say that a property holds \emph{quasieverywhere} (q.e.)\ 
if the set of points  for which the property does not hold 
has capacity zero. When needed, we shall specify the capacity with 
respect to which q.e.\ is taken. The capacity is the correct gauge 
for distinguishing between two Newtonian functions. 
If $u \in \Np(X)$, then $u \sim v$ if and only if $u=v$ q.e.
Moreover, Corollary~3.3 in Shan\-mu\-ga\-lin\-gam~\cite{Sh-rev} shows that 
if $u,v \in \Np(X)$ and $u= v$ a.e., then $u=v$ q.e.

We now introduce the space of Newtonian functions with 
zero boundary values  as follows: 
\[
\Np_0(\Om;A)=\{f|_{\Om} : f \in \Np(A) \text{ and }
        f=0 \text{ in } A \setm \Om\},
\]
where $A\subset X$ is a measurable set containing $\Om$.
(In Section~\ref{sect-NpOmm}, this definition will be applied also to $\Om$
equipped with the Mazurkiewicz distance $d_M$, and then $A$ will be 
replaced by the Mazurkiewicz closure $\clOmm$ of $\Om$ with respect to
this metric.) As with the capacity, when $A=X$
we usually drop $X$ from the notation and merely write $\Np_0(\Om)$.
It is fairly easy to see that $\Np_0(\Om)=\Np_0(\Om;\overline{\Om})$,
see Bj\"orn--Bj\"orn~\cite{BBbook}.
One can also replace the assumption ``$f=0$ on $A\setm \Om$''
with ``$f=0$ q.e.\ on $A \setm \Om$''
without changing the obtained space $\Np_0(\Om; A)$.
Functions from $\Np_0(\Om; A)$ can be defined to be zero
q.e.\ in $A\setm \Om$ and we will regard them in that sense if needed.
Here q.e.\ is taken with respect to the ambient set $A$.

We say that $\mu$  is \emph{doubling} if 
there exists a \emph{doubling constant} $C>0$ such that for all balls 
$B=B(x_0,r):=\{x\in X: d(x,x_0)<r\}$ in~$X$,
\begin{equation*}
        0 < \mu(2B) \le C \mu(B) < \infty,
\end{equation*}
where $\lambda B=B(x_0,\lambda r)$.  

\begin{deff} \label{def.PI.}
We say that $X$ supports a \emph{\p-Poincar\'e inequality} if
there exist constants $C>0$ and $\lambda \ge 1$
such that for all balls $B \subset X$, 
all integrable functions $f$ on $X$ and all upper gradients $g$ of $f$, 
\begin{equation} \label{PI-ineq}
        \vint_{B} |f-f_B| \,\dmu
        \le C (\diam B) \biggl( \vint_{\lambda B} g^{p} \,\dmu \biggr)^{1/p},
\end{equation}
where $ f_B 
 :=\vint_B f \,\dmu 
:= \int_B f\, d\mu/\mu(B)$.
\end{deff}

In the definition of Poincar\'e inequality we can equivalently assume
that $g$ is a \p-weak upper gradient---see the comments above.

If $X$ is complete and  supports a \p-Poincar\'e inequality
and $\mu$ is doubling, then Lipschitz functions
are dense in $\Np(X)$, see Shan\-mu\-ga\-lin\-gam~\cite{Sh-rev}, and the functions
in $\Np(X)$
and those in $\Np(\Om)$ are \emph{quasicontinuous}
(see Theorem~\ref{thm-quasicont} below), i.e.\ 
for every $\eps>0$ there is an open set $U$ such that
$\Cp(U)<\eps$ and $f|_{X \setm U}$ is real-valued continuous.
This means that in the Euclidean setting, $\Np(\R^n)$ is the 
refined Sobolev space as defined in
Heinonen--Kilpel\"ainen--Martio~\cite[p.~96]{HeKiMa}; we refer interested readers
to~\cite{Sh-rev} for this fact, or to  Bj\"orn--Bj\"orn~\cite{BBbook} 
for a proof of this fact valid in weighted $\R^n$.
This is the main reason why, unlike in the classical Euclidean setting, we do not
require the functions $u$ in the definition of capacity to be $1$ in a 
neighbourhood of $E$. Moreover,  $X$ is \emph{quasiconvex}, i.e.\
there is a constant $L$ such that any two points
$x,y \in X$ can be connected by a curve of length
at most $Ld(x,y)$. This fact was first observed by Semmes.
For a proof see Haj\l asz--Koskela~\cite[Proposition~4.4]{HaKo}.
Recall also that $X$ is \emph{proper} if all closed bounded subsets of 
$X$ are compact. If $\mu$ is doubling then $X$ is complete if and only if $X$ is proper.
 
We will need the following results from 
Bj\"orn--Bj\"orn--Shan\-mu\-ga\-lin\-gam~\cite{BBS5}.

\begin{thm} \label{thm-quasicont}
\textup{(\cite[Theorem~1.1 and Corollary~1.3]{BBS5}
or \cite[Theorems~5.29 and 5.31]{BBbook})}
Assume that $X$ is proper  and that 
continuous functions are dense in $\Np(X)$.
\textup{(}This happens if, for example, $X$ is complete with $\mu$ doubling 
and supporting a \p-Poincar\'e inequality.\textup{)} Then 
\begin{enumerate}
\item \label{Cp-outer}
$\Cp$ is an \emph{outer capacity}, i.e.\  for all $E\subset \Om$,
\[
	\Cp(E)
	=\inf_{\substack{ G \supset E \\  G 
		\text{ open}}} 
	\Cp(G);
\]
\item \label{qcont}
every $u\in\Nploc(\Om)$ is quasicontinuous in\/ $\Om$.
\end{enumerate}
\end{thm}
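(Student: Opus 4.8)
The statement to be proved, Theorem~\ref{thm-quasicont}, is quoted from \cite{BBS5} (and \cite{BBbook}), so a full self-contained proof is not expected in this paper; nonetheless, here is how I would establish it from first principles under the stated hypotheses ($X$ proper, continuous functions dense in $\Np(X)$, and in particular when $X$ is complete with $\mu$ doubling and supporting a \p-Poincar\'e inequality).

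\textbf{Plan.} The two parts are tightly linked: outerness of $\Cp$ on $\Om$ and quasicontinuity of Newtonian functions in $\Om$. I would prove quasicontinuity first on all of $X$, then localize to $\Om$, and deduce outerness as a corollary. For quasicontinuity on $X$: start with $u\in\Np(X)$ and, using the density hypothesis, pick continuous $u_j\to u$ in $\Np(X)$; passing to a subsequence we may assume $\|u_j-u_{j+1}\|_{\Np(X)}^p\le 2^{-2j}$, so in particular $\|g_{u_j-u_{j+1}}\|_{L^p(X)}$ and $\|u_j-u_{j+1}\|_{L^p(X)}$ are summable after the obvious geometric bookkeeping. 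The key analytic input is a \emph{capacitary weak-type estimate}: for any $v\in\Np(X)$ and $t>0$,
\[
   \Cp(\{x\in X:|v(x)|>t\})\le \frac{C}{t^p}\,\|v\|_{\Np(X)}^p,
\]
which follows because $\min\{|v|/t,1\}$ is an admissible competitor for the capacity of that superlevel set and has Newtonian norm controlled by $\|v\|_{\Np(X)}/t$. Applying this to $v=u_j-u_{j+1}$ with $t=2^{-j/2}$ gives $\Cp(E_j)\le C2^{-j p/2}$ where $E_j=\{|u_j-u_{j+1}|>2^{-j/2}\}$; by countable subadditivity the open-ish sets $\bigcup_{j\ge k}E_j$ have capacity $\to 0$, and by the outer-regularity-free definition of $\Cp$ used here we can enlarge each $E_j$ to an open $G_j\supset E_j$ with $\Cp(G_j)$ still small (this enlargement step itself needs the density of continuous functions, via the standard argument that $\min\{v/t,1\}$ can be approximated by a continuous function that is $>1-\delta$ on a neighbourhood of the superlevel set). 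On $X\setm\bigcup_{j\ge k}G_j$ the series $\sum(u_j-u_{j+1})$ converges uniformly, so the limit is continuous there, and it equals $u$ q.e.; choosing $k$ large makes $\Cp(\bigcup_{j\ge k}G_j)<\eps$, giving quasicontinuity of (the q.e.-representative of) $u$. Since by Corollary~3.3 of \cite{Sh-rev} quoted above a Newtonian function is determined q.e.\ by its a.e.-class, and we have assumed functions are defined everywhere, one checks the genuine pointwise-defined $u$ is itself quasicontinuous after discarding a capacity-zero set into the open set $U$ — here properness of $X$ is what lets us absorb capacity-zero sets into open sets of small capacity.

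\textbf{Localization to $\Om$ and part \ref{qcont}.} For $u\in\Nploc(\Om)$, cover $\Om$ by countably many balls $B_i$ with $2B_i\subset\Om$ (possible since $\Om$ open and $X$ separable), and choose Lipschitz cutoffs $\eta_i$ supported in $2B_i$ with $0\le\eta_i\le1$; then $\eta_i u\in\Np(X)$ (extended by $0$), so it is quasicontinuous in $X$ by the previous paragraph. A countable union of the exceptional open sets, arranged with capacities summing to $<\eps$ via a geometric series $\eps 2^{-i}$, yields an open $U$ with $\Cp(U)<\eps$ off which every $\eta_i u$, hence $u$ itself on each $B_i\setm U$, is continuous; a partition-of-unity/gluing argument gives continuity of $u|_{\Om\setm U}$. (One should here use the relative capacity $\Cp(\cdot\,;\Om)$ consistently, noting $\Cp(E)\ge c\,\Cp(E;\Om)$ is not needed in this direction since larger ambient sets only make capacity larger — the comparison that matters, $\Cp(E;\Om)\le\Cp(E;X)$ for $E\subset\Om'\Subset\Om$, is immediate from restriction of competitors.)

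\textbf{Part \ref{Cp-outer}, outerness.} Given $E\subset\Om$ and $\eps>0$, take $u\in\Np(\Om)$ with $u=1$ on $E$ and $\|u\|_{\Np(\Om)}^p<\Cp(E;\Om)+\eps$. By part \ref{qcont}, $u$ is quasicontinuous in $\Om$, so there is open $V\subset\Om$ with $\Cp(V;\Om)<\eps$ and $u|_{\Om\setm V}$ continuous; then $\{x\in\Om\setm V:u(x)>1-\delta\}$ is relatively open in $\Om\setm V$, and $G:=V\cup\{u>1-\delta\}^{\circ}$ (interior in $\Om$) is open, contains $E$, and $u/(1-\delta)$ is admissible for $\Cp(G;\Om)$; letting $\delta\to0$ and $\eps\to0$ gives $\inf_{G\supset E\text{ open}}\Cp(G;\Om)\le\Cp(E;\Om)$, while the reverse inequality is monotonicity.

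\textbf{Main obstacle.} The delicate point is the open-enlargement of superlevel sets: the definition of $\Cp$ in this paper does \emph{not} require competitors to equal $1$ on a neighbourhood of $E$, so turning a superlevel set $\{|v|>t\}$ into an open set of comparably small capacity is not free — it is exactly where the density of continuous (or Lipschitz) functions in $\Np(X)$ is used, by replacing the truncation $\min\{|v|/t,1\}$ with a nearby continuous competitor that genuinely exceeds $1$ on an open neighbourhood. Getting this approximation quantitatively right, uniformly enough to run the Borel--Cantelli-style argument, is the technical heart; everything else is bookkeeping with countable subadditivity and properness.
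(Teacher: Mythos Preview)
Your outline is essentially the standard argument from \cite{BBS5} and \cite{BBbook} (the paper itself does not prove Theorem~\ref{thm-quasicont}; it is quoted from those references), and the overall logical structure---quasicontinuity first via a Borel--Cantelli/capacitary weak-type argument, then outerness as a corollary---is correct.

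However, you have misidentified the ``main obstacle.'' You write that enlarging the superlevel sets $E_j=\{|u_j-u_{j+1}|>2^{-j/2}\}$ to open sets is ``the technical heart'' and requires a separate approximation of $\min\{|v|/t,1\}$ by continuous competitors. But you have already chosen the $u_j$ to be \emph{continuous} (this is precisely what the density hypothesis gives you), so $u_j-u_{j+1}$ is continuous and $E_j$ is automatically open. No enlargement is needed at that step; the density of continuous functions is used once, up front, to obtain the continuous approximants, and the openness of the bad sets then comes for free. Your proposed workaround is unnecessary and somewhat circular.

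The genuinely delicate point is elsewhere, and you do touch on it: on $X\setm\bigcup_{j\ge k}E_j$ the sequence $u_j$ converges uniformly to some continuous $\tilde u$, but one only knows $\tilde u=u$ q.e.\ (via $u_j\to u$ in $\Np(X)$, hence q.e.\ along a subsequence). Absorbing the residual capacity-zero set where $u\ne\tilde u$ into an open set of small capacity is what uses properness, via Proposition~\ref{prop-zero-cap-capacitable}. That proposition holds in proper spaces \emph{without} assuming density of continuous functions, so there is no circularity. This, rather than the open-enlargement of the $E_j$, is where properness genuinely enters.
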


We do not know if there is any metric measure space $X$ for which 
there is  a nonquasicontinuous 
function in $\Np(X)$, nor if there is any $X$ such that $\Cp$ is
not an outer capacity. However, even if we do not know that continuous 
functions form a dense subclass of $\Np(X)$,  
in proper spaces we have the outer capacity property at the level of capacitary 
null sets, as the following proposition shows.

\begin{prop}    \label{prop-zero-cap-capacitable}
\textup{(\cite[Proposition~1.4]{BBS5}
(or  \cite[Proposition~5.27]{BBbook}))}
Let $X$ be proper and let $E\subset X$ with $\Cp(E)=0$.
Then for every $\eps>0$, there is an open set $U\supset E$ with $\Cp(U)<\eps$.
\end{prop}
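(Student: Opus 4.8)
The plan is to reduce the statement for a general proper space $X$ to the case treated in Theorem~\ref{thm-quasicont}, namely a space in which continuous functions are dense in $\Np$. The key point is that $\Cp(E)=0$ is a purely ``local'' condition: a set of capacity zero has measure zero, and covering $X$ by countably many balls $B_i$ (using separability of $X$) we may work in each $B_i$ separately, using countable subadditivity of $\Cp$ to reassemble the pieces at the end. So first I would fix $\eps>0$ and a countable cover $X=\bigcup_{i=1}^\infty B_i$ by balls, and aim to find for each $i$ an open set $U_i \supset E \cap B_i$ with $\Cp(U_i) < \eps/2^i$; the union $U=\bigcup_i U_i$ is then open, contains $E$, and satisfies $\Cp(U) \le \sum_i \Cp(U_i) < \eps$ by countable subadditivity.

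Next, within a fixed ball $B=B_i$, I would exploit that $X$ is proper, so $\bar B$ is compact. The standard trick (this is essentially how \cite{BBS5} proceeds) is that on a compact — hence in particular locally compact — metric space, one has enough machinery to produce the required open set even without density of continuous functions: given $E \cap B$ with $\Cp(E\cap B)=0$, one takes a function $u \in \Np(X)$ with $u=1$ on $E \cap B$ and $\|u\|_{\Np(X)}^p$ small; since $\Cp(E\cap B)=0$ one can take $\|u\|_{\Np(X)}^p < \de$ for any prescribed $\de>0$. The issue is that $\{u>1/2\}$ need not be open because $u$ need not be continuous. The remedy is to first improve $u$ to a quasicontinuous (indeed, after a further approximation, lower semicontinuous) representative: using the fundamental convergence properties of $\Np$ (every Cauchy sequence in $\Np$ has a subsequence converging pointwise q.e., and one can extract one converging q.e.\ and in norm), one builds from a sequence $u_j$ with $\|u_j\|_{\Np}^p < 2^{-j}$ and $u_j = 1$ on $E\cap B$ the function $v = \sum_j u_j$, which lies in $\Np(X)$, equals $+\infty$ on $E\cap B$, and — being an increasing limit of the partial sums — has the property that for suitable truncations one can pass to a lower semicontinuous object. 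Then $\{v > t\}$ is open for each $t$, contains $E \cap B$, and has small capacity for large $t$ by the capacitary weak-type estimate $\Cp(\{v>t\}) \le t^{-p}\|v\|_{\Np}^p$ (up to the $L^p$ term, which is handled the same way).

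I expect the main obstacle to be precisely the passage from a Newtonian function of small norm to an \emph{open} superlevel set, i.e.\ the semicontinuity step, since the hypothesis deliberately drops the density of continuous functions that makes Theorem~\ref{thm-quasicont} work. The resolution is to observe that one does not need genuine lower semicontinuity everywhere, only on the complement of a set of capacity zero, and that the series construction $v=\sum_j u_j$ (with each $u_j$ chosen via Lipschitz or upper-gradient approximations so that the partial sums are controlled) yields a function whose superlevel sets differ from open sets by capacity-zero sets; one then absorbs that capacity-zero set into the next stage of the $\eps/2^i$ budget. Since the statement we are proving is itself the ``bootstrap'' that upgrades capacity-zero sets to the outer-capacity property, this self-referential structure has to be unwound carefully — one proves it first for a single quasicontinuity-type approximation and then iterates. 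Everything else (countable subadditivity, the weak-type capacity estimate, properness giving local compactness) is routine and quotable from \cite{BBbook}.

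Finally I would assemble: with $U_i$ produced in each ball and $U=\bigcup_i U_i$ open with $\Cp(U)<\eps$ and $E \subset U$, the proof is complete; one should remark that no Poincar\'e inequality or doubling is used, only properness, which is why the statement is genuinely more general than Theorem~\ref{thm-quasicont}\,\ref{Cp-outer}.
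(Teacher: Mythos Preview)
The paper does not give its own proof of this proposition; it is quoted from \cite{BBS5} and \cite{BBbook}. So the comparison is with the argument in those references, and the question is whether your sketch actually closes.

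It does not. You correctly locate the obstacle---turning a Newtonian function of small norm into an \emph{open} superlevel set---but your proposed resolution is circular, and you essentially say so yourself. You want to argue that the superlevel sets of $v=\sum_j u_j$ differ from open sets only by sets of capacity zero, and then ``absorb that capacity-zero set into the next stage of the $\eps/2^i$ budget.'' But the statement you are proving is precisely that capacity-zero sets can be engulfed by open sets of small capacity; invoking it mid-proof is bootstrapping on nothing. Moreover, the preliminary claim that the superlevel sets are open up to a capacity-zero set is itself quasicontinuity of $v$, which is exactly what is \emph{not} available without density of continuous functions. So the iteration never gets off the ground: the first step already assumes the conclusion.

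The missing idea is the Vitali--Carath\'eodory theorem, and this is where properness is genuinely used (local compactness is needed for it). One takes admissible $u_j$ with upper gradients $g_j$ of small $L^p$ norm and replaces each $g_j$ from above by a lower semicontinuous $\rho_j\ge g_j$ still of small $L^p$ norm. From the lsc $\rho=\sum_j\rho_j$ one then builds, via infima of curve integrals, an honestly lower semicontinuous function in $\Np(X)$ that equals $+\infty$ on $E$ and has small norm; its superlevel sets are then genuinely open and give the required $U$. The covering by balls and the $\eps/2^i$ bookkeeping are harmless but unnecessary---the argument is global once you have the lsc upper gradient.
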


\section{The capacity \texorpdfstring{$\bCp(\,\cdot\,;\Om)$}{}}
\label{sect-newcap}

In this section we introduce a new capacity $\bCp(\,\cdot\,;\Om)$, which 
is useful in the study of Perron solutions later in the paper. 

\begin{deff} \label{deff-bCp}
For $E \subset \overline{\Om}$  let
\[
     \bCp(E;\Om)= \inf_{u \in \A_E} \|u\|_{\Np(\Om)}^p,
\]
where $u \in \A_E$ if $u \in \Np(\Om)$ satisfies
both $u \ge 1$ on $E \cap \Om$ and 
\[
      \liminf_{\Om \ni y \to x} u(y) \ge 1
	\quad \text{for all } x \in E \cap \bdy \Om.
\]
\end{deff}

For $E \subset \Om$ the new capacity $\bCp(E;\Om)$ equals $\Cp(E;\Om)$.
The novelty here is that we extend the ``$\Om$-capacity'' to sets in 
the closure $\overline{\Om}$. On the closure one may of course consider the 
capacity $\Cp(\,\cdot\,;\overline{\Om})$, but the new
capacity, being smaller (see the appendix), 
makes some of our results more general.
See the appendix for a comparison of various
capacities and Section~\ref{sect-examples} for
examples where we obtain better results using the new capacity.

Let us deduce some of the properties of the new capacity  $\bCp$.
The properties of subadditivity and outer capacity will be important.
By truncation it is easy to see that one may as well
take the infimum over all $u \in \At_E:=\{u \in \A_E : 0 \le u \le 1 \}$. 

\begin{prop} \label{prop-Cp} 
Let $E,E_1, E_2,\ldots$ be arbitrary subsets of\/ $\overline{\Om}$.
Then
\begin{enumerate}
\renewcommand{\theenumi}{\textup{(\roman{enumi})}}%
\item \label{Cp-emptyset}
  $\bCp(\emptyset;\Om)=0$\textup{;}
\item \label{Cp-meas}
  $\mu(E \cap \Om) \le \bCp(E;\Om)$\textup{;}
\item \label{Cp-subset}
  if $E_1 \subset E_2$, then $\bCp(E_1;\Om) \le \bCp(E_2;\Om)$\textup{;}
\item \label{Cp-subadd}
  $\bCp(\,\cdot\,;\Om)$ is countably subadditive, i.e.
  \[
      \bCp\biggl(\bigcup_{i=1}^\infty E_i;\Om\biggr) 
          \le \sum_{i=1}^\infty \bCp(E_i;\Om).
  \]
\end{enumerate}
\end{prop}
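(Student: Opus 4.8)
The plan is to verify the four properties directly from Definition~\ref{deff-bCp}, following the standard template for Sobolev-type capacities but paying attention to the mixed nature of the competitor class $\A_E$ (a pointwise inequality on $E\cap\Om$ together with a $\liminf$ condition on $E\cap\bdy\Om$).

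For \ref{Cp-emptyset}: when $E=\emptyset$ both conditions defining $\A_E$ are vacuous, so $u\equiv0\in\A_\emptyset$ and $\bCp(\emptyset;\Om)\le\|0\|_{\Np(\Om)}^p=0$. For \ref{Cp-meas}: if $u\in\At_E$ (using the reduction noted before the proposition, so $0\le u\le1$), then $u=1$ on $E\cap\Om$, hence $\int_\Om|u|^p\,\dmu\ge\mu(E\cap\Om)$, so $\|u\|_{\Np(\Om)}^p\ge\mu(E\cap\Om)$; taking the infimum gives the claim. For \ref{Cp-subset}: if $E_1\subset E_2$, then every $u\in\A_{E_2}$ satisfies $u\ge1$ on $E_2\cap\Om\supset E_1\cap\Om$ and the $\liminf$ condition at every point of $E_2\cap\bdy\Om\supset E_1\cap\bdy\Om$, so $\A_{E_2}\subset\A_{E_1}$ and the infimum over the larger class is smaller.

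The substantive part is \ref{Cp-subadd}. I would argue as follows. Write $E=\bigcup_{i=1}^\infty E_i$. We may assume the right-hand side is finite, otherwise there is nothing to prove. Fix $\eps>0$ and for each $i$ choose $u_i\in\A_{E_i}$ with $\|u_i\|_{\Np(\Om)}^p\le\bCp(E_i;\Om)+2^{-i}\eps$; by the truncation remark we may take $0\le u_i\le1$. Set $u=\sup_i u_i$. Then $0\le u\le1$, and $u\ge1$ on $E\cap\Om=\bigcup_i(E_i\cap\Om)$ since on $E_i\cap\Om$ we have $u\ge u_i\ge1$. At a point $x\in E\cap\bdy\Om$ we have $x\in E_i\cap\bdy\Om$ for some $i$, and then $\liminf_{\Om\ni y\to x}u(y)\ge\liminf_{\Om\ni y\to x}u_i(y)\ge1$. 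So $u$ satisfies the two membership conditions for $\A_E$; it remains to check $u\in\Np(\Om)$ with $\|u\|_{\Np(\Om)}^p\le\sum_i\|u_i\|_{\Np(\Om)}^p$. This is the one point requiring a genuine lemma about Newtonian spaces rather than bare definitions: the countable supremum of Newtonian functions, each bounded by $1$, with $\sum_i\|u_i\|_{\Np(\Om)}^p<\infty$, lies in $\Np(\Om)$ and one has the estimate $\int_\Om|u|^p\,\dmu\le\sum_i\int_\Om|u_i|^p\,\dmu$ (immediate since $|u|^p=\sup_i|u_i|^p\le\sum_i|u_i|^p$) together with $g_u\le(\sum_i g_{u_i}^p)^{1/p}$ a.e., hence $\int_\Om g_u^p\,\dmu\le\sum_i\int_\Om g_{u_i}^p\,\dmu$. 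The second inequality is exactly the proof of countable subadditivity of the ordinary Sobolev capacity $\Cp(\,\cdot\,;\Om)$, which is available from Bj\"orn--Bj\"orn~\cite{BBbook} as cited in Section~\ref{sect-prelim}; concretely, one notes that a $\Np$-convergent (partial-sum) argument applies because $v_n:=\sup_{i\le n}u_i$ forms a Cauchy sequence in $\Np(\Om)$ — using $g_{v_n}\le\max_{i\le n}g_{u_i}$ and the tail estimate $\|v_{n+k}-v_n\|_{\Np(\Om)}^p\le\sum_{i>n}\|u_i\|_{\Np(\Om)}^p\to0$ — and the limit is $u$ (up to a set of capacity zero, which does not affect the two defining conditions since they are preserved; on $E\cap\Om$ one may simply replace $u$ by $\max\{u,1\}$ on the exceptional set). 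Combining, $\|u\|_{\Np(\Om)}^p\le\sum_i\|u_i\|_{\Np(\Om)}^p\le\sum_i\bCp(E_i;\Om)+\eps$, and letting $\eps\to0$ yields $\bCp(E;\Om)\le\sum_i\bCp(E_i;\Om)$.

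The main obstacle, then, is purely the verification that $u=\sup_i u_i\in\Np(\Om)$ with the asserted norm bound — i.e.\ importing the standard subadditivity argument for Newtonian capacity into this modified setting — while checking that passing to the $\Np$-limit does not destroy membership in $\A_E$. The latter is handled by the observations that the $\liminf$ condition is insensitive to changing $u_i$ (and hence $u$) on a set of capacity zero is \emph{not} quite automatic at boundary points, so instead I would keep the honest supremum $u=\sup_i u_i$ as the competitor (it manifestly satisfies both conditions pointwise) and separately argue $u\in\Np(\Om)$ by showing $u$ agrees q.e.\ with the $\Np$-limit of $\{v_n\}$ and that $\Np$ membership is a property of the q.e.-equivalence class together with having a representative satisfying the bound; since $\|v_n\|_{\Np(\Om)}$ is bounded and $v_n\to u$ q.e., standard Fatou-type lower semicontinuity of the $\Np$-norm (Shanmugalingam~\cite{Sh-rev}) gives $\|u\|_{\Np(\Om)}^p\le\liminf_n\|v_n\|_{\Np(\Om)}^p\le\sum_i\|u_i\|_{\Np(\Om)}^p$. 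Everything else is routine.
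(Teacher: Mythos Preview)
Your proof is correct, and for parts \ref{Cp-emptyset}--\ref{Cp-subset} it matches the paper (which simply declares these ``immediate from the definition''). For part \ref{Cp-subadd} you and the paper both take $u=\sup_i u_i$ and check $u\in\A_E$, but the paper reaches $u\in\Np(\Om)$ with the needed norm bound by a shorter route: rather than working with the minimal \p-weak upper gradients $g_{u_i}$ and invoking lower semicontinuity of the $\Np$-norm (or a Cauchy-sequence argument), the paper chooses near-optimal genuine upper gradients $g_i$ of $u_i$ with $\|u_i\|_{L^p}^p+\|g_i\|_{L^p}^p<\bCp(E_i;\Om)+2^{-i}\eps$, sets $g=\sup_i g_i$, and uses the elementary fact (Lemma~1.28 in \cite{BBbook}) that $g$ is then an upper gradient of $u$. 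This gives $\|u\|_{\Np(\Om)}^p\le\int_\Om\sum_i u_i^p\,d\mu+\int_\Om\sum_i g_i^p\,d\mu$ directly.

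The paper's approach is cleaner because the sup-of-upper-gradients lemma is a two-line consequence of the definition of upper gradient, so no q.e.\ issues, no Cauchy-sequence bookkeeping, and no lower-semicontinuity theorem are needed; the honest pointwise supremum is already the competitor. Your route via weak lower semicontinuity of the $\Np$-norm is valid and arguably more robust (it would survive in settings lacking such an explicit upper-gradient calculus), but here it imports heavier machinery than the problem requires. Your intermediate remark that $g_u\le(\sum_i g_{u_i}^p)^{1/p}$ is correct but is most easily seen \emph{via} the same fact the paper uses, namely that $\sup_i g_{u_i}$ is a \p-weak upper gradient of $u$, which makes the detour through partial sums and Fatou unnecessary.
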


\begin{proof}
The claims \ref{Cp-emptyset}--\ref{Cp-subset} are immediate from the definition.
We now prove~\ref{Cp-subadd}.

Let $\eps>0$, and choose $u_i$ with $u_i \in \At_{E_i}$
and  upper gradients $g_i$ in $\Om$ such that
\[
    \|u_i\|_{L^p(\Om)}^p+ \|g_i\|_{L^p(\Om)}^p \le \bCp(E_i;\Om)+ \frac{\eps}{2^i}.
\]
Let $u=\sup_i u_i$ and $g=\sup_i g_i$.
It is an easy exercise to see that $g$ is an upper gradient of $u$,
see Lemma~1.28 in Bj\"orn--Bj\"orn~\cite{BBbook} for a proof.
Clearly $u \in \At_E$, where $E=\bigcup_{i=1}^\infty E_i$. Hence
\[
      \bCp(E;\Om)
           \le \|u\|_{\Np(\Om)}^p
       \le \int_\Om \sum_{i=1}^\infty u_i^p \,d\mu 
              + \int_\Om \sum_{i=1}^\infty g_i^p \,d\mu   
\le \sum_{i=1}^\infty \Bigl( \bCp(E_i;\Om)+ \frac{\eps}{2^i}\Bigr).
\]
Letting $\eps \to 0$ completes the proof of \ref{Cp-subadd}.
\end{proof}

\begin{prop}\label{prop-outercap}
Assume that all functions in $\Np(\Om)$ are quasicontinuous.
Then $\bCp(\,\cdot\,;\Om)$ is an outer capacity,
i.e.\  for all $E\subset\clOm$,
\[
	\bCp(E;\Om)
	=\inf_{\substack{ G \supset E \\  G 
		\text{ relatively open in\/ }\overline{\Om}}} 
	\bCp(G;\Om).
\]
\end{prop}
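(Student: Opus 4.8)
The plan is to reduce the statement to the corresponding outer-capacity property for the ordinary Sobolev capacity $\Cp(\,\cdot\,;\Om)$ of open subsets of $\Om$, exploiting quasicontinuity to pass from a near-optimal function to an open set on which that function is close to $1$. Fix $E \subset \clOm$ and let $\eps > 0$. Since $\bCp(E;\Om) \le \bCp(G;\Om)$ for every relatively open $G \supset E$ by Proposition~\ref{prop-Cp}\,\ref{Cp-subset}, only the inequality ``$\ge$'' (up to $\eps$) needs proof; that is, I must produce a relatively open $G \supset E$ in $\clOm$ with $\bCp(G;\Om) \le \bCp(E;\Om) + C\eps$ for some harmless constant.

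First I would pick $u \in \At_E$ (so $0 \le u \le 1$) with $\|u\|_{\Np(\Om)}^p \le \bCp(E;\Om) + \eps$, and an upper gradient $g$ of $u$ with $\|u\|_{L^p(\Om)}^p + \|g\|_{L^p(\Om)}^p$ within $\eps$ of $\bCp(E;\Om)$. By hypothesis $u$ is quasicontinuous in $\Om$, so there is an open set $V \subset \Om$ with $\Cp(V;\Om) < \eps$ and $u|_{\Om \setm V}$ continuous. Now set
\[
    G_0 = \{x \in \Om : u(x) > 1-\de\} \cup V
\]
for a small $\de \in (0,1)$; since $\{x \in \Om\setm V : u(x) > 1-\de\}$ is relatively open in $\Om\setm V$ by continuity, $G_0$ is open in $\Om$. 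The key point is to enlarge $G_0$ to a relatively open subset $G$ of $\clOm$ that still contains $E$: for each $x \in E \cap \bdy\Om$ the condition $\liminf_{\Om\ni y\to x} u(y) \ge 1$ forces a ball $B(x,r_x)$ with $u > 1-\de$ on $B(x,r_x)\cap\Om$, and I let $G = G_0 \cup \bigcup_{x \in E\cap\bdy\Om} (B(x,r_x)\cap\clOm)$, which is relatively open in $\clOm$ and contains $E$ (the part $E\cap\Om$ lies in $G_0$ because $u\ge 1$ there). To bound $\bCp(G;\Om)$, I use the competitor
\[
    w = \min\Bigl\{1,\ \frac{u}{1-\de}\Bigr\} + \chi_V,
\]
truncated above at $1$; one checks $w \in \At_G$ (it is $\ge 1$ on $G\cap\Om$ by construction and has the right $\liminf$ behaviour at $G\cap\bdy\Om$), and by subadditivity-type estimates together with $\|v\|_{\Np(\Om)}^p \le \Cp(V;\Om) < \eps$ for the capacitary competitor $v$ for $V$,
\[
    \bCp(G;\Om) \le \|w\|_{\Np(\Om)}^p
      \le \frac{1}{(1-\de)^p}\|u\|_{\Np(\Om)}^p + C\eps
      \le \frac{\bCp(E;\Om)+\eps}{(1-\de)^p} + C\eps.
\]
Letting first $\de \to 0$ and then $\eps \to 0$ yields the claim.

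The main obstacle I anticipate is the bookkeeping at the boundary $\bdy\Om$: I must make sure that the relatively open set $G$ genuinely contains $E$ (including its boundary part) while the competitor $w$ retains the defining $\liminf$-inequality on $G \cap \bdy\Om$ — this is exactly where the $\liminf$ clause in the definition of $\A_E$ is used, and it has to be combined cleanly with the quasicontinuity-based open set $V$ inside $\Om$. A secondary technical point is the handling of $\chi_V$: strictly one should use a genuine Newtonian competitor $v$ for $V$ with $v \ge 1$ on $V$ and $\|v\|_{\Np(\Om)}^p < \eps$ rather than the indicator, and then take $w = \min\{1, u/(1-\de) + v\}$, so that $w$ is admissible and the norm estimate survives the truncation (truncation does not increase the Newtonian norm). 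Everything else — subadditivity of integrals, the fact that the supremum of the scaled function and $v$ has the pointwise supremum of upper gradients as an upper gradient (Lemma~1.28 in \cite{BBbook}) — is routine.
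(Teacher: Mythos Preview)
Your proposal is correct and follows essentially the same route as the paper: pick a near-optimal $u\in\At_E$, use quasicontinuity to get the open exceptional set $V$, form the open set where $u>1-\de$ together with $V$ and the boundary balls coming from the $\liminf$ condition, and test with $w=u/(1-\de)+v$ (the paper does not truncate, and it uses a single parameter $\eps$ in place of your $\de$ and $\eps$, but these are cosmetic). Your self-correction replacing $\chi_V$ by a genuine Newtonian competitor $v$ with $v\ge1$ on $V$ and small norm is exactly what the paper does; the only minor cleanup is that the final norm estimate is cleanest at the level of $\|\cdot\|_{\Np(\Om)}$ rather than its $p$th power, since $\|w\|_{\Np(\Om)}\le(1-\de)^{-1}\|u\|_{\Np(\Om)}+\|v\|_{\Np(\Om)}$ avoids the ``$+C\eps$'' fudge.
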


By Theorem~\ref{thm-quasicont}, we know that
Proposition~\ref{prop-outercap} applies 
when $X$ is complete and the measure on $X$ 
is doubling and supports a \p-Poincar\'e inequality.
In Section~\ref{sect-bdyM-Om}
we will apply this result to the $\bCp(\,\cdot\,;\Omm)$ capacity
defined below; this is possible since $\Np(\Omm)=\Np(\Om)$.

\begin{proof}
The fact that the left-hand side is not larger than the right-hand side
follows directly from the monotonicity in Proposition~\ref{prop-Cp}\,\ref{Cp-subset}.

To prove the converse inequality, let $E \subset \overline{\Om}$, 
$0<\eps<1$, and $u \in \At_E$ be such that 
\[
     \|u\|_{\Np(\Om)} \le \bCp(E;\Om)^{1/p}+\eps.
\]
By assumption, $u$ is quasicontinuous in $\Om$. Hence there is an open set 
$V \subset \Om$ with $\Cp(V;\Om)^{1/p} < \eps$ such that $u|_{\Om \setm V}$ is 
continuous. Thus, there is an open set $U \subset \Om$ such that
\[
          U \setm V = \{x \in \Om : u(x) > 1-\eps\} \setm V 
	\supset (E \cap \Om) \setm V.
\]
We can also find $v \ge \chi_V$ with $\|v\|_{\Np(\Om)} < \eps$. Let
\[
          w= \frac{u}{1-\eps} + v.
\]
Then $w \ge 1$ on $(U \setm V) \cup V = U \cup V$,
an open set containing $E \cap \Om$.
Moreover, for each $x \in E \cap \bdy \Om$ there is $r_x>0$ such that
\[
	u > 1-\eps \quad
		\text{in } B(x,r_x) \cap \Om,
\]
and hence $w \ge 1$ in $B(x,r_x) \cap \Om$. Therefore
\[
       W=U \cup V \cup 
                \bigcup_{x \in E \cap \bdy \Om} (B(x,r_x) \cap \overline{\Om}).
\]
is a relatively open subset of $\overline{\Om}$ containing $E$
and $w \in \A_W$. Hence
\begin{align*}
   \bCp(E;\Om)^{1/p} 
          & \le \inf_{\substack{ G \supset E \\ 
	G 		\text{ relatively open in }\overline{\Om}}} 
	 \bCp(G;\Om)^{1/p}
            \le \bCp(W;\Om)^{1/p} 
            \le \|w\|_{\Np(\Om)} \\
          &  \le \frac{1}{1-\eps} \|u\|_{\Np(\Om)} + \|v\|_{\Np(\Om)}
            \le \frac{1}{1-\eps} (\bCp(E;\Om)^{1/p}+\eps) + \eps.
\end{align*}
Letting $\eps \to 0$ completes the proof.
\end{proof}

For the sake of clarity we make the following explicit definition.
We set $\eR:=[-\infty,\infty]$.

\begin{deff} \label{deff-bCp-qcont}
A function $f \in \overline{\Om} \to \eR$ is 
\emph{$\bCp(\,\cdot\,;\Om)$-quasi\-con\-tin\-u\-ous}
if for every $\eps>0$ there is a relatively open set 
$U \subset \overline{\Om}$ such that
$\bCp(U;\Om)<\eps$ and $f|_{\overline{\Om} \setm U}$ 
is real-valued continuous.
\end{deff}

\emph{We assume from now on that $X$ is a complete
metric space supporting a \p-Poincar\'e inequality,
that $\mu$ is doubling, and that\/ $1<p<\infty$.}
It follows that $X$ is quasiconvex,
and in particular connected and locally connected.

\section{The Mazurkiewicz distance \texorpdfstring{$\dM$}{}}
\label{sect-dM}

\emph{In addition to the standing assumptions mentioned at the
end of the previous section, we
assume in this section that $\Om$ is a \emph{bounded domain}, i.e.\ a 
bounded nonempty open connected set.}

\medskip

\begin{deff}
We define the \emph{Mazurkiewicz distance} $\dM$ on $\Om$ by
\[
     \dM(x,y) =\inf_E \diam E,
\]
where the infimum is over all connected sets $E \subset \Om$
containing $x,y \in \Om$. We further define the \emph{inner metric} 
$\din$ on $\Om$ by
\[
     \din(x,y)=\inf_\ga l_\ga,
\]
where the infimum is taken over all curves $\ga:[0,l_\ga] \to \Om$ parameterized 
by arc length and such that $\ga(0)=x$ and $\ga(l_\ga)=y$.
\end{deff}

A consequence of the quasiconvexity of $X$ is that
each pair of points $x,y \in \Om$ can be  connected by a rectifiable curve
in $\Om$ (see Bj\"orn--Bj\"orn~\cite[Lemma~4.38]{BBbook}),
and so $\din(x,y)<\infty$. Hence both  $\din$ and $\dM$ are metrics on $\Om$.

\begin{remark} \label{rmk-M-dist}
The Mazurkiewicz distance was introduced
by Mazurkiewicz~\cite{mazurkiewicz16},
 in relation to a classification of points on $n$-dimensional Euclidean continua.
It  goes under different names in the literature,
and  is e.g.\  denoted $\rho_A$ in \cite{mazurkiewicz16}, 
called relative distance and denoted $\varrho_r$ in
Kuratowski~\cite{kuratowski2}, 
called Mazurkiewicz intrinsic metric and denoted $\delta_D$ 
in Karmazin~\cite{karmazin2008}, and called inner diameter distance 
in Aikawa--Hirata~\cite{AikawaHirata}, 
Freeman--Herron~\cite{FreemanHerron} 
and Herron--Sullivan~\cite{HerronSullivan}. Here and in
Adamowicz--Bj\"orn--Bj\"orn--Shan\-mu\-ga\-lin\-gam~\cite{ABBSprime}
and Bj\"orn--Bj\"orn--Shanmugalingam~\cite{BBStop}
we call it the Mazurkiewicz distance.
\end{remark}

\begin{lem} \label{lem-d-dm-din}
We always have $d \le \dM \le \din$. Furthermore, if\/
$\Om$ is $L$-quasi\-convex, then we also have
\(
   \din \le L d.
\)
\end{lem}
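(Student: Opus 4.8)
The plan is to verify the three inequalities in order, working directly from the definitions of $d$, $\dM$ and $\din$.

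\textbf{Step 1: $d \le \dM$.} Let $x,y \in \Om$ and let $E \subset \Om$ be any connected set containing both $x$ and $y$. Then $d(x,y) \le \diam E$ simply because $x,y \in E$. Taking the infimum over all such $E$ gives $d(x,y) \le \dM(x,y)$.

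\textbf{Step 2: $\dM \le \din$.} Let $x,y \in \Om$ and let $\ga\colon[0,l_\ga]\to\Om$ be any (rectifiable, arc-length parameterized) curve in $\Om$ with $\ga(0)=x$ and $\ga(l_\ga)=y$; such a curve exists by quasiconvexity of $X$, as noted just before the lemma. The image $E := \ga([0,l_\ga])$ is a connected subset of $\Om$ containing $x$ and $y$, and for any two parameter values $s,t$ we have $d(\ga(s),\ga(t)) \le |s-t| \le l_\ga$ (the length bounds the distance between endpoints of any subcurve), so $\diam E \le l_\ga$. Hence $\dM(x,y) \le \diam E \le l_\ga$, and taking the infimum over all such curves $\ga$ yields $\dM(x,y) \le \din(x,y)$.

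\textbf{Step 3: $\din \le L d$ when $\Om$ is $L$-quasiconvex.} This is immediate from the definition of $L$-quasiconvexity: given $x,y\in\Om$, there is a curve in $\Om$ joining them of length at most $L d(x,y)$, so $\din(x,y) \le L d(x,y)$.

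None of these steps presents a real obstacle; the only point requiring a little care is the existence of a rectifiable curve in $\Om$ between any two of its points in Step 2 (so that $\din$, and with it the chain, is not vacuously about an empty infimum), but this has already been recorded in the text via Bj\"orn--Bj\"orn~\cite[Lemma~4.38]{BBbook}. The remaining content is just the observation that a curve's length controls both the diameter of its image and the distance between its endpoints.
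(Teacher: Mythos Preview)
Your proof is correct and follows essentially the same approach as the paper's own proof: both establish $d \le \dM$ directly from the definition, both prove $\dM \le \din$ by taking the image of a curve as the competing connected set and bounding its diameter by the curve's length, and both obtain $\din \le Ld$ immediately from the definition of $L$-quasiconvexity. If anything, you supply slightly more detail than the paper does.
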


Note that even though $X$ is quasiconvex,
we will consider these distances with respect to $\Om$,
which, in general, is not quasiconvex.

\begin{proof}
The first inequality is obvious. As for the second inequality, 
let $\ga:[0,l_\ga] \to \Om$ be a curve, parameterized by
arc length, such that $\ga(0)=x$ and $\ga(l_\ga)=y$.
Then the image $\gahat:=\ga([0,l_\ga])$ is connected
and $\diam \gahat \le l_\ga$. Hence
\[
      d(x,y) \le \dM(x,y)=\inf_E \diam E
        \le \inf_\ga \diam \gahat 
        \le \inf_\ga l_\ga
        = \din(x,y).         
\]
If $\Om$ is $L$-quasiconvex, then $l_\ga\le Ld(x,y)$
for some curve $\ga$ in the infimum, proving the third inequality.
 \end{proof}

\begin{lem} \label{lem-arc-length}
For a curve $\ga:[0,l_\ga] \to \Om$,
arc lengths with respect to $d$, $\dM$ and $\din$ are the same.
\end{lem}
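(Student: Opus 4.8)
The plan is to show that the arc length of a fixed curve $\ga\colon[0,l_\ga]\to\Om$ is unchanged whether we measure it using $d$, $\dM$ or $\din$. Recall that for a metric $\varrho$ on $\Om$ the $\varrho$-length of $\ga$ is
\[
      L_\varrho(\ga)=\sup_{0=t_0<t_1<\dots<t_k=l_\ga} \sum_{j=1}^{k} \varrho(\ga(t_{j-1}),\ga(t_j)),
\]
the supremum taken over all partitions of $[0,l_\ga]$. Since Lemma~\ref{lem-d-dm-din} gives $d\le\dM\le\din$ pointwise on $\Om$, each partition sum for $d$ is at most the corresponding one for $\dM$, which is at most the one for $\din$; taking suprema yields $L_d(\ga)\le L_{\dM}(\ga)\le L_{\din}(\ga)$. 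So it remains only to prove the reverse chain, and for that it suffices to show $L_{\din}(\ga)\le L_d(\ga)$.

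To do this, first I would note that the $d$-length $L_d(\ga)=l_\ga$ since $\ga$ is parameterized by its ($d$-)arc length. Now fix a partition $0=t_0<\dots<t_k=l_\ga$. For each $j$, the restriction $\ga|_{[t_{j-1},t_j]}$ is itself a (nonconstant, compact, rectifiable) curve in $\Om$ joining $\ga(t_{j-1})$ to $\ga(t_j)$, and its $d$-length is exactly $t_j-t_{j-1}$. By the definition of $\din$ as an infimum over connecting curves in $\Om$, we get
\[
      \din(\ga(t_{j-1}),\ga(t_j)) \le t_j - t_{j-1}.
\]
Summing over $j$ gives $\sum_{j=1}^{k}\din(\ga(t_{j-1}),\ga(t_j)) \le l_\ga = L_d(\ga)$. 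Taking the supremum over all partitions yields $L_{\din}(\ga)\le L_d(\ga)$, and combined with the previous paragraph this forces $L_d(\ga)=L_{\dM}(\ga)=L_{\din}(\ga)$, as claimed. (If $L_d(\ga)=\infty$ there is nothing to prove, but in this paper curves are assumed rectifiable, so $l_\ga<\infty$ throughout.)

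The only subtlety — and the main thing to get right rather than a genuine obstacle — is the elementary observation that a subarc of $\ga$ is an admissible competitor in the infimum defining $\din(\ga(t_{j-1}),\ga(t_j))$, and that its $d$-arc length equals $t_j-t_{j-1}$ because $\ga$ is parameterized by $d$-arc length; this is just additivity of arc length over subintervals. One should also make sure the endpoints are distinct, or handle the degenerate case where consecutive partition points map to the same point (then that term contributes $0$ and can be dropped). No completeness, doubling or Poincaré hypotheses are needed here; the argument is purely metric and uses only Lemma~\ref{lem-d-dm-din} and the definition of $\din$.
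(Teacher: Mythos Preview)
Your proof is correct and follows essentially the same strategy as the paper: use the sandwich $d\le\dM\le\din$ from Lemma~\ref{lem-d-dm-din} to get $L_d\le L_{\dM}\le L_{\din}$, and then close the loop by showing $L_{\din}\le L_d$. The paper simply cites this last step as folklore (referring to \cite[Lemma~4.43]{BBbook}), whereas you supply the elementary argument directly, namely that each subarc of $\ga$ is an admissible competitor in the infimum defining $\din$.
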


\begin{proof}
That arc lengths are the same with respect to $d$ and $\din$ is folklore,
for a proof see Bj\"orn--Bj\"orn~\cite[Lemma~4.43]{BBbook}.
It then follows from Lemma~\ref{lem-d-dm-din} that
arc length is also the same with respect to $\dM$.
\end{proof}

In the rest of the paper it will be important for us to
work with both the given metric $d$ and 
the Mazurkiewicz distance $\dM$. 
The inner metric $\din$ will however not be used in the rest
of the paper, apart from in some examples
in Section~\ref{sect-examples}.

In this paper, by $\clOmm$ we mean the completion of the metric space
$\Omm:=(\Om,\dM)$, where the Mazurkiewicz
distance $\dM$ comes from $\Om$.
On $\clOmm$, $\dM$ always refers to the metric
in $\clOmm$ inherited from the metric $\dM$ on $\Om$.
The focus of this paper is to use the Perron method  
to study solutions of Dirichlet problems with various boundary data
with respect to the Mazurkiewicz boundary. For this method to work 
it will be vitally important that $\clOmm$ is compact. 
It turns out that the compactness of $\clOmm$ 
has a very geometric characterization.
We state it next, before defining the concepts involved.

\begin{thm} \label{thm-clOmm-cpt}
The  closure\/ $\clOmm$ is compact if and only if\/
$\Om$ is finitely connected at the boundary.
\end{thm}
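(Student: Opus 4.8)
The plan is to characterize compactness of $\clOmm$ via sequential compactness and to translate the geometric condition ``finitely connected at the boundary'' (which, although the excerpt defers the precise definition, I take to mean: for every $x \in \bdy\Om$ and every neighborhood $U$ of $x$, there is a smaller neighborhood $V \subset U$ of $x$ such that $V \cap \Om$ has only finitely many connected components whose closures contain $x$) into a statement about totally boundedness of $(\Om, \dM)$. Since $\clOmm$ is complete by construction, compactness is equivalent to $(\Om,\dM)$ being totally bounded, so the whole theorem reduces to: \emph{$(\Om,\dM)$ is totally bounded if and only if $\Om$ is finitely connected at the boundary}.

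First, for the easy direction, I would prove that if $\Om$ is \emph{not} finitely connected at the boundary, then $\clOmm$ is not compact. If the finite-connectivity condition fails at some $x \in \bdy\Om$, then there is a neighborhood $U$ of $x$ such that every smaller neighborhood $V$ meets $\Om$ in infinitely many components clustering at $x$. From this one extracts a sequence of points $y_j \in \Om$, each lying in a distinct such component, with $d(y_j,x) \to 0$ but with the property that any connected set in $\Om$ joining $y_i$ to $y_j$ (for $i \ne j$) must exit the fixed neighborhood $U$ and hence have diameter bounded below by some $\delta > 0$ depending only on $U$; thus $\dM(y_i,y_j) \ge \delta$ for all $i \ne j$. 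Such a $\delta$-separated sequence prevents total boundedness of $(\Om,\dM)$, so $\clOmm$ cannot be compact. The locally-connected hypothesis on $X$ (guaranteed by quasiconvexity, as noted after Definition~\ref{deff-bCp-qcont}) is what makes the ``components clustering at $x$'' picture available.

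Next, the harder converse: assuming $\Om$ is finitely connected at the boundary, show $(\Om,\dM)$ is totally bounded. Fix $\eps > 0$. Because $\clOm$ is compact in the metric $d$ (as $X$ is proper and $\Om$ is bounded), cover $\clOm$ by finitely many $d$-balls $B(z_k, r_k)$ of small radius such that: at each boundary point the finite-connectivity condition lets us choose the radius so that $B(z_k,r_k) \cap \Om$ has only finitely many components whose closure reaches near $z_k$, and each such component has $\dM$-diameter at most, say, $C r_k$; at each interior point we simply take the ball small enough to lie in $\Om$, where $d$ and $\dM$ are comparable on connected pieces. A compactness/Lebesgue-number argument then reduces us to finitely many such balls, each contributing finitely many ``$\dM$-small'' pieces, and covering $\Om$. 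The union of all these finitely many pieces is a finite $\dM$-cover of $\Om$ by sets of diameter $O(\eps)$, giving total boundedness. The main obstacle will be the bookkeeping in this converse direction: controlling the $\dM$-diameter of the relevant components of $B(z_k,r_k)\cap\Om$ uniformly --- one must ensure that each such component, while possibly $d$-small, does not have large $\dM$-diameter because of a thin neck forcing long detours --- and then gluing the local finite covers into a single global finite cover without the count blowing up. I expect this is exactly where one needs the full strength of ``finitely connected at the boundary'' at \emph{every} boundary point simultaneously, combined with compactness of $\clOm$ in $d$ to pass from a local to a global statement. (This is the argument of Karmazin~\cite{karmazin2008}; the alternative self-contained route of Bj\"orn--Bj\"orn--Shanmugalingam~\cite{BBStop} organizes the same estimates slightly differently.)
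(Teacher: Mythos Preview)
The paper does not actually prove Theorem~\ref{thm-clOmm-cpt}; it merely cites Karmazin~\cite{karmazin2008} and Bj\"orn--Bj\"orn--Shanmugalingam~\cite{BBStop} for the proof. Your overall strategy---use that $\clOmm$ is complete by construction and reduce to total boundedness of $(\Om,\dM)$---is the natural one and is essentially what those references do.

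Two substantive points, however. First, your guessed definition of ``finitely connected at $x_0$'' is \emph{not} the paper's (which appears immediately after the theorem): the paper requires that $G\cap\Om$ have only finitely many components \emph{in total}, not merely finitely many whose closures contain $x_0$. Your weaker version fails already for the topologist's comb (Example~\ref{ex-comb-wide-thin}): at a point $x_0=(0,y)$ with $0<y<1$, the small-ball components are the individual teeth, none of which has $x_0$ in its closure, so your condition is vacuously satisfied---yet $\clOmm$ is not compact there. With your definition the theorem is simply false, so you must use the paper's.

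Second, what you call the ``main obstacle'' in the converse direction---that a component of $B(z_k,r_k)\cap\Om$ might be $d$-small but $\dM$-large because of ``thin necks''---is not an obstacle at all. If $C\subset\Om$ is connected and $x,y\in C$, then $C$ itself is admissible in the infimum defining $\dM(x,y)$, so $\dM(x,y)\le\diam_d C$; together with $d\le\dM$ this gives $\diam_{\dM}C=\diam_d C$. Hence each component of $G_x\cap\Om$ automatically has $\dM$-diameter at most $\diam_d G_x$. Once you see this, the converse is clean: for each $x\in\bdy\Om$ choose (by finite connectivity) an open $G_x\subset B(x,\eps/2)$ with $G_x\cap\Om$ having finitely many components; cover the compact set $\bdy\Om$ by finitely many $G_{x_i}$; the remainder $\overline{\Om}\setminus\bigcup_i G_{x_i}$ is compact in $\Om$ and is covered by finitely many connected open sets of small $d$-diameter (available by local connectedness of $X$). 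All pieces are connected subsets of $\Om$ of $d$-diameter at most $\eps$, hence of $\dM$-diameter at most $\eps$, and there are finitely many of them.
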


This theorem holds whenever $X$ is proper and locally connected. 
For a proof see Karmazin~\cite[Theorem~1.3.8]{karmazin2008}
(in Russian) or Bj\"orn--Bj\"orn--Shan\-mu\-ga\-lin\-gam~\cite{BBStop}.

\begin{deff}
We say that $\Om$ is \emph{finitely connected} at $x_0 \in \bdy \Om$ if
for every $r>0$ there is an open (in $X$) set $G$  
such that $x_0 \in G \subset B(x_0,r)$
and $G \cap \Om$ has only finitely many components.

If there is $N>0$ such that for every $r>0$
there is an open (in $X$) set $G$
such that $x_0 \in G \subset B(x_0,r)$
and such that $G \cap \Om$ has at most $N$ components,
then we say that $\Om$ is \emph{boundedly connected} at $x_0$.
If moreover $N$ is minimal, we say that 
$\Om$ is 
\emph{$N$-connected} at $x_0$. We say that
$\Om$ is \emph{locally connected} at $x_0 \in \bdy \Om$
if it is $1$-connected at $x_0$.

We say that $\Om$ has one of the above properties
\emph{at the boundary} if it has that property  at each boundary point.
\end{deff}

The terminology above follows N\"akki~\cite{nakki70}.
(N\"akki~\cite{nakki-private} has informed us that
he learned about the terminology from V\"ais\"al\"a, who
however first seems to have used it in print in~\cite{vaisala}.) 
For planar domains, the concept of finite connectedness at the boundary was 
used by Newman~\cite{newman} (only in the first edition of his book).
Beware that the notion of finitely connected domains is a 
completely different notion; a domain is finitely connected if its fundamental
group is finitely generated.
Observe also that the balls in the definition above are taken 
with respect to the given metric $d$.

If $\Om$ is finitely connected at the boundary, then
the map $\Phi : \clOmm \to \overline{\Om}$ defined below
sheds more light on the relation between the Mazurkiewicz
boundary $\bdym \Om$ and the metric boundary $\bdy \Om$.
On $\Omm$ the map $\Phi$ is the natural map given by
$\Phi(x)=x$ when $x\in\Omm$. This map is a $1$-Lipschitz
map on $\Omm$ (since $d \le \dM$), and hence 
has a unique continuous extension to $\clOmm$,  
which we again denote by $\Phi$; this extension is also $1$-Lipschitz.
If $x_0 \in \bdy \Om$ and $\Om$ is $N$-connected at $x_0$, then 
$\Phi^{-1}(x_0)$ consists of exactly $N$ points,
while if $\Om$ is finitely but not boundedly connected
at $x_0 \in \bdy \Om$, then $\Phi^{-1}(x_0)$ is an infinite countable set.
For a more detailed description,
see Bj\"orn--Bj\"orn--Shan\-mu\-ga\-lin\-gam~\cite{BBStop}.

\section{The Sobolev spaces \texorpdfstring{$\Np(\Omm)$}{}, 
\texorpdfstring{$\Np(\clOmm)$}{}}
\label{sect-NpOmm}

\emph{In addition to the standing assumptions described at the end of
Section~\ref{sect-newcap}, we assume in this section that\/ $\Om$ is a bounded domain
which is finitely connected at the boundary.}

\medskip

There are now two different metrics on $\Om$ of interest
here: the given metric $d$ and the Mazurkiewicz distance $\dM$. 
To make the distinction clear we denote the metric
space $(\Om,d)$ by $\Om$, and the metric space $(\Om,\dM)$ by
$\Omm$. We equip both of them with the measure $\mu$ 
(or strictly speaking, the restriction $\mu|_\Om$).
As sets, $\Om=\Omm$, and it is only the metrics that are different.
It is when taking closures that we really have to distinguish between
$\Om$ and $\Omm$. Note that $\Om \subset X$ and $\Omm \subset \clOmm$.
The closure $\overline{\Om}$ of $\Om$ in $X$ is compact because $X$ is proper. 
The closure of $\Omm$ is the completion $\clOmm$ as introduced in 
Section~\ref{sect-dM}, which is compact by Theorem~\ref{thm-clOmm-cpt}.

We also equip $\overline{\Om}$ and $\clOmm$ with measures as follows.
For $\overline{\Om}$ the natural choice is to equip it with $\mu|_{\overline{\Om}}$.
However, it is sometimes preferable to equip $\overline{\Om}$
with $\mu_0:=\mu|_{\Om}$ so that $\mu_0(\bdy \Om)=0$
(strictly speaking we let $\mu_0(E)=\mu(E \cap \Om)$
for $E \subset \overline{\Om}$). For $\clOmm$ we have no natural measure on 
$\bdy \Om$ and we always equip it with (the zero extension of) $\mu|_\Om$ 
so that $\mu(\bdym \Om)=0$. Since $\Om$ is an open subset of 
$\clOmm$, this zero extension is a Borel measure.

The two metrics $d$ and $\dM$ are locally equivalent in $\Om$, and give 
the same (not only equivalent) arc
lengths of curves. This is important from the point of view of 
upper gradients and Newtonian spaces, see Lemma~\ref{lem-arc-length}.

Let $f: \Om \to \eR$ be an arbitrary (extended real-valued)
function on $\Om$, and let $g:\Om \to [0,\infty]$.
As arc lengths for curves in $\Om$ are the same with respect to 
$d$ and $\dM$, $g$ will be an upper gradient of
$f$ with respect to $\Om$ if and only if it is an 
upper gradient with respect to $\Omm$. Since
we equip both metric spaces with the same measure
we see that $\Np(\Om)=\Np(\Omm)$ and
$\Nploc(\Om)=\Nploc(\Omm)$ (with the same (semi)norms).
It also follows that $g$ is a \p-weak upper gradient of
$f$ with respect to $\Om$ if and only if it is a 
\p-weak upper gradient with respect to $\Omm$.
See also Proposition~\ref{prop-Np0} where similar results are
obtained for Newtonian functions with zero boundary values. 

The following two lemmas relate Newtonian spaces and capacities
with respect to $\Om$, $\clOm$ and $\clOmm$.
Recall that $\Phi$ is the 1-Lipschitz extension to $\clOmm$ of
the identity map on $\Omm$, see the end of Section~\ref{sect-dM}.

\begin{lem} \label{lem-fPhi}
If $f \in \Np(\overline{\Om};\mu_0)$, then $\ft:=f \circ \Phi \in \Np(\clOmm)$.
Moreover,
\[ 
     \|\ft\|_{\Np(\clOmm)}= \|f\|_{\Np(\Om)}
          = \|f\|_{\Np(\overline{\Om};\mu_0)}.
\] 
\end{lem}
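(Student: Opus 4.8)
The plan is to transfer the Newtonian-space structure from $\overline{\Om}$ (with the measure $\mu_0$) to $\clOmm$ through the map $\Phi$, using the fact that $\Phi$ is $1$-Lipschitz and that it restricts to the identity on $\Om = \Omm$, an open set that is full-measure in both $\overline{\Om}$ and $\clOmm$. First I would observe that since $\mu_0(\bdy \Om) = 0$ and $\mu(\bdym \Om) = 0$, the $L^p$-norms are immediately equal:
\[
\int_{\clOmm} |\ft|^p \, d\mu = \int_{\Omm} |f \circ \Phi|^p \, d\mu = \int_\Om |f|^p \, d\mu = \int_{\overline{\Om}} |f|^p \, d\mu_0,
\]
because $\ft = f \circ \Phi = f$ on $\Omm$ and the boundaries are null. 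So the whole content is in comparing the upper gradient energies, i.e.\ showing that if $g$ is an upper gradient of $f$ on $\overline{\Om}$ (with respect to $d$), then $g \circ \Phi$ is an upper gradient of $\ft$ on $\clOmm$ (with respect to $\dM$), and conversely that minimal $p$-weak upper gradients match so the infima over upper gradients coincide.

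The key step is the upper gradient inequality along curves in $\clOmm$. Let $\gat : [0,l] \to \clOmm$ be a (nonconstant, compact, rectifiable) curve parameterized by arc length, with endpoints $\gat(0), \gat(l)$. Composing with $\Phi$ gives a curve $\Phi \circ \gat$ in $\overline{\Om}$; since $\Phi$ is $1$-Lipschitz, $\Phi \circ \gat$ is rectifiable with length at most $l$, and in fact (by Lemma~\ref{lem-arc-length}, once we know curves essentially live in $\Om$) the arc length is preserved. The delicate point is that a priori $\gat$ could spend time on the boundary set $\bdym \Om$, where $\ft$ is only defined as $f \circ \Phi$ and where the notion of "upper gradient of $f$ on $\overline{\Om}$" controls things only through curves in $\overline{\Om}$. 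So the main technical content is: a rectifiable curve in $\clOmm$ meets $\bdym \Om$ in a set of arc-length measure zero (indeed, its image lies in $\Omm$ except possibly at finitely many or measure-zero parameter values), so that $\int_{\gat} (g \circ \Phi)\, ds = \int_{\Phi \circ \gat} g \, ds$, and then the upper gradient inequality \eqref{ug-cond} for $g$ along $\Phi \circ \gat$ in $\overline{\Om}$ transfers verbatim to $g \circ \Phi$ along $\gat$. I would prove the measure-zero claim using properness of $X$ together with the structure of $\clOmm$ described at the end of Section~\ref{sect-dM} (where $\Phi^{-1}(x_0)$ is finite or countable for each $x_0 \in \bdy \Om$, and $\bdym \Om$ is a "thin" set from the point of view of rectifiable curves — a rectifiable curve cannot have positive-length image in a totally disconnected-over-the-boundary fibre set). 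Alternatively, and perhaps more cleanly, one notes that any subarc of $\gat$ with image avoiding $\bdym\Om$ lifts to a curve in $\Om$ on which $d$ and $\dM$ arc lengths agree, and one handles the at-most-countably-many boundary hits by a limiting/approximation argument. For the converse direction, given an upper gradient $g$ of $\ft$ on $\clOmm$, its restriction to $\Om$ is an upper gradient of $f|_\Om$ with respect to $\dM$, hence with respect to $d$ (same arc lengths), hence — since $\Np(\Om) = \Np(\overline{\Om};\mu_0)$ by the remark in Section~\ref{sect-prelim} that $\Np_0$ is insensitive to the ambient set and by $\mu_0(\bdy\Om)=0$ — extends to an upper gradient of $f$ on $\overline{\Om}$ with the same energy; thus the two infima agree and $\|\ft\|_{\Np(\clOmm)} = \|f\|_{\Np(\overline{\Om};\mu_0)}$.

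The main obstacle I anticipate is precisely the claim that rectifiable curves in $\clOmm$ do not linger on the Mazurkiewicz boundary, i.e.\ that the boundary $\bdym \Om$ is negligible for the curve integrals. This is not purely formal because $\clOmm$ can be a genuinely wild completion; one must use the geometric description of $\clOmm$ from \cite{BBStop} (finite connectedness at the boundary, the fibres of $\Phi$ being finite or countable, and local connectedness) to rule out a curve having positive length inside $\bdym\Om$. Everything else — the $L^p$ identity, the transfer of \eqref{ug-cond}, and matching the infima via $\Np(\Om) = \Np(\Omm) = \Np(\overline{\Om};\mu_0)$ — is routine bookkeeping once that point is settled.
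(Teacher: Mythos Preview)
Your central technical claim---that a rectifiable curve in $\clOmm$ meets $\bdym\Om$ in a set of arc-length measure zero---is false. In the slit disc $B((0,0),1)\setm([0,1]\times\{0\})$, the map $t\mapsto (t,0^+)$ for $t\in[1/4,3/4]$ (the upper copy of the slit in $\clOmm$) is a rectifiable curve of length $1/2$ lying entirely in $\bdym\Om$; indeed $\dM$ between two such points equals their Euclidean distance along the slit. Your alternative route (``at-most-countably-many boundary hits'') fails for the same reason. So the argument as written has a genuine gap: you are trying to prove the equality $\int_{\Phi\circ\gat} g\,ds = \int_{\gat} (g\circ\Phi)\,ds$, and your mechanism for this does not work.

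The paper avoids this entirely by observing that only the \emph{inequality} $\int_{\Phi\circ\gat} g\,ds \le \int_{\gat} (g\circ\Phi)\,ds$ is needed, and that this follows immediately from the $1$-Lipschitz property of $\Phi$: with $\ga=\Phi\circ\gat$ one has $\Lip\ga(t)\le 1$, hence
\[
\int_\ga g\,ds = \int_0^{l_\gat} g(\ga(t))\,\Lip\ga(t)\,dt \le \int_0^{l_\gat} (g\circ\Phi)(\gat(t))\,dt = \int_{\gat}(g\circ\Phi)\,ds.
\]
No analysis of where the curve sits is required. For the norm equality, rather than extending upper gradients from $\Om$ back to $\overline{\Om}$ (which you have not justified), the paper uses the known fact that the minimal \p-weak upper gradient on $\overline{\Om}$ (resp.\ on $\clOmm$) restricts to the minimal \p-weak upper gradient on $\Om$; since both boundaries are $\mu$-null, this forces the $\Np$-norms to agree.
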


\begin{proof}
Let $g \in L^p(\overline{\Om})$ 
be an upper gradient of $f$ on $\overline{\Om}$
and let $\gt:=g \circ \Phi$. We shall show that $\gt$
is an upper gradient of $\ft$ on $\clOmm$.
To this end, let $\gat :[0,l_\gat] \to \clOmm$ be a curve,
parameterized by arc length, and $\ga = \Phi \circ \gat$.
As a composition of two $1$-Lipschitz functions, $\ga$ is $1$-Lipschitz
and it follows that it is a rectifiable curve, though it need not be
parameterized by arc length. Furthermore,
\[
     \Lip \ga(t):= \limsup_{h \to 0} 
         \frac{d(\ga(t+h),\ga(t))}{|h|}   \le 1.
\]
Hence
\begin{align*}
   |\ft(\gat(0))-\ft(\gat(l_\gat))|
	& =    |f(\ga(0))-f(\ga(l_\gat))| 
	 \le \int_{\ga} g \, ds \\
	& = \int_0^{l_\gat} g \circ \ga (t) \Lip \ga(t)\, dt 
	 \le \int_0^{l_\gat} \gt \circ \gat (t)  \, dt 
	 = \int_{\gat} \gt \, ds,
\end{align*}
and so $\gt \in L^p(\clOmm)$ is an upper gradient of $\ft$.
Hence $\ft \in \Np(\clOmm)$,  since 
\[ 
	\|\ft\|_{L^p(\clOmm)}
	= \|f\|_{L^p(\Om)}
	= \|f\|_{L^p(\overline{\Om};\mu_0)}.
\] 
Here  we have used the fact that $\mu(\bdym \Om)=\mu_0(\bdy \Om)=0$.

The restriction to $\Om$ of the minimal \p-weak upper gradient $g_f$ of
$f$ on $\overline{\Om}$ is minimal also as a \p-weak upper gradient on 
$\Om$, i.e.\ if we denote the minimal \p-weak upper gradient
of $f|_\Om$ by $g_{f|_\Om}$, then $g_f|_{\Om}=g_{f|_\Om}$ in $\Om$.
This follows from Shanmugalingam~\cite[Lemma~3.2]{Sh-harm}, see also
Bj\"orn--Bj\"orn~\cite[Lemma~2.23]{BBbook}. 
Similarly $g_{\ft}|_{\Om}=g_{\ft|_\Om}=g_{f|_\Om}$ in $\Om$. Hence 
\[ 
	\|g_{\ft}\|_{L^p(\clOmm)}
	= \|g_{f|_{\Om}}\|_{L^p(\Om)}
	= \|g_{f}\|_{L^p(\overline{\Om};\mu_0)}.
	\qedhere
\] 
\end{proof}

\begin{lem} \label{lem-Phicap-Omm}
Let $E \subset \overline{\Om}$. Then 
\[ 
	\bCp(\Phi^{-1}(E);\Omm)
	\le \bCp(E;\Om)
	\le \Cp(E).
\]
\end{lem}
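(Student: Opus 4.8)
The plan is to prove the two inequalities separately, using Lemma~\ref{lem-fPhi} as the main tool for the first one and a direct comparison of admissible classes for the second.

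\medskip

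\textbf{Proof of $\bCp(E;\Om) \le \Cp(E)$.}
First I would unwind the definitions. Recall $\Cp(E) = \Cp(E;X)$, so an admissible function for $\Cp(E)$ is some $u \in \Np(X)$ with $u = 1$ on $E$. Given such a $u$, I claim $v := u|_\Om$ is admissible for $\bCp(E;\Om)$, i.e.\ $v \in \A_E$. Indeed $v \in \Np(\Om)$ with $\|v\|_{\Np(\Om)} \le \|u\|_{\Np(X)}$ (the integrals defining the norm are over the smaller set $\Om$, and any upper gradient of $u$ on $X$ restricts to one of $v$ on $\Om$). Moreover $v = u = 1 \ge 1$ on $E \cap \Om$, and for $x \in E \cap \bdy\Om$ we have, since $u$ is continuous at $x$ (here I would invoke that under the standing assumptions $X$ is proper and supports a Poincaré inequality, so $u \in \Np(X)$ has a quasicontinuous — in fact, for the liminf we only need a mild control; but cleanest is: $u$ equals $1$ q.e.\ on a neighbourhood intersected with $E$... )

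Actually the clean argument avoids continuity: we may instead take, for each $\eps > 0$, an admissible $u$ for $\Cp(E)$ that is \emph{additionally} $1$ on a neighbourhood of $E$ — but the paper explicitly does not require this. The genuinely clean route is: by the outer capacity property of $\Cp$ (Theorem~\ref{thm-quasicont}\ref{Cp-outer}), for each $\eps>0$ pick an open $G \supset E$ with $\Cp(G) < \Cp(E) + \eps$, and then pick $u \in \Np(X)$ with $u = 1$ on $G$ and $\|u\|_{\Np(X)}^p < \Cp(G) + \eps$. Now $v = u|_\Om$ satisfies $v = 1$ on $G \cap \Om \supset E \cap \Om$, and for $x \in E \cap \bdy\Om$, since $G$ is an open neighbourhood of $x$ we get $u = 1$ on $B(x,r_x) \subset G$ for some $r_x > 0$, hence $\liminf_{\Om \ni y \to x} v(y) = 1$. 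Thus $v \in \A_E$ and $\bCp(E;\Om) \le \|v\|_{\Np(\Om)}^p \le \|u\|_{\Np(X)}^p < \Cp(E) + 2\eps$. Let $\eps \to 0$.

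\medskip

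\textbf{Proof of $\bCp(\Phi^{-1}(E);\Omm) \le \bCp(E;\Om)$.}
Here I would start from an admissible function for $\bCp(E;\Om)$ and push it forward by $\Phi$. Let $\eps > 0$ and take $u \in \At_E$ (so $0 \le u \le 1$, $u \ge 1$ on $E \cap \Om$, and $\liminf_{\Om \ni y \to x} u(y) \ge 1$ for $x \in E \cap \bdy\Om$) with $\|u\|_{\Np(\Om)}^p < \bCp(E;\Om) + \eps$. Set $\ut := u \circ \Phi$ on $\clOmm$. Since $u$ is not assumed to extend to $\overline\Om$, I cannot directly quote Lemma~\ref{lem-fPhi}, but its proof technique applies on $\Omm$: on $\Omm$ itself $\Phi$ is the identity, so $\ut|_{\Omm} = u$, and since $\Np(\Omm) = \Np(\Om)$ (shown in Section~\ref{sect-NpOmm}, as arc lengths and the measure agree), we get $\ut \in \Np(\Omm)$ with $\|\ut\|_{\Np(\Omm)} = \|u\|_{\Np(\Om)}$ (recall $\mu(\bdym\Om) = 0$). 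It remains to check $\ut \in \A_{\Phi^{-1}(E)}$ as a function on $\clOmm$. For the interior part: $\Phi^{-1}(E) \cap \Omm = E \cap \Om$ (since $\Phi = \mathrm{id}$ on $\Omm$ and $\Om = \Omm$ as sets), and there $\ut = u \ge 1$. For the boundary part: let $\xi \in \Phi^{-1}(E) \cap \bdym\Om$, so $x := \Phi(\xi) \in E$; if $x \in \Om$ then $\ut \equiv 1$... more carefully, if $x \in E \cap \bdy\Om$, take $\Omm \ni \eta \to \xi$ in $\dM$; then $\Phi(\eta) \to \Phi(\xi) = x$ in $d$ since $\Phi$ is $1$-Lipschitz, and $\Phi(\eta) \in \Om$, so $\liminf_{\Omm \ni \eta \to \xi} \ut(\eta) = \liminf u(\Phi(\eta)) \ge \liminf_{\Om \ni y \to x} u(y) \ge 1$. (The case $x \in E \cap \Om$: then $u \ge 1$ on a $d$-neighbourhood of $x$ in $\Om$? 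No — $u \ge 1$ only \emph{at} such points of $E$. But one should note $\Phi^{-1}(E\cap\Om) \subset \Omm$, so such $\xi$ are interior, already handled.) Hence $\ut \in \A_{\Phi^{-1}(E)}$ and $\bCp(\Phi^{-1}(E);\Omm) \le \|\ut\|_{\Np(\Omm)}^p = \|u\|_{\Np(\Om)}^p < \bCp(E;\Om) + \eps$. Let $\eps \to 0$.

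\medskip

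\textbf{Main obstacle.} The routine-looking but genuinely careful point is the second inequality's boundary condition: one must keep straight that $\bdym\Om$ and $\bdy\Om$ are different sets related only through the (non-injective, merely $1$-Lipschitz) map $\Phi$, and that the $\dM$-convergence $\eta \to \xi$ in $\clOmm$ is \emph{stronger} than the $d$-convergence $\Phi(\eta) \to \Phi(\xi)$ in $\overline\Om$ — which is exactly what makes the liminf inequality go the right way (a weaker notion of convergence in the target would be fatal). The first inequality's only subtlety is remembering to use the outer-capacity property of $\Cp$ so that admissible functions can be taken locally constant near $E \cap \bdy\Om$; without it, an arbitrary $\Cp$-admissible $u$ need not satisfy the liminf condition pointwise on $\bdy\Om$.
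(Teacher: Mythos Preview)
Your proof is correct and follows essentially the same route as the paper's. For $\bCp(E;\Om)\le\Cp(E)$ you (after the false start) invoke the outer-capacity property of $\Cp$ to get an open $G\supset E$ and an admissible $u\ge 1$ on $G$, exactly as the paper does; for $\bCp(\Phi^{-1}(E);\Omm)\le\bCp(E;\Om)$ you take $u\in\A_E$, note that $u\circ\Phi=u$ on $\Omm$ so $\Np(\Omm)=\Np(\Om)$ gives the norm equality, and use that $\dM$-convergence implies $d$-convergence (via $\Phi$ being $1$-Lipschitz) to carry the $\liminf$ condition across --- again matching the paper. Your aside that $\Phi^{-1}(E\cap\Om)\subset\Omm$ is correct and dispatches the case split; in fact $\Phi(\bdym\Om)\subset\bdy\Om$, so the case $x\in\Om$ never arises for $\xi\in\bdym\Om$.
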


The first inequality is actually an equality, see Proposition~\ref{prop-Phicap-Omm-equal}.
Further comparisons of these capacities will be given in the appendix. 

\begin{proof}
For the first inequality, take $u \in \A_E$ with respect to $\Np(\Om)$ and let 
$\ut=u \circ \Phi  : \Omm \to \eR$.  Let $x \in \Phi^{-1}(E)\cap\bdym\Om$ and take a 
sequence $y_j \in \Omm$ such that $y_j \dMto x$. 
Since $d \le \dM$, $y_j \to \Phi(x)$ in the given metric $d$.
Hence, because $\Phi(x) \in E\cap\bdry\Om$,
\[ 
     \liminf_{j \to \infty} u(y_j) \ge 1.
\]
Thus $\ut$ is admissible in the definition of $\bCp(\Phi^{-1}(E);\Omm)$.
Taking infimum over all such $u$,  together with the fact that $\Np(\Om)=\Np(\Omm)$ 
(with the same norms), shows the first inequality.

We now turn to the second inequality, 
which is obvious if $\Cp(E)=\infty$. Thus we may assume that $\Cp(E)<\infty$.
Let $\eps>0$. Since $\Cp$ is an outer capacity,
by Theorem~\ref{thm-quasicont}\,\ref{Cp-outer}
we can find an open set $G \supset E$ such that $\Cp(G) < \Cp(E) +\eps$.
Therefore we can find $u \in \Np(X)$ such that $u \ge 1$ in $G$
and $\|u\|_{\Np(X)}^p < \Cp(E) +\eps$. Since $G$ is open, we have 
$u \in \A_{E}$. Hence
\[
    \bCp(E;\Om) \le \|u\|_{\Np(\Om)}^p \le \Cp(E) +\eps.
\]
Letting $\eps \to 0$ concludes the proof.
\end{proof}

To discuss solutions of Dirichlet problems, we need to
compare boundary values of Newtonian functions. 
Two Newtonian functions have the same boundary values if their difference belongs 
to  $\Np_0(\Om):=\Np_0(\Om;X)=\Np_0(\Om;\overline{\Om})$,
where the last equality was pointed out in Section~\ref{sect-prelim}.
Similarly, we can define $\Np_0(\Omm):=\Np_0(\Omm;\clOmm)$
and $\Np_0(\Om;\mu_0):=\Np_0(\Om; (\overline{\Om},\mu_0))$. 
The capacity $\bCp(\,\cdot\,;\Omm)$, and the related quasicontinuity, 
are defined in a manner similar to $\bCp(\,\cdot\,;\Om)$, and
all the results in Section~\ref{sect-newcap} have direct counterparts
for $\bCp(\,\cdot\,;\Omm)$.

The following is the main result of this section. Note that in general,  as sets,
\[
   \Np(\clOm) \subsetneq \Np(\clOm;\mu_0) 
          \ne \Np(\clOmm),
\]
and so it is the requirement that $u=0$ outside $\Om$ which gives the 
following equality.

\begin{prop} \label{prop-Np0}
We have $\Np_0(\Om)=\Np_0(\Om;\mu_0) = \Np_0(\Omm)$. 

Furthermore, if $f$ is a function in this class, then $g$ is 
 a\/ \textup{(}\p-weak\/\textup{)} upper gradient of the 
 zero-extension of $f$ with respect to\/ $\overline{\Om}$
 if and only if $g\circ \Phi$ is a\/ \textup{(}\p-weak\/\textup{)} 
 upper gradient of $f$ with respect to\/ $\clOmm$. Here\/ 
 $\Phi$ is the extension of the canonical identity map\/ 
 $\Om\to\Omm$ to their respective closures.
\end{prop}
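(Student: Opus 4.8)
The plan is to prove the two equalities $\Np_0(\Om)=\Np_0(\Om;\mu_0)$ and $\Np_0(\Om;\mu_0)=\Np_0(\Omm)$ separately, since they have rather different flavours. For the first equality, recall that $\Np_0(\Om)=\Np_0(\Om;\overline{\Om})$ was already noted in Section~\ref{sect-prelim}, so both spaces consist of restrictions to $\Om$ of functions in $\Np(\overline{\Om})$ (respectively $\Np(\overline{\Om};\mu_0)$) vanishing on $\bdy\Om$. The only difference is the measure used to integrate $|f|^p+g_f^p$ over $\overline{\Om}$. But such a zero-extended function $f$ and its minimal \p-weak upper gradient $g_f$ both vanish q.e.\ on $\bdy\Om$ (for $f$ this is the defining property up to the q.e.\ reformulation noted in Section~\ref{sect-prelim}; for $g_f$ one uses that $g_f=g_{\min\{f,c\}}=g_f\chi_{\{f<c\}}$-type identities force $g_f=0$ a.e.\ on $\{f=0\}$, and $\mu(\bdy\Om)$ need not vanish but $f=0$ a.e.\ there). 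Hence the integrals over $\overline{\Om}$ and over $\Om$ agree, the norms coincide, and the two spaces are literally equal as sets of functions on $\Om$.

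For the second equality I would exploit the map $\Phi$ and argue that $f\mapsto \ft:=f\circ\Phi$, which is essentially the identity on $\Om$ since $\Phi|_\Om=\mathrm{id}$, gives the desired correspondence. Concretely: given $f\in\Np_0(\Om;\mu_0)$, extend it by zero to $\overline{\Om}$; then $\Phi$ pulls this back to a function on $\clOmm$ that vanishes on $\bdym\Om$ and equals $f$ on $\Om=\Omm$. Lemma~\ref{lem-fPhi} already shows $\ft\in\Np(\clOmm)$ with equal norms when $f\in\Np(\overline{\Om};\mu_0)$, and the same curve-by-curve computation in the proof of that lemma shows that $g\circ\Phi$ is an upper gradient of $\ft$ with respect to $\clOmm$ whenever $g$ is an upper gradient of the zero-extension of $f$ with respect to $\overline{\Om}$; this is exactly the ``only if'' direction of the second assertion. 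Conversely, given $u\in\Np_0(\Omm)=\Np_0(\Omm;\clOmm)$, i.e.\ $u\in\Np(\clOmm)$ with $u=0$ on $\bdym\Om$, restrict to $\Omm=\Om$ and extend by zero to $\overline{\Om}$; one must check this lands in $\Np(\overline{\Om};\mu_0)$, which again reduces to producing an upper gradient on $\overline{\Om}$ from one on $\clOmm$.

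The main obstacle---and the one genuinely new point compared to Lemma~\ref{lem-fPhi}---is this converse direction: pushing an upper gradient \emph{forward} through $\Phi$ (or rather, producing an upper gradient on $\overline{\Om}$ out of one on $\clOmm$), since $\Phi$ is not injective on the boundary and a curve in $\overline{\Om}$ hitting $\bdy\Om$ need not lift to a curve in $\clOmm$. The resolution is that we only need curves relevant to the zero-extended function: any curve $\ga$ in $\overline{\Om}$ on which the inequality \eqref{ug-cond} could fail for the zero-extension either lies in the open set $\Om$ (where $d$ and $\dM$ give the same arc length and the same upper gradients, by Lemmas~\ref{lem-d-dm-din} and~\ref{lem-arc-length}, so the given upper gradient already works), or it meets $\bdy\Om$, in which case the zero-extension vanishes at the relevant endpoint(s) and---by splitting $\ga$ at its first and last exit times from $\Om$ and using that the subarcs inside $\Om$ can be handled as before while the endpoint values are $0$---the inequality holds trivially or follows from the interior case. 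This is the standard argument that $\Np_0$ is insensitive to the boundary structure, and once it is in place the norm identity $\|u\|_{\Np(\clOmm)}=\|u|_\Om\|_{\Np(\Om)}=\|u|_\Om\|_{\Np(\overline{\Om};\mu_0)}$ follows exactly as in Lemma~\ref{lem-fPhi} because $\mu(\bdym\Om)=\mu_0(\bdy\Om)=0$. The \p-weak upper gradient version of the last statement then follows by the standard approximation of \p-weak upper gradients in $\Lploc$ by genuine upper gradients (Koskela--MacManus), applied on both sides of $\Phi$.
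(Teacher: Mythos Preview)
Your overall strategy matches the paper's: prove a chain of inclusions (the paper does $\Np_0(\Om)\subset\Np_0(\Om;\mu_0)\subset\Np_0(\Omm)\subset\Np_0(\Om)$), use Lemma~\ref{lem-fPhi} for the middle inclusion, and handle the hard inclusion $\Np_0(\Omm)\subset\Np_0(\Om)$ by a curve-splitting argument. But the crucial technical step in that last inclusion is missing from your sketch.

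You write that after splitting $\ga$ at its first exit time $t_0$ from $\Om$, ``the subarcs inside $\Om$ can be handled as before while the endpoint values are $0$.'' The problem is that the subarc $\ga|_{[0,t_0)}$ is only half-open in $\Om$: the upper gradient property for curves lying in $\Om$ gives $|f(\ga(0))-f(\ga(t))|\le\int_0^t g\,ds$ only for $t<t_0$, and you cannot pass to the limit $t\to t_0$ without knowing that $f(\ga(t))\to 0$, which is not given ($f$ is only assumed to vanish on $\bdym\Om$, not to be continuous up to $\bdy\Om$). The paper resolves this by observing that $\ga|_{[0,t_0)}$, viewed as a map into $\Omm$, is $1$-Lipschitz (since arc lengths with respect to $d$ and $\dM$ coincide, Lemma~\ref{lem-arc-length}), hence extends continuously to a map $\gat:[0,t_0]\to\clOmm$ by completeness of $\clOmm$; the extended endpoint $\gat(t_0)$ must lie in $\bdym\Om$ (else $\ga(t_0)\in\Om$), so $f(\gat(t_0))=0$, and \emph{now} the upper gradient inequality on $\clOmm$ (not merely on $\Om$) applied to the compact curve $\gat$ gives $|f(\ga(0))|\le\int_{\gat}g\,ds=\int_{\ga|_{[0,t_0)}}\gt\,ds$. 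This lifting via the Mazurkiewicz metric is the heart of the proof and is absent from your outline.

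A smaller point: your direct argument for $\Np_0(\Om;\mu_0)\subset\Np_0(\Om)$ via ``$g_f=0$ a.e.\ on $\{f=0\}$'' is circular. That identity for the minimal \p-weak upper gradient is taken with respect to the ambient measure; starting from $f\in\Np(\overline{\Om};\mu_0)$ you only get $g_f=0$ $\mu_0$-a.e.\ on $\bdy\Om$, which is vacuous since $\mu_0(\bdy\Om)=0$, so you cannot conclude that $g_f\in L^p(\overline{\Om};\mu)$. The paper sidesteps this by closing the cycle through $\Np_0(\Omm)$ instead.
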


\begin{proof}  
The inclusion $\Np_0(\Om)\subset\Np_0(\Om;\mu_0)$ is clear.

Assume next that $f \in \Np_0(\Om;\mu_0)$. 
Then by definition $f\in \Np(\clOm;\mu_0)$ with $f=0$ on $\bdy \Om$.
Lemma~\ref{lem-fPhi} yields $f \circ \Phi \in \Np(\clOmm)$,
and as $f \circ \Phi \equiv 0$ on $\bdym \Om$ we see that
$f \circ \Phi \in \Np_0(\Omm)$. Since $f \equiv f \circ \Phi$ on $\Om=\Omm$, 
we conclude that  $\Np_0(\Om;\mu_0) \subset \Np_0(\Omm)$. 

Finally, if $f \in \Np_0(\Omm)$, then $f\in\Np(\clOmm)$ with $f=0$ on $\bdym \Om$.
Let $g \in L^p(\clOmm)$ be an upper gradient of $f$ on $\clOmm$, and set
\[
   \ft= \begin{cases}
	f & \text{in } \Om, \\
	0& \text{on } X \setm \Om,
	\end{cases}
	\quad \text{and} \quad
   \gt= \begin{cases}
	g & \text{in } \Om, \\
	0& \text{on } X \setm  \Om.
	\end{cases}
\]
We shall show that $\gt$ is an upper gradient of $\ft$ on 
$X$. Let $\ga :  [0,l_\ga] \to X$ be a rectifiable curve.
If $\ga([0,l_\ga]) \subset \Om$, then clearly
\[
     |\ft(\ga(0))-\ft(\ga(l_\ga))| 
     = |f(\ga(0))-f(\ga(l_\ga))| 
	\le \int_\ga g \, ds
	= \int_\ga \gt \, ds,
\]
as arc lengths are the same with respect to the  metrics $d$ and $\dM$.
If $\ga([0,l_\ga])\cap\Om$ is empty, then the above inequality is trivial.
We may therefore (by splitting $\ga$ into two parts and reversing
the orientation if necessary) assume that $\ga([0,l_\ga]) \not\subset \Om$ and that
$\ga(0)\in\Om$. Let $t_0=\inf\{0 \le t \le l_\ga : \ga(t) \notin \Om\}>0$
and $t_1=\sup\{0 \le t \le l_\ga : \ga(t) \notin \Om\}\le l_\ga$.
Because $\Om$ is open, we have $\ga(t_0),\ga(t_1) \in X \setm  \Om$.
We want to show that 
\[
     |\ft(\ga(0))-\ft(\ga(t_0))| \le \int_{\ga|_{[0,t_0)}} \gt \, ds.
\]
Defining the map $\gat:[0,t_0)\to\Omm$ by $\gat(t)=\ga(t)$ for $0 \le t < t_0$,
we note that $\ga$ is arc-length parameterized with respect
to $\dM$, by Lemma~\ref{lem-arc-length}.  It follows that
$\gat$ is a $1$-Lipschitz map from $[0,t_0)$ to $\Omm$, and hence has 
a continuous extension, also denoted $\gat$, from $[0,t_0]$ to $\clOmm$.   
If $\gat(t_0)\in\Omm=\Om$, then so would $\ga(t_0)$. Thus we conclude that 
$\gat(t_0)\in\partial_M\Om$, i.e.\ $\ft(\ga(t_0))=f(\gat(t_0))=0$. Hence
\[
     |\ft(\ga(0))-\ft(\ga(t_0))|
      =  |f(\gat(0))-f(\gat(t_0))|
      \le \int_{\gat|_{[0,t_0)}} g \, ds
      = \int_{\ga|_{[0,t_0)}} \gt \, ds,
\]
as $g$ is an upper gradient of $f$ on $\clOmm$.  Similarly, if $t_1<l_\ga$, then
\[
     |\ft(\ga(t_1))-\ft(\ga(l_\ga))| 
	\le  \int_{\ga|_{(t_1,l_\ga]}} \gt \, ds.
\]
The above inequality holds trivially if $t_1=l_\ga$. Hence
\begin{align*}
     |\ft(\ga(0))-\ft(\ga(l_\ga))| 
     & \le |\ft(\ga(0))-\ft(\ga(t_0))| 
      +|\ft(\ga(t_0))-\ft(\ga(t_1))|  \\
	& \quad     + |\ft(\ga(t_1))-\ft(\ga(l_\ga))| \\
	& \le \int_{\ga|_{[0,t_0)}} \gt \, ds
	+ 0 + \int_{\ga|_{(t_1,l_\ga]}} \gt \, ds \\
	&	\le \int_{\ga} \gt \, ds.
\end{align*}
Thus $\gt \in L^p(X)$ is an upper gradient of $f$ on $X$, and hence $f \in \Np_0(\Om)$.
\end{proof}

For the Dirichlet problem in this paper we will also need
the following consequence of Proposition~\ref{prop-Np0}.

\begin{prop} \label{prop-Np0-qcont-Omm} 
Let $f \in \Np_0(\Omm)$. Then $f$, extended by\/ $0$ to $\bdym \Om$,
is $\bCp(\,\cdot\,;\Omm)$-quasi\-con\-tin\-u\-ous.
\end{prop}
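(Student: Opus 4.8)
The plan is to deduce this from the quasicontinuity of Newtonian functions on $X$ (Theorem~\ref{thm-quasicont}) and then transfer that quasicontinuity to $\clOmm$ through the map $\Phi$. First I would invoke Proposition~\ref{prop-Np0}: since $f\in\Np_0(\Omm)=\Np_0(\Om)=\Np_0(\Om;X)$, the extension $\ft$ of $f$ by $0$ to all of $X$ belongs to $\Np(X)$ and satisfies $\ft=0$ on $X\setm\Om$. By the standing assumptions $X$ is complete, $\mu$ is doubling and $X$ supports a \p-Poincar\'e inequality, so Theorem~\ref{thm-quasicont}\,\ref{qcont} applies and $\ft$ is quasicontinuous in $X$. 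Hence, given $\eps>0$, there is an open set $U\subset X$ with $\Cp(U)<\eps$ such that $\ft|_{X\setm U}$ is real-valued continuous.

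Next I would observe that the extension of $f$ by $0$ to $\bdym\Om$ coincides with $\ft\circ\Phi$ on $\clOmm$: on $\Omm=\Om$ this is immediate since $\Phi$ is the identity there and $\ft|_\Om=f$, while on $\bdym\Om$ both sides vanish, because $\Phi(\bdym\Om)\subset\bdy\Om$ (which follows from the local equivalence of $d$ and $\dM$ in $\Om$; see the end of Section~\ref{sect-dM}) and $\ft=0$ on $\bdy\Om$. Now put $\Ut=\Phi^{-1}(U)$; this is relatively open in $\clOmm$ since $\Phi$ is continuous. As $\Phi$ maps $\clOmm\setm\Ut$ into $\overline{\Om}\setm U\subset X\setm U$, on which $\ft$ is real-valued continuous, the function $f|_{\clOmm\setm\Ut}=\ft\circ\Phi|_{\clOmm\setm\Ut}$ is a composition of continuous maps and hence real-valued continuous. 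Finally, applying Lemma~\ref{lem-Phicap-Omm} with $E=U\cap\overline{\Om}$, and using that $\Phi^{-1}(U)=\Phi^{-1}(U\cap\overline{\Om})$ together with the monotonicity of $\Cp$, we get $\bCp(\Ut;\Omm)\le\Cp(U\cap\overline{\Om})\le\Cp(U)<\eps$. Letting $\eps\to0$ finishes the proof.

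The argument is largely routine once the framework is set up; the point to be careful about is that one cannot invoke a quasicontinuity theorem on $\clOmm$ directly, since $\clOmm$, although compact by Theorem~\ref{thm-clOmm-cpt}, is not known to carry a doubling measure or a Poincar\'e inequality. Thus the quasicontinuity must be imported from $X$ via $\Phi$, and this is precisely where Proposition~\ref{prop-Np0} (to guarantee $\ft\in\Np(X)$) and Lemma~\ref{lem-Phicap-Omm} (to bound $\bCp(\Phi^{-1}(U);\Omm)$ by $\Cp(U)$) are needed.
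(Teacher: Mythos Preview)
Your proof is correct and follows essentially the same route as the paper: extend $f$ by zero to $X$ via Proposition~\ref{prop-Np0}, use quasicontinuity in $X$ from Theorem~\ref{thm-quasicont}, and pull the exceptional open set back through $\Phi$, bounding its $\bCp(\,\cdot\,;\Omm)$-capacity by Lemma~\ref{lem-Phicap-Omm}. Your version is in fact slightly more careful than the paper's in two places: you justify explicitly that the zero-extension to $\bdym\Om$ equals $\ft\circ\Phi$, and you apply Lemma~\ref{lem-Phicap-Omm} to $E=U\cap\overline{\Om}$ rather than to $U$ itself (the lemma being stated only for subsets of $\overline{\Om}$).
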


\begin{proof} 
Let 
\[
   \ft= \begin{cases}
	f & \text{in } \Om, \\
	0& \text{on } X \setm \Om.
	\end{cases}
\]
By  Proposition~\ref{prop-Np0}, $\ft\in\Np_0(\Om)$. 
So  $\ft \in \Np(X)$, and hence by Theorem~\ref{thm-quasicont}\,\ref{qcont}
 it is  quasicontinuous in $X$. For $\eps>0$ there exists an open set $U \subset X$ 
 with $\Cp(U)< \eps$ such that $\ft|_{X \setm U}$ is continuous.
Then $\Ut:= \Phi^{-1}(U)$ is open in $\clOmm$ by the continuity of $\Phi$.
Moreover, $f=\ft \circ \Phi$ and therefore
\[
    f|_{\clOmm \setm \Ut} = \ft|_{X \setm U} \circ \Phi
\]
is continuous. Lemma~\ref{lem-Phicap-Omm} shows that
$\bCp(\Ut;\Omm) \le \Cp(U) < \eps$.
As $\eps>0$ was arbitrary this shows that $f$ is 
$\bCp(\,\cdot\,;\Omm)$-quasi\-con\-tin\-u\-ous in $\clOmm$.
\end{proof}

\section{\texorpdfstring{\p-harmonic}{p-harmonic} and superharmonic functions}
\label{sect-superharm}

In this section we introduce \p-harmonic and superharmonic functions,
as well as obstacle problems, which all will be needed in later sections.
For further discussion and references on these topics  see Bj\"orn--Bj\"orn~\cite{BBbook}
(which also contains proofs of the facts mentioned in this section).

\begin{deff} \label{def-quasimin}
A function $u \in \Nploc(\Om)$ is a
\emph{\textup{(}super\/\textup{)}minimizer} in $\Om$
if 
\[ 
      \int_{\phi \ne 0} g^p_u \, d\mu
           \le \int_{\phi \ne 0} g_{u+\phi}^p \, d\mu
           \quad \text{for all (nonnegative) } \phi \in \Np_0(\Om).
\] 
A \emph{\p-harmonic function} is a continuous minimizer.
\end{deff}

For characterizations of minimizers and superminimizers  see A.~Bj\"orn~\cite{ABkellogg}.
Minimizers were first studied for functions in $\Np(X)$
in Shanmugalingam~\cite{Sh-harm}, and
it was shown in Kinnunen--Shanmugalingam~\cite{KiSh01} that 
under the standing assumptions of this paper,
minimizers can be modified on a set of zero capacity to obtain
a \p-harmonic function. For a superminimizer $u$,  it was shown by
Kinnunen--Martio~\cite{KiMa02} that its \emph{lower semicontinuous regularization}
\begin{equation}
\label{essliminf}
 u^*(x):=\essliminf_{y\to x} u(y)= \lim_{r \to 0} \essinf_{B(x,r)} u
\end{equation}
is also a superminimizer  and $u^*= u$ q.e. 

We follow Kinnunen--Martio~\cite{KiMa02} in the following
definition of the obstacle problem. Let $V \subset X$ be a
nonempty bounded open set with $\Cp(X \setm V)>0$. (If $X$ is
unbounded then the condition $\Cp(X \setm V)>0$ is of course immediately fulfilled.)

\begin{deff} \label{deff-obst}
For $f \in \Np(V)$ and $\psi : V \to \eR$, we set
\[
    \K_{\psi,f}(V)=\{v \in \Np(V) : v-f \in \Np_0(V)
            \text{ and } v \ge \psi \ \text{a.e. in } V\}.
\]
A function $u \in \K_{\psi,f}(V)$ is a \emph{solution of the $\K_{\psi,f}(V)$-obstacle problem}
if
\[
       \int_V g^p_u \, d\mu
       \le \int_V g^p_v \, d\mu
       \quad \text{for all } v \in \K_{\psi,f}(V).
\]
\end{deff}

A solution to the $\K_{\psi,f}(V)$-obstacle problem is easily
seen to be a superminizer in $V$. Conversely, a superminizer $u$ in $\Om$ is a solution
of the $\K_{u,u}(V)$-obstacle problem for all $V \Subset \Om$,
i.e. $V$ such that $\overline{V}$ is a compact subset of $\Om$. 

Kinnunen--Martio~\cite[Theorem~3.2]{KiMa02} 
showed that if $\K_{\psi,f}(V)$ is nonempty,  then
there is a solution $u$ of the $\K_{\psi,f}(V)$-obstacle problem,
and this solution is unique up to equivalence in $\Np(V)$. Moreover, $u^*$ 
is the unique lower semicontinuously regularized solution.
If the obstacle $\psi$ is continuous, then $u^*$
is also continuous, see~\cite[Theorem~5.5]{KiMa02}.
The obstacle $\psi$, as a continuous function, is even allowed
to take the value $-\infty$. Given $f \in \Np(V)$, we let $\oHpind{V} f$ denote the
continuous solution of the $\K_{-\infty,f}(V)$-obstacle problem;
this function is \p-harmonic in $V$ and takes on the same boundary values
(in the Sobolev sense) as $f$ on $\partial V$, and hence it is also
called the solution of the Dirichlet problem with Sobolev boundary values. 
When $f \in \Np(X)$ this solution agrees with the one
constructed in~\cite{Sh-harm} and studied in~\cite{KiSh01}.

\begin{deff} \label{deff-superharm}
A function $u : \Om \to (-\infty,\infty]$ is \emph{superharmonic}
in $\Om$ if
\begin{enumerate}
\renewcommand{\theenumi}{\textup{(\roman{enumi})}}%
\item $u$ is lower semicontinuous;
\item \label{cond-b}
$u$ is not identically $\infty$ in any component of $\Om$;
\item \label{cond-c}
for every nonempty open set $V \Subset \Om$ and all functions
$v \in \Lip(X)$, we have $\oHpind{V} v \le u$ in $V$
whenever $v \le u$ on $\bdy V$.
\end{enumerate}

A function $u : \Om \to [-\infty,\infty)$ is \emph{subharmonic}
in $\Om$ if $-u$ is superharmonic.
\end{deff}

This definition of superharmonicity is equivalent to  the ones in
Hei\-no\-nen--Kil\-pe\-l\"ai\-nen--Martio~\cite{HeKiMa} and
Kinnunen--Martio~\cite{KiMa02}, see Theorem~6.1 in A.~Bj\"orn~\cite{ABsuper}.
A locally bounded superharmonic function is a superminimizer,
and all superharmonic functions are lower semicontinuously regularized.
Conversely, any lower semicontinuously regularized superminimizer
is superharmonic.

By Proposition~\ref{prop-Np0}, functions that are
\p-harmonic on an open subset of $\Om$ with respect to
either of the metrics $d$ and $\dM$ will be \p-harmonic
on that subset with respect to both metrics. 
By Proposition~\ref{prop-Np0} we also see that if $f\in \Np(\Om)=\Np(\Omm)$,
then $\K_{\psi,f}(\Om)=\K_{\psi,f}(\Omm)$, and thus the 
obstacle problem is exactly the same for both metrics.
In particular, $\oHpind{\Om} f=\oHpind{\Omm} f$ for $f \in \Np(\Om)$.

If we let $V \Subset \Om$ and equip it with the 
Mazurkiewicz  distance $\dM$ and the measure $\mu$, 
both inherited from $\Om$, we similarly see that 
$\K_{\psi,f}(V)=\K_{\psi,f}(V;\dM)$ since the metrics are equivalent on 
$\overline{V}$ and arc lengths are the same. It follows 
that also the class of all superharmonic functions on $\Om$ is the same 
with respect to both metrics $d$ and $\dM$. 
Thus within $\Om$ we have no reason to distinguish between, e.g.,
\p-harmonic functions defined using the metric $d$ and the metric $\dM$.

\section{Perron solutions with respect to \texorpdfstring{$\Omm$}{}}
\label{sect-Perron-Omm}

\emph{In addition to the standing assumptions 
described at the end of Section~\ref{sect-newcap}, 
we assume in this section that\/ $\Om$ is a 
bounded domain which is finitely connected at the boundary
and that, as in Section~\ref{sect-superharm}, $\Cp(X \setm \Om)>0$.}

\medskip

The main point of this paper is  that in considering
the Dirichlet boundary value problem there
is a difference between  $\Om$ and $\Omm$.
However, we saw in the previous section that
the Sobolev solutions $\oHpind{\Om} f$ and $\oHpind{\Omm} f$
coincide for $f\in\Np(\Om)$. For this reason we will usually denote 
this common solution by $\oHp f$. We also write $\K_{\psi,f}:=\K_{\psi,f}(\Om)$.

We shall now consider the Dirichlet problem for arbitrary functions
defined on the Mazurkiewicz boundary $\bdym\Om$.
This will be done by means of Perron solutions on $\Omm$ defined below.
The distinction from Perron solutions on $\Om$ is subtle but has 
important consequences for the Dirichlet problem since the Mazurkiewicz
boundary $\bdym\Om$ is finer than $\bdy\Om$.

\begin{deff}   \label{def-Perron}
Given a function $f : \bdym \Om \to \eR$, let $\UU_f(\Omm)$ be the set of all 
superharmonic functions $u$ on $\Omm$, bounded from below,  such that 
\[ 
	\liminf_{\Om \ni y \dMto x} u(y) \ge f(x) \quad \text{for all }
	x \in \bdym \Om.
\] 
The \emph{upper Perron solution} of $f$ is the function
\[ 
    \uHpind{\Omm} f (x) = \inf_{u \in \UU_f(\Omm)}  u(x), \quad x \in \Om.
\]
Similarly, let $\LL_f(\Omm)$ be the set of all 
subharmonic functions $u$ on $\Om$, bounded from above, such that 
\[
\limsup_{\Om \ni y \dMto x} u(y) \le f(x) \quad \text{for all } x \in \bdym \Om,
\]
and define the \emph{lower Perron solution} of $f$ by
\[ 
    \lHpind{\Omm} f (x) = \sup_{u \in \LL_f(\Omm)}  u(x), \quad x \in \Om.
\]
If $\uHpind{\Omm} f = \lHpind{\Omm} f$, then we let $\Hpind{\Omm} f := \uHpind{\Omm} f$
and $f$ is said to be \emph{resolutive} with respect to $\Omm$.

We similarly define $\uHpind{\Om} f$, $\lHpind{\Om} f$ and
$\Hpind{\Om} f$ for $f : \bdy \Om \to \eR$.
\end{deff}

  Immediate consequences of the above definition are that 
$\lHpind{\Omm} f= - \uHpind{\Omm} (-f)$ and that
\begin{equation} \label{eq-f1<f2}
    \uHpind{\Omm}f_1\le \uHpind{\Omm}f_2, \quad \text{if } f_1\le f_2.
\end{equation}

Observe that $\uHpind{\Om} f$ is \p-harmonic unless it is identically $\pm \infty$,
see Theorem~4.1 in Bj\"orn--Bj\"orn--Shan\-mu\-ga\-lin\-gam~\cite{BBS2}.
The proof therein applies also to $\uHpind{\Omm}f$ without any change.
The following comparison principle makes it possible to compare
the upper and lower Perron solutions.

\begin{prop} \label{prop-comparison}
Assume  that $u$ is superharmonic and that $v$ is subharmonic in\/ $\Om$. If 
\begin{equation} \label{eq-comp}
       \infty \ne  \limsup_{\Om \ni y \dMto x} v(y)
        \le \liminf_{\Om \ni y \dMto x} u(y) \ne -\infty
        \quad \text{for all } x \in \bdym \Om,
\end{equation}
then $v \le u$ in\/ $\Om$. 
\end{prop}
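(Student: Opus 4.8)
The plan is to reduce the comparison principle with respect to $\Omm$ to the corresponding comparison principle on subdomains $V \Subset \Om$, where both $u$ and $v$ are continuous enough to compare directly via the obstacle-problem characterization of super/subharmonicity. First I would fix $\eps > 0$ and consider the open sets
\[
    V_\eps = \{ x \in \Om : v(x) > u(x) + \eps \}.
\]
Our goal is to show $V_\eps = \emptyset$ for every $\eps > 0$, which yields $v \le u$ in $\Om$. Since $v$ is subharmonic it is bounded from above, and since $u$ is superharmonic it is bounded from below; both are real-valued on $\Om$ except that $u$ may take the value $+\infty$ (but $v - u$ is then still well-behaved where we need it). The key structural fact is that $\clOmm$ is compact (Theorem~\ref{thm-clOmm-cpt}), so $V_\eps$, viewed as a subset of $\Omm$, has its $\dM$-closure contained in the compact set $\clOmm$.

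The heart of the argument is to show that $\overline{V_\eps}^{\,\dM} \subset \Omm$, i.e.\ that the $\dM$-closure of $V_\eps$ does not meet $\bdym\Om$. Suppose it did, so there is $x \in \bdym\Om$ and a sequence $\Om \ni y_j \dMto x$ with $v(y_j) > u(y_j) + \eps$. Then
\[
    \limsup_{j\to\infty} v(y_j) \ge \eps + \liminf_{j\to\infty} u(y_j),
\]
and combining this with \eqref{eq-comp} — which bounds $\limsup_{\Om \ni y \dMto x} v(y)$ above by $\liminf_{\Om \ni y \dMto x} u(y)$, both finite — gives $\liminf u(y_j) \le \limsup v(y_j) - \eps \le \liminf_{\Om\ni y\dMto x} u(y) - \eps$, contradicting finiteness unless the two sides coincide, in which case we still get a strict $\eps$-gap contradiction. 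Hence $\overline{V_\eps}^{\,\dM}$ is a compact subset of $\Omm$; but since the metrics $d$ and $\dM$ are locally equivalent on $\Om$ (Lemma~\ref{lem-d-dm-din} and the discussion in Section~\ref{sect-NpOmm}), this forces $\overline{V_\eps}^{\,\dM}$ to be a compact subset of $\Om$ in the given metric as well, so $V_\eps \Subset \Om$.

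Now on $V := V_\eps$ we have $V \Subset \Om$, $u$ superharmonic on $V$ and $v$ subharmonic on $V$, with $v \le u + \eps$ on $\bdy V$ (by lower semicontinuity of $u$, upper semicontinuity of $v$, and the definition of $V_\eps$ as the set where the strict inequality $v > u+\eps$ holds). The standard comparison principle for super/subharmonic functions on domains compactly contained in $\Om$ — available from the obstacle-problem machinery in Section~\ref{sect-superharm} (cf.\ condition \ref{cond-c} in Definition~\ref{deff-superharm}, approximating by Lipschitz functions) — then gives $v \le u + \eps$ on $V$, contradicting the very definition of $V = V_\eps$ unless $V = \emptyset$. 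Letting $\eps \to 0$ finishes the proof. The main obstacle is the step showing $\overline{V_\eps}^{\,\dM} \subset \Omm$: one must be careful that approaching a Mazurkiewicz boundary point along a sequence in $V_\eps$ is genuinely controlled by \eqref{eq-comp}, using that the liminf/limsup in that hypothesis are taken over \emph{all} sequences $\dM$-converging to $x$, so in particular over our chosen sequence $\{y_j\}$; the finiteness clauses in \eqref{eq-comp} are exactly what rule out the degenerate case.
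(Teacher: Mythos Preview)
Your argument is correct and rests on the same two pillars as the paper's proof: the compactness of $\clOmm$ (Theorem~\ref{thm-clOmm-cpt}) and the already-known comparison principle with respect to the given metric $d$ (Kinnunen--Martio~\cite[Theorem~7.2]{KiMa02}). The tactical route differs, however. The paper fixes an exhaustion $\Om_1\Subset\Om_2\Subset\cdots\Subset\Om$ and, for given $\eps>0$, uses \eqref{eq-comp} together with compactness of $\clOmm$ to cover $\bdym\Om$ by finitely many $\dM$-balls on which $u-v>-\eps$; this forces $v<u+\eps$ on $\bdy\Om_k$ for some large $k$, and then \cite[Theorem~7.2]{KiMa02} applied on $\Om_k$ finishes. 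You instead work with the excess set $V_\eps=\{v>u+\eps\}$, show directly from \eqref{eq-comp} and compactness that $V_\eps\Subset\Om$, and then invoke the $d$-comparison principle on $V_\eps$ to obtain a contradiction. Both approaches are standard and of comparable length; the exhaustion version has the mild advantage that one applies \cite[Theorem~7.2]{KiMa02} on a fixed nice set $\Om_k$ rather than on the a priori unknown set $V_\eps$, but this is cosmetic.

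Two small points. First, your sentence ``since $v$ is subharmonic it is bounded from above, and since $u$ is superharmonic it is bounded from below'' is not correct in general (super/subharmonic functions need not be globally bounded on $\Om$); fortunately you never use this, so just delete it. Second, the comparison principle you invoke at the end is not derived from the obstacle-problem machinery in Section~\ref{sect-superharm}; the correct citation is \cite[Theorem~7.2]{KiMa02}, exactly as the paper uses. With these cosmetic fixes your proof stands.
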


\begin{cor} \label{cor-uHp-lHp}
If $f:\bdym\Om\to\R$, then  
\[
   \uHpind{\Omm} f \ge \lHpind{\Omm} f.
\]
\end{cor}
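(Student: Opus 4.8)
The plan is to derive Corollary~\ref{cor-uHp-lHp} directly from the comparison principle in Proposition~\ref{prop-comparison}, following the familiar pattern from the theory of Perron solutions on the ordinary boundary. The corollary asserts that for a \emph{real-valued} boundary function $f:\bdym\Om\to\R$, the upper Perron solution dominates the lower one; the hypothesis that $f$ is finite everywhere is exactly what is needed to fit the two-sided finiteness requirement \eqref{eq-comp} into Proposition~\ref{prop-comparison}.

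First I would fix arbitrary $u\in\UU_f(\Omm)$ and $v\in\LL_f(\Omm)$. By definition $u$ is superharmonic and bounded from below with $\liminf_{\Om\ni y\dMto x}u(y)\ge f(x)$ for all $x\in\bdym\Om$, and $v$ is subharmonic and bounded from above with $\limsup_{\Om\ni y\dMto x}v(y)\le f(x)$ for all $x\in\bdym\Om$. Chaining these two inequalities through the real number $f(x)$ gives, for every $x\in\bdym\Om$,
\[
    \limsup_{\Om\ni y\dMto x}v(y)\le f(x)\le\liminf_{\Om\ni y\dMto x}u(y),
\]
and since $f(x)\in\R$, the outer terms cannot be $+\infty$ on the left or $-\infty$ on the right; that is, \eqref{eq-comp} holds. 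Proposition~\ref{prop-comparison} then yields $v\le u$ throughout $\Om$.

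Finally I would take suprema and infima. Since $v\le u$ pointwise in $\Om$ for every such pair, we get $\sup_{v\in\LL_f(\Omm)}v\le u$ in $\Om$ for each fixed $u\in\UU_f(\Omm)$, i.e.\ $\lHpind{\Omm}f\le u$; taking the infimum over $u\in\UU_f(\Omm)$ gives $\lHpind{\Omm}f\le\uHpind{\Omm}f$ in $\Om$, which is the claim. (One should note that $\UU_f(\Omm)$ and $\LL_f(\Omm)$ may be empty, in which case $\uHpind{\Omm}f\equiv+\infty$ or $\lHpind{\Omm}f\equiv-\infty$ and the inequality holds trivially; the argument above is only needed when both classes are nonempty.)

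There is no real obstacle here: the corollary is a routine consequence of the comparison principle, and the only point requiring a moment's care is the finiteness of $f$, which is precisely what rules out the degenerate cases $+\infty\le\text{(something)}$ or $\text{(something)}\le-\infty$ in \eqref{eq-comp}. The substantive work is all in Proposition~\ref{prop-comparison}, which we are allowed to invoke.
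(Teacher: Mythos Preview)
Your proof is correct and is exactly the standard argument the paper has in mind: the corollary is stated immediately after Proposition~\ref{prop-comparison} without a separate proof, signalling that it is a direct consequence of the comparison principle via the routine ``take any $u\in\UU_f$, $v\in\LL_f$, apply Proposition~\ref{prop-comparison}, then take sup and inf'' argument you wrote out. Your handling of the finiteness condition and the empty-class edge cases is also correct.
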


The result corresponding to Proposition~\ref{prop-comparison}
with respect to the given  metric $d$  was obtained in
Kinnunen--Martio~\cite[Theorem~7.2]{KiMa02}.

\begin{proof}[Proof of Proposition~\ref{prop-comparison}]
Let $\Om_1 \Subset \Om_2 \Subset \cdots \Subset \Om= \bigcup_{k=1}^\infty \Om_k$
and $\eps >0$. For every $x\in\bdym\Om$, it follows from \eqref{eq-comp} that
\[
        \liminf_{\Om \ni y \dMto x} {(u(y)-v(y)) \ge 0 }
\]
and hence there is a ball $B_x^M\ni x$ (with respect to the metric $\dM$ on $\clOmm$) such that 
\[
u-v> -\eps \quad \text{in }B_x^{M}\cap\Om.
\]
By the compactness of $\clOmm$ (recall that we assume $\Om$ to be finitely connected at the
boundary), there are finitely many balls $B_{x_1}^M,\cdots,B_{x_N}^M$ and some $k>1/\eps$ such that 
\[ 
    \clOmm \subset \Om_k \cup B_{x_1}^{M} \cup \cdots \cup B_{x_N}^{M}.
\]
It follows that $v < u+\eps$ on $\bdy \Om_k$.  An application of~\cite[Theorem~7.2]{KiMa02} 
to $u+\eps$ and $v$ in $\Om_k$ now tells
us that $v\le u+\eps$ on $\Om_k$. Letting $\eps\to 0$ completes the proof.
\end{proof}

The following is the main result of this section.
Its consequences will be given in Section~\ref{sect-bdyM-Om}.

\begin{thm} \label{thm-Newt-resolve-Omm}
Let $f: \clOmm \to \eR$ be a $\bCp(\,\cdot\,;\Omm)$-quasi\-con\-tin\-u\-ous function
such that $f|_{\Om} \in \Np(\Om)$. Then $f$ is resolutive with respect to\/ $\Omm$ 
and $\Hpind{\Omm} f = \oHp f$.
\end{thm}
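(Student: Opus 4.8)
The plan is to reduce the resolutivity statement for $\Omm$ to the already-known resolutivity results for Newtonian and quasicontinuous functions on the standard boundary, by transporting everything through the $1$-Lipschitz map $\Phi:\clOmm\to\clOm$. The key structural facts that make this work are: $\Np(\Om)=\Np(\Omm)$ with the same energy (Lemma~\ref{lem-arc-length} and the discussion in Section~\ref{sect-NpOmm}); the superharmonic functions and obstacle-problem solutions on $\Om$ and $\Omm$ coincide (end of Section~\ref{sect-superharm}), so in particular $\oHp f$ is unambiguous and $\uHpind{\Omm}f$ is $p$-harmonic unless identically $\pm\infty$; and Corollary~\ref{cor-uHp-lHp}, which gives $\lHpind{\Omm}f\le\uHpind{\Omm}f$ once $f$ is real-valued (the $\bCp$-quasicontinuity lets us reduce to that case or handle $\pm\infty$-sets, which have $\bCp(\,\cdot\,;\Omm)$ capacity zero and hence are negligible for Perron solutions).

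First I would establish the ``easy'' half, $\lHpind{\Omm}f\le\oHp f\le\uHpind{\Omm}f$, when $f\in\Np(\Om)$ is itself already the trace of a Newtonian function; this is the analog of the corresponding fact from \cite{BBS2} and uses that $\oHp f-f\in\Np_0(\Om)=\Np_0(\Omm)$ (Proposition~\ref{prop-Np0}) together with the fact that Newtonian functions with zero boundary values can be used to build admissible sub/superharmonic competitors. The substance is then to prove the reverse inequality $\uHpind{\Omm}f\le\oHp f$ (and dually $\lHpind{\Omm}f\ge\oHp f$). The standard route is: for $\eps>0$ use $\bCp(\,\cdot\,;\Omm)$-quasicontinuity of $f$ to find a relatively open $U\subset\clOmm$ with $\bCp(U;\Omm)<\eps$ such that $f|_{\clOmm\setm U}$ is continuous; by Proposition~\ref{prop-outercap} (applied to $\bCp(\,\cdot\,;\Omm)$, which is legitimate since $\Np(\Omm)=\Np(\Om)$ consists of quasicontinuous functions) and Proposition~\ref{prop-Np0-qcont-Omm}, pick $w\in\A_U$ (with respect to $\Np(\Omm)$), $0\le w\le1$, with small norm and $w\ge1$ on a relatively open set containing $\overline{U\cap\Om}$ (in the $\dM$-topology) — here one truncates and uses that $w$ can be taken lower semicontinuous after a further small adjustment, so that $\oHpind{V}(\text{large multiple of }w)$-type barriers are available. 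Then $\oHp f + \oHp(M w) + (\text{constant})$ type functions, together with the continuity of $f$ off $U$ and the compactness of $\clOmm$, should dominate every member of $\UU_f(\Omm)$ up to $\eps$, giving $\uHpind{\Omm}f\le \oHp f + C\eps^{1/p}$; letting $\eps\to0$ finishes it. The compactness of $\clOmm$ (Theorem~\ref{thm-clOmm-cpt}, valid since $\Om$ is finitely connected at the boundary) is what lets one pass from local barriers near each boundary point of $\clOmm$ to a single global comparison, exactly as in the proof of Proposition~\ref{prop-comparison}.

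The main obstacle, I expect, is the bookkeeping around the capacity $\bCp(\,\cdot\,;\Omm)$ versus $\bCp(\,\cdot\,;\Om)$ and $\Cp$: the barrier function $w$ lives naturally on $\clOmm$ and must genuinely control the \emph{Mazurkiewicz}-boundary behavior of the Perron competitors, not just the metric-boundary behavior — so one cannot simply import the $\Om$-version of the argument and must check admissibility against the $\dM$-topology (using $d\le\dM$ in one direction and the structure of $\Phi^{-1}$ in the other, cf.\ Lemma~\ref{lem-Phicap-Omm}). A secondary technical point is making the barrier \emph{superharmonic} (lower semicontinuous, via solving an obstacle problem $\K_{w,w}$ and passing to the lsc-regularization) so that it is a legitimate element to add to functions in $\UU_f(\Omm)$; this is routine given Section~\ref{sect-superharm} but needs care since $\Omm$ is not proper in the given metric — however it is compact and locally agrees with $\Om$, so all the local regularity theory transfers. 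Once the upper/lower estimates both pinch to $\oHp f$, resolutivity and the identity $\Hpind{\Omm}f=\oHp f$ follow immediately.
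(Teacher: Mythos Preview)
Your overall direction --- build superharmonic competitors close to $\oHp f$ to get $\uHpind{\Omm}f\le\oHp f$, then close up via duality and Corollary~\ref{cor-uHp-lHp} --- matches the paper, but two steps you treat as routine are in fact the heart of the argument and do not work as you describe them. First, the barrier scheme ``$\oHp f+\oHp(Mw)$'' (or $\oHp f$ plus the obstacle solution of $\K_{w,w}$) does not produce a superharmonic function when $p\ne2$: superharmonicity is not preserved under addition in the nonlinear case. The paper avoids this by solving a \emph{single} obstacle problem $\K_{f_j,f_j}$ for the combined function $f_j=\oHp f+\psi_j$ (with $\psi_j$ from Lemma~\ref{lem-Newt}); the lsc-regularized solution $\phi_j$ is then automatically superharmonic with $\phi_j\ge f_j$ q.e., and Proposition~\ref{prop-3.2} gives $\phi_j\searrow\oHp f$ q.e.\ since $\oHp f$ itself solves $\K_{\oHp f,\oHp f}$. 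No quantitative estimate of the form ``$\le\oHp f+C\eps^{1/p}$'' is available here, nor is one needed.

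Second, you use the quasicontinuity of $f$, but what the argument actually requires is the quasicontinuity of $\oHp f$ extended to $\clOmm$ by setting $\oHp f:=f$ on $\bdym\Om$. To place $\phi_j$ in $\UU_f(\Omm)$ one must verify $\liminf_{y\to x}\phi_j(y)\ge f(x)$; since $\phi_j\ge\oHp f$ (not $\ge f$) in $\Om$, it is the continuity of $\oHp f$ off a small set in $\clOmm$ that delivers this, and continuity of $f$ on $\clOmm$ alone says nothing about the boundary behaviour of $\oHp f$. The paper secures this first, writing $f-\oHp f\in\Np_0(\Omm)$ and invoking Proposition~\ref{prop-Np0-qcont-Omm}. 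Two side remarks: the opening plan of ``reducing to the standard boundary via $\Phi$'' cannot work in general, since $f$ may take different values at distinct $\bdym\Om$-preimages of a single point of $\bdy\Om$; and the ``easy half'' $\lHpind{\Omm}f\le\oHp f\le\uHpind{\Omm}f$ is neither easy nor required.
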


The corresponding result for $\Om$ under the assumption that
$f \in \Np(X)$ (in which case the quasicontinuity of $f$ is automatic
by Theorem~\ref{thm-quasicont}\,\ref{qcont}) was obtained in
Bj\"orn--Bj\"orn--Shan\-mu\-ga\-lin\-gam~\cite[Theorem~5.1]{BBS2}. 
The proof here is more intricate since we need to be more careful 
with issues of quasicontinuity with respect to the capacity 
$\bCp$, which does not come for free.
We also need the following modification of Lemma~5.3 in
Bj\"orn--Bj\"orn--Shan\-mu\-ga\-lin\-gam~\cite{BBS2}.

\begin{lem} \label{lem-Newt}
Let\/ $\{U_k\}_{k=1}^\infty$ be a decreasing sequence of 
relatively open sets in\/ $\clOmm$ such that $\bCp(U_k;\Omm) <2^{-kp}$.
Then there exists a decreasing sequence of nonnegative 
functions\/ $\{\psi_j\}_{j=1}^\infty$ on\/ $\Om$ such that\/ $\| \psi_j \|_{\Np(\Om)} < 2^{-j}$ 
and $\psi_j \ge k-j$ in $U_k\cap\Om$.
\end{lem}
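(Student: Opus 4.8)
The plan is to realize all the functions $\psi_j$ as tails of one and the same series built from capacitary test functions for the sets $U_k$. For each $k\ge1$, since $\bCp(U_k;\Omm)<2^{-kp}$, I would use the definition of $\bCp(\,\cdot\,;\Omm)$ (and the truncation remark following Definition~\ref{deff-bCp}, together with $\Np(\Om)=\Np(\Omm)$) to pick $u_k\in\At_{U_k}$ — so $0\le u_k\le1$ and $u_k\equiv1$ on $U_k\cap\Om$ — and an upper gradient $g_k$ of $u_k$ on $\Om$ with
\[
   \|u_k\|_{L^p(\Om)}^p+\|g_k\|_{L^p(\Om)}^p<2^{-kp}.
\]
Then set $\psi_j:=\sum_{i=j+1}^\infty u_i$, the sum taken pointwise with values in $[0,\infty]$, which makes sense because the $u_i$ are nonnegative.

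Nonnegativity and monotonicity of $\{\psi_j\}$ are then immediate, since $\psi_j\ge0$ and $\psi_j-\psi_{j+1}=u_{j+1}\ge0$. For the norm estimate I would first argue that $g:=\sum_{i=j+1}^\infty g_i$ is an upper gradient of $\psi_j$ on $\Om$: along any curve $\ga$ one adds up the defining inequalities for the $u_i$ and passes to the limit by monotone convergence, the only point to watch being the convention for infinite endpoint values (if $\psi_j$ is infinite at one endpoint of $\ga$ but finite at the other, the summed inequality forces $\int_\ga g\,ds=\infty$, so the upper gradient inequality still holds); this is the same bookkeeping used, for the supremum, in the proof of Proposition~\ref{prop-Cp}. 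Then, combining Minkowski's inequality in $L^p(\Om)$ coordinatewise with Minkowski's inequality for the $p$-norm on $\R^2$ applied to the series of pairs $(\|u_i\|_{L^p(\Om)},\|g_i\|_{L^p(\Om)})$, one gets
\[
  \|\psi_j\|_{\Np(\Om)}
   \le\bigl(\|\psi_j\|_{L^p(\Om)}^p+\|g\|_{L^p(\Om)}^p\bigr)^{1/p}
   \le\sum_{i=j+1}^\infty\bigl(\|u_i\|_{L^p(\Om)}^p+\|g_i\|_{L^p(\Om)}^p\bigr)^{1/p}
   <\sum_{i=j+1}^\infty 2^{-i}=2^{-j}.
\]
In particular $\psi_j\in L^p(\Om)$, so $\psi_j$ is finite a.e.\ and genuinely belongs to $\Np(\Om)$.

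It remains to check that $\psi_j\ge k-j$ on $U_k\cap\Om$. If $k\le j$ this is trivial, the right-hand side being $\le0$. If $k>j$, then since the sequence $\{U_k\}$ is decreasing we have $U_k\subset U_i$, and hence $u_i\equiv1$ on $U_k\cap\Om$, for every $i$ with $j<i\le k$; summing these, $\psi_j\ge\sum_{i=j+1}^k u_i=k-j$ on $U_k\cap\Om$, as required. The one step that is not completely routine is verifying that the countable sum of the $g_i$ is an upper gradient of the pointwise sum $\psi_j$ — that is, handling the limiting argument and the infinite values correctly; once that is in place, the entire statement follows from the triangle inequality for $\|\cdot\|_{\Np(\Om)}$ and the monotonicity built into the construction.
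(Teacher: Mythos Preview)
Your construction is exactly the paper's: set $\psi_j=\sum_{i>j}u_i$ with $u_i\in\At_{U_i}$ near-optimal, and read off the properties. One small imprecision: with the paper's convention that the left-hand side of \eqref{ug-cond} is $\infty$ when both endpoint values are infinite, your $g=\sum_i g_i$ need not be a \emph{genuine} upper gradient of $\psi_j$ (a curve could lie entirely in the measure-zero set $\{\psi_j=\infty\}$ while $\int_\ga g<\infty$); it is, however, a \p-weak upper gradient, which is all you need for the $\Np$-norm bound. You can sidestep this bookkeeping altogether by observing $\|u_i\|_{\Np(\Om)}<2^{-i}$ and invoking the triangle inequality in the Banach space $\Np(\Om)$ directly, which is what the paper's one-line proof does.
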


\begin{proof}
Let $\psi_j=\sum_{k=j+1}^\infty f_k$, where $f_k\in\A_{U_k}$ with 
$\|f_k\|_{\Np(\Om)}<2^{-k}$ are admissible in the definition of $\bCp(U_k;\Omm)$.
\end{proof}

To prove Theorem~\ref{thm-Newt-resolve-Omm} we will
also need the following proposition, which
summarizes some useful convergence results 
for obstacle and Dirichlet problems. It consists of special cases of 
Farnana~\cite[Theorem~3.3]{Fa3}  and Kinnunen--Marola--Martio~\cite[Theorem~3]{KiMaMa}, 
but can also be found in Bj\"orn--Bj\"orn~\cite{BBbook} as Proposition~10.18 and Corollary~10.20.
For $f,f_j\in\Np(X)$ these results are due to Kinnunen--Shanmuga\-lingam~\cite{KiSh-polar} 
and Shan\-mu\-ga\-lin\-gam~\cite{Sh-conv}. 

\begin{prop}\label{prop-3.2}
Let\/ $\{f_j\}_{j=1}^\infty$ be a q.e.\
decreasing sequence of functions in $\Np(\Om)$ such that 
$f_j\to f$ in $\Np(\Om)$ as $j \to\infty$.
Then $\oHp f_j$ decreases to $ \oHp f$ locally uniformly in\/ $\Om$.

Moreover, if $u$ and $u_j$ are solutions of the $\K_{f,f}$- and
$\K_{f_j,f_j}$-obstacle problems, $j=1,2,\ldots$\,, then\/
$\{u_j\}_{j=1}^\infty$  decreases q.e.\ in\/ $\Omega $ to $u$.
\end{prop}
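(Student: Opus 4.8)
\textbf{Proof plan for Theorem~\ref{thm-Newt-resolve-Omm}.}

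The plan is to follow the strategy of \cite[Theorem~5.1]{BBS2}, but replacing the Sobolev capacity $\Cp$ throughout by $\bCp(\,\cdot\,;\Omm)$ and being careful that the relevant quasicontinuity and outer-capacity statements (Propositions~\ref{prop-outercap} and~\ref{prop-Np0-qcont-Omm}) hold in this finer setting. First I would reduce to the case where $f$ is bounded: truncating $f$ at levels $\pm m$ changes $\Hpind{\Omm} f$ and $\oHp f$ in a controlled way (both families of Perron functions and the obstacle solutions behave monotonically under truncation, using \eqref{eq-f1<f2} and the convergence of obstacle solutions), so it suffices to prove the theorem for $f\in\Np(\Om)\cap L^\infty$ that is $\bCp(\,\cdot\,;\Omm)$-quasicontinuous on $\clOmm$. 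The key inequality to establish is
\[
   \lHpind{\Omm} f \ge \oHp f \quad\text{in }\Om,
\]
since applying this to $-f$ and combining with Corollary~\ref{cor-uHp-lHp} gives $\uHpind{\Omm} f \le \oHp f \le \lHpind{\Omm} f \le \uHpind{\Omm} f$, whence equality and resolutivity.

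To prove $\lHpind{\Omm} f \ge \oHp f$, I would construct, for each $\eps>0$, a subharmonic function in $\LL_f(\Omm)$ that lies close to $\oHp f$. Using $\bCp(\,\cdot\,;\Omm)$-quasicontinuity of $f$, pick relatively open sets $U_k\subset\clOmm$ with $\bCp(U_k;\Omm)<2^{-kp}$ such that $f|_{\clOmm\setm U_k}$ is continuous; we may assume the $U_k$ decreasing. Feed these into Lemma~\ref{lem-Newt} to get a decreasing sequence $\{\psi_j\}$ in $\Np(\Om)$ with $\|\psi_j\|_{\Np(\Om)}<2^{-j}$ and $\psi_j\ge k-j$ on $U_k\cap\Om$. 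Then the functions $f-\psi_j$ form a q.e.\ decreasing sequence in $\Np(\Om)$ converging to $f$ in $\Np(\Om)$; let $u_j$ be the lsc-regularized solution of the $\K_{f-\psi_j,\,f-\psi_j}$-obstacle problem. By Proposition~\ref{prop-3.2}, $u_j$ decreases q.e.\ to the lsc-regularized solution of the $\K_{f,f}$-obstacle problem, which agrees q.e.\ with $\oHp f$; moreover each $u_j$ is superharmonic, hence (after subtracting a suitable constant) so is each $-u_j$ subharmonic-type comparison candidate. The crucial point is the boundary behavior: because $u_j\ge f-\psi_j$ a.e.\ and $u_j$ is lsc-regularized, one shows that at every $x\in\bdym\Om\setm\overline{U_k}^{\,M}$ (closure in $\clOmm$) one has $\liminf_{\Om\ni y\dMto x}u_j(y)\ge f(x)-\sup_{\Om\cap(\clOmm\setm U_k)}\!\psi_j$, using the continuity of $f$ off $U_k$ together with the lower bound on $\psi_j$ inside the $U_k$'s to control the exceptional directions. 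This forces $u_j$ (after a small downward shift by $\eps$) to lie below an element of $\UU_f(\Omm)$ built from $u_j$ itself, which combined with the dual construction for subharmonic functions yields $\uHpind{\Omm} f\le u_j+\eps$ and $\lHpind{\Omm} f\ge \tilde u_j-\eps$ where $\tilde u_j$ is the analogous subsolution. Passing $j\to\infty$ and then $\eps\to0$, using that $u_j\to\oHp f$ locally uniformly (Proposition~\ref{prop-3.2}) and that $\oHp f$ is continuous, gives both $\uHpind{\Omm} f\le\oHp f$ and $\lHpind{\Omm} f\ge\oHp f$.

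\textbf{Main obstacle.} The delicate step is the boundary estimate: transferring the a.e.\ inequality $u_j\ge f-\psi_j$ into a genuine pointwise $\liminf$ statement at Mazurkiewicz boundary points, while only having $\bCp(\,\cdot\,;\Omm)$-control (not $\Cp$-control) on the bad sets. This is exactly where the new capacity $\bCp(\,\cdot\,;\Omm)$ must be used rather than the ordinary Sobolev capacity, and where the $\liminf$ in the definition of $\A_E$ (Definition~\ref{deff-bCp}) is tailored to match the $\liminf_{\Om\ni y\dMto x}$ appearing in the definition of $\UU_f(\Omm)$. One has to check that the admissible functions $f_k\in\A_{U_k}$ used to build $\psi_j$ in Lemma~\ref{lem-Newt} indeed grow at least like $k-j$ along \emph{every} sequence $\Om\ni y\dMto x$ for $x$ in the $\dM$-closure of $U_k\cap\bdym\Om$, so that subtracting $\psi_j$ genuinely kills the oscillation of $f$ near the exceptional part of $\bdym\Om$; this uses that the defining condition of $\A_{U_k}$ is imposed with respect to the $\dM$-topology on $\clOmm$. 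Once this localization-to-the-boundary argument is in place, the rest is a routine combination of the comparison principle (Proposition~\ref{prop-comparison}), the convergence results (Proposition~\ref{prop-3.2}), and the sandwich with Corollary~\ref{cor-uHp-lHp}.
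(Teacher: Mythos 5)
You were asked to prove Proposition~\ref{prop-3.2}, the convergence result: for a q.e.\ decreasing sequence $\{f_j\}$ in $\Np(\Om)$ with $f_j\to f$ in $\Np(\Om)$, the solutions $\oHp f_j$ decrease to $\oHp f$ locally uniformly, and the lsc-regularized solutions of the $\K_{f_j,f_j}$-obstacle problems decrease q.e.\ to that of the $\K_{f,f}$-obstacle problem. What you have written is instead a proof plan for Theorem~\ref{thm-Newt-resolve-Omm}, the resolutivity theorem with respect to $\Omm$. Nowhere in your text is there an argument for Proposition~\ref{prop-3.2} itself; on the contrary, your plan invokes it twice as a known tool (``By Proposition~\ref{prop-3.2}, $u_j$ decreases q.e.\ to \dots'' and ``using that $u_j\to\oHp f$ locally uniformly (Proposition~\ref{prop-3.2})''). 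As a purported proof of the assigned statement this is therefore circular and, in substance, simply absent: there is no comparison argument for obstacle solutions with nested constraint sets, no use of the convergence $f_j\to f$ in the $\Np$-norm to pass to the limit in the minimization, and no argument (e.g.\ via a Harnack-type convergence theorem for decreasing sequences of \p-harmonic functions) upgrading pointwise or q.e.\ convergence of $\oHp f_j$ to locally uniform convergence.

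Note that the paper itself does not prove Proposition~\ref{prop-3.2} either: it records it as a special case of Farnana~\cite[Theorem~3.3]{Fa3} and Kinnunen--Marola--Martio~\cite[Theorem~3]{KiMaMa}, alternatively Bj\"orn--Bj\"orn~\cite[Proposition~10.18 and Corollary~10.20]{BBbook}, with the case $f,f_j\in\Np(X)$ going back to \cite{KiSh-polar} and \cite{Sh-conv}. So an acceptable submission would either cite these results, as the paper does, or reconstruct their proofs: show that the obstacle solutions $u_j$ decrease and are bounded in $\Np(\Om)$, identify their limit as the lsc-regularized solution of the $\K_{f,f}$-obstacle problem using $f_j\to f$ in $\Np(\Om)$, lower semicontinuity of the \p-energy and uniqueness of regularized solutions, and then deduce the locally uniform convergence of the \p-harmonic functions $\oHp f_j$. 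None of this appears in your proposal, so the proof of the statement in question is entirely missing. (Whether your sketch of Theorem~\ref{thm-Newt-resolve-Omm} is itself viable is a separate matter; it is not the statement under review.)
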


\begin{proof}[Proof of Theorem~\ref{thm-Newt-resolve-Omm}]
Assume first that $f \ge 0$.
Extend $\oHp f$ to $\clOmm$ by letting $Hf:=f$ on $\bdym\Om$.
We first show that $\oHp f$ is $\bCp(\,\cdot\,;\Omm)$-quasi\-con\-tin\-u\-ous.
Let $h=f-\oHp f\in \Np_0(\Omm)$,  with $h \equiv 0$ on $\bdym \Om$. 
Proposition~\ref{prop-Np0-qcont-Omm} shows that 
$h$ is $\bCp(\,\cdot\,;\Omm)$-quasi\-con\-tin\-u\-ous. Thus,
$\oHp f=f+h$  is also $\bCp(\,\cdot\,;\Omm)$-quasi\-con\-tin\-u\-ous on 
$\clOmm$, by the subadditivity of the $\bCp(\,\cdot\,;\Omm)$-capacity, see 
Proposition~\ref{prop-Cp}. Hence we can find relatively open subsets 
$G_j \subset\clOmm$, $j=1,2,\ldots$, such that $\bCp(G_j;\Omm) < 2^{-jp}$
and such that $\oHp f|_{\clOmm \setm G_j}$ is continuous.
Let $U_k=\bigcup_{j=k+1}^\infty G_j$, $k=1,2,\ldots$.
Then $\{U_k\}_{k=1}^\infty$  is a decreasing sequence 
of relatively open subsets of $\clOmm$
such that $\bCp(U_k;\Omm) < 2^{-kp}$ 
and $\oHp f|_{\clOmm \setm U_k}$ is  continuous.
 
Consider the decreasing sequence
of nonnegative functions $\{\psi_j\}_{j=1}^\infty$ given by
Lemma~\ref{lem-Newt} with respect to this sequence of sets.
Let $f_j = \oHp f + \psi_j$ (which is only defined in $\Om$)
and let $\phi_j$ be the lower semicontinuously regularized solution 
of the $\K_{f_j,f_j}$-obstacle problem.

For positive intergers $m$, by Lemma~\ref{lem-Newt}, 
\begin{equation} \label{N1}
    f_j \ge \psi_j \ge m \quad \text{on }U_{m+j}\cap\Om.
\end{equation}
Let $\eps >0$ and $x \in \bdym \Om$.  If $x\notin U_{m+j}$, then
by the continuity of $\oHp f|_{\clOmm \setm U_{m+j}}$
there is a neighbourhood $V_x$ of $x$ in $\clOmm$ such that 
\begin{equation} \label{N2}
 f_j(y) \ge \oHp f(y) \ge \oHp f(x) - \eps = f(x) -\eps
 \quad \text{for } y \in (V_x\cap\Om) \setm U_{m+j}.
\end{equation}
Combining \eqref{N1} and \eqref{N2} we see that for 
$x\in\partial_M\Om\setminus U_{m+j}$,
\begin{equation}   \label{eq-fj-ge-m}
    f_j \ge \min\{f(x)-\eps,m\}
       \quad \text{in  }V_x\cap\Om.
\end{equation}
On the other hand, if $x\in U_{m+j}$, then setting $V_x=U_{m+j}$, we see
by~\eqref{N1} that~\eqref{eq-fj-ge-m} holds as well.
As a solution to the $\K_{f_j,f_j}$-obstacle problem,  $\phi_j$ is 
lower semicontinuously regularized and $\phi_j \ge f_j$ q.e.
It follows that $\phi_j(y) \ge \min\{f(x)-\eps,m\}$
for \emph{every} $y \in V_x \cap \Om$. Hence 
\[ 
   \liminf_{\Om \ni y \dMto x} \phi_j(y) \ge \min\{f(x)-\eps,m\}.
\]
Letting $\eps \to 0$ and $m \to \infty$, we see that
\[
   \liminf_{\Om \ni y \dMto x} \phi_j(y) \ge f(x) \quad \text{for all } x \in \bdym \Om.
\]
As $\phi_j$ is superharmonic, it follows that $\phi_j \in \UU_f(\Omm)$,
and hence that $\phi_j \ge \uHpind{\Omm} f$.

Since $\oHp f$ clearly is a solution of the $\K_{\oHp f,\oHp f}$-obstacle problem,
we see by Proposition~\ref{prop-3.2} that $\{\phi_{j}\}_{j=1}^\infty$ 
decreases q.e.\ to $\oHp f$. Hence $\uHpind{\Omm} f \le \oHp f$ q.e.\ in $\Om$.

Next, let $f \in \Np(\Om)$ be arbitrary. Then by~\eqref{eq-f1<f2}, 
Proposition~\ref{prop-3.2}, and the above argument,
\[
      \uHpind{\Omm} f  \le \lim_{m \to -\infty} \uHpind{\Omm} \max\{f,m\}
          \le \lim_{m \to -\infty} \oHp \max\{f,m\}
          = \oHp f  \quad \text{q.e.\ in }\Om.
\]
Since both $\uHpind{\Omm} f$ and $\oHp f$ are continuous, 
we have $\uHpind{\Omm} f \le \oHp f$ everywhere in $\Om$.
It then follows from Corollary~\ref{cor-uHp-lHp} that 
\[
   \lHpind{\Omm} f = - \uHpind{\Omm} (-f) \ge - \oHp (-f) = \oHp f
       \ge \uHpind{\Omm} f \ge \lHpind{\Omm} f,
\] 
and hence that $\oHp f = \lHpind{\Omm} f = \uHpind{\Omm} f$.
\end{proof}

\begin{remark} \label{rmk-prime-end}
It is shown in Adamowicz--Bj\"orn--Bj\"orn--Shan\-mu\-ga\-lin\-gam~\cite{ABBSprime}
that if $\Om$ is finitely connected at the boundary, then
$\clOmm$ is homeomorphic to the prime end closure 
$\clOmP$ of $\Om$, using the definition of prime ends therein.
Since in this section, and the next, we only use the topology
on $\clOmm$ while the Newtonian spaces (and thus also the involved capacities) are 
only with respect to  $\Om=\Omm$ (and not $\overline{\Om}$), the results in these 
two sections can equivalently be formulated in terms of $\clOmP$ 
when $\Om$ is finitely connected at the boundary.
\end{remark}

\section{Resolutivity of functions on \texorpdfstring{$\partial_M\Om$}{}}
\label{sect-bdyM-Om}

\emph{In addition to the standing assumptions described at the end of
Section~\ref{sect-newcap}, we assume in this section,
as in Section~\ref{sect-Perron-Omm}, that\/ $\Om$ is a 
bounded domain which is finitely connected at the boundary
and that  $\Cp(X \setm \Om)>0$.}

\medskip

We now deduce some consequences of Theorem~\ref{thm-Newt-resolve-Omm}.

\begin{prop} \label{prop-Np-invariance-Omm}
Assume that $f: \clOmm \to \eR$ is $\bCp(\,\cdot\,;\Omm)$-quasi\-con\-tin\-u\-ous
and that $f|_{\Om} \in \Np(\Om)$. Assume further that 
$h: \bdym \Om \to \eR$ is zero  $\bCp(\,\cdot\,;\Omm)$-q.e.,
i.e.\ $\bCp(\{x \in \bdym \Om : h(x) \ne 0\};\Omm)=0$.
Then   $f+h$ is resolutive with respect to\/ $\Omm$ and
\[
   \Hpind{\Omm} (f+h) = \Hpind{\Omm} f.
\]
\end{prop}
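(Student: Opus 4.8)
The plan is to reduce the perturbation statement to the resolutivity result Theorem~\ref{thm-Newt-resolve-Omm} together with the comparison principle (Corollary~\ref{cor-uHp-lHp} via Proposition~\ref{prop-comparison}), by showing that adding a function $h$ which vanishes $\bCp(\,\cdot\,;\Omm)$-q.e.\ on $\bdym\Om$ changes neither the upper nor the lower Perron solution. The key device is to produce, for each $\eps>0$, a nonnegative superharmonic function $w_\eps$ on $\Omm$, small in an appropriate sense, with $\liminf_{\Om\ni y\dMto x}w_\eps(y)=\infty$ at every point $x$ of the exceptional set $Z:=\{x\in\bdym\Om:h(x)\ne0\}$, and with $w_\eps$ itself tending to $0$ q.e.\ (or converging to $0$ locally uniformly) as $\eps\to0$. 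This is exactly the kind of ``capacitary potential'' construction used in \cite{BBS2}, but here it must be carried out with respect to $\bCp(\,\cdot\,;\Omm)$ rather than the ordinary Sobolev capacity; since $\bCp(Z;\Omm)=0$, Proposition~\ref{prop-outercap} (outer capacity, applicable to $\bCp(\,\cdot\,;\Omm)$ because $\Np(\Omm)=\Np(\Om)$ and all functions in $\Np(\Om)$ are quasicontinuous) gives relatively open sets $U_k\supset Z$ in $\clOmm$ with $\bCp(U_k;\Omm)<2^{-kp}$.

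Concretely, first I would apply Lemma~\ref{lem-Newt} to the sequence $\{U_k\}$ to get a decreasing sequence of nonnegative $\psi_j\in\Np(\Om)$ with $\|\psi_j\|_{\Np(\Om)}<2^{-j}$ and $\psi_j\ge k-j$ on $U_k\cap\Om$. Let $w_j$ be the lower semicontinuously regularized solution of the $\K_{\psi_j,\psi_j}$-obstacle problem; then $w_j$ is superharmonic on $\Omm$, nonnegative, and since $\psi_j\ge k-j$ on $U_k\cap\Om$ with $U_k$ relatively open containing $Z$, the argument in the proof of Theorem~\ref{thm-Newt-resolve-Omm} (combine the obstacle bound $w_j\ge\psi_j$ q.e.\ with lower semicontinuous regularization) shows $\liminf_{\Om\ni y\dMto x}w_j(y)=\infty$ for every $x\in Z$. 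By Proposition~\ref{prop-3.2}, since $\psi_j\to0$ in $\Np(\Om)$, the $w_j$ decrease q.e.\ to $\oHp 0=0$, hence (being \p-harmonic, or by Harnack/local uniform convergence of regularized obstacle solutions) decrease locally uniformly to $0$ in $\Om$.

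Now I would run the standard comparison argument. Let $f$ be as in the hypothesis; by Theorem~\ref{thm-Newt-resolve-Omm}, $f$ is resolutive with $\Hpind{\Omm}f=\oHp f$. Given any $u\in\UU_f(\Omm)$ and any $j$, the function $u+w_j$ is superharmonic, bounded below, and satisfies $\liminf_{\Om\ni y\dMto x}(u+w_j)(y)\ge f(x)+h(x)$ for all $x\in\bdym\Om$: at points of $Z$ the $w_j$ term forces the liminf to be $+\infty\ge f(x)+h(x)$ (using that $f$ is finite-valued there after the usual truncation, handled as in Theorem~\ref{thm-Newt-resolve-Omm} by first assuming $f\ge0$ and reducing the general case via $\max\{f,m\}$ and monotonicity~\eqref{eq-f1<f2}), while off $Z$ we have $h=0$ so the inequality is just the one defining $\UU_f(\Omm)$. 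Hence $u+w_j\in\UU_{f+h}(\Omm)$, so $\uHpind{\Omm}(f+h)\le u+w_j$; taking the infimum over $u$ and then letting $j\to\infty$ with $w_j\to0$ locally uniformly gives $\uHpind{\Omm}(f+h)\le\uHpind{\Omm}f$. Replacing $f$ by an arbitrary representative and $h$ by $h$ itself, the same argument applied with the roles interchanged (write $f=(f+h)+(-h)$, and note $-h$ is also zero $\bCp(\,\cdot\,;\Omm)$-q.e.) yields $\uHpind{\Omm}f\le\uHpind{\Omm}(f+h)$, so these are equal. Applying this to $-f$ and $-h$ and using $\lHpind{\Omm}g=-\uHpind{\Omm}(-g)$ gives the matching statement for lower solutions; since $f$ is resolutive, $\uHpind{\Omm}(f+h)=\uHpind{\Omm}f=\oHp f=\lHpind{\Omm}f=\lHpind{\Omm}(f+h)$, so $f+h$ is resolutive and $\Hpind{\Omm}(f+h)=\Hpind{\Omm}f$.

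The main obstacle I anticipate is the construction of the exhausting superharmonic ``potentials'' $w_j$ with the blow-up property at $Z$ while keeping control as $\eps\to0$: this is where the new capacity $\bCp(\,\cdot\,;\Omm)$ genuinely enters, since one must invoke its outer-regularity (Proposition~\ref{prop-outercap}) and Lemma~\ref{lem-Newt} to translate $\bCp(Z;\Omm)=0$ into admissible Sobolev functions, and then use Proposition~\ref{prop-3.2} to pass to the limit. A secondary technical point is the truncation reduction to $f\ge0$ (and finiteness of $f$ q.e.\ on $\bdym\Om$, which follows from $\bCp(\,\cdot\,;\Omm)$-quasicontinuity), exactly as in the proof of Theorem~\ref{thm-Newt-resolve-Omm}; once that is in place the comparison step is routine.
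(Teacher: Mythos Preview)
Your argument is essentially correct, but it takes a considerably longer route than the paper's proof. The paper's observation is simply this: extend $h$ by $0$ to $\Om$; then $h$ is $\bCp(\,\cdot\,;\Omm)$-quasicontinuous on $\clOmm$ (its support has zero capacity, so the outer-capacity property of Proposition~\ref{prop-outercap} gives open neighbourhoods of arbitrarily small capacity), and by subadditivity so is $f+h$. Since $(f+h)|_\Om=f|_\Om\in\Np(\Om)$, Theorem~\ref{thm-Newt-resolve-Omm} applies \emph{directly} to $f+h$, yielding $\Hpind{\Omm}(f+h)=\oHp(f+h)=\oHp f=\Hpind{\Omm}f$. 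That is the entire proof.

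Your approach instead rebuilds the capacitary-potential machinery (Lemma~\ref{lem-Newt}, obstacle solutions $w_j$ blowing up on $Z$, comparison in the Perron classes) that is already packaged inside Theorem~\ref{thm-Newt-resolve-Omm}. This is the classical perturbation strategy from \cite{BBS2} and it works here, but it duplicates effort: once Theorem~\ref{thm-Newt-resolve-Omm} is available, there is no need to reopen its proof. The advantage of your route is that it is more modular---it would survive in settings where $f+h$ might not itself fall back into the class covered by the resolutivity theorem---but in the present situation the paper's direct application is both shorter and cleaner. Two minor points on your version: the obstacle solutions $w_j$ are superharmonic but not \p-harmonic, so Proposition~\ref{prop-3.2} only gives $w_j\searrow0$ q.e., not locally uniformly; this suffices since the resulting inequality $\uHpind{\Omm}(f+h)\le\uHpind{\Omm}f$ then holds q.e.\ between continuous functions, hence everywhere. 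And the truncation reduction to $f\ge0$ is unnecessary in your comparison step, since $u\in\UU_f(\Omm)$ is bounded below and $w_j$ blows up at $Z$, forcing $\liminf(u+w_j)=\infty$ there regardless of the sign or finiteness of $f+h$.
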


\begin{proof}
Extend $h$ to $\Om$ by letting $h=0$ in $\Om$. 
Then $h$ is $\bCp(\,\cdot\,;\Omm)$-quasi\-con\-tin\-u\-ous, by (the $\Omm$ version of) 
Proposition~\ref{prop-outercap}.  The subadditivity of the $\bCp(\,\cdot\,;\Omm)$-capacity 
shows that also $f+h$ is $\bCp(\,\cdot\,;\Omm)$-quasi\-con\-tin\-u\-ous. Moreover 
$h \in \Np(\Om)$. 

Since $f+h=f$ in $\Om$ we have $H(f+h) = Hf$.
Theorem~\ref{thm-Newt-resolve-Omm} applied to both $f$ and $f+h$ 
shows that $f+h$ is resolutive with respect to $\Omm$ and that
\[
         \Hpind{\Omm} (f+h) = \oHp (f+h) = \oHp f = \Hpind{\Omm} f.  \qedhere
\]
\end{proof}

\begin{thm} \label{thm-cts-resolve-Omm}
Let $f \in C(\bdym \Om)$ and $h: \bdym \Om \to \eR$ be a function which is  zero 
$\bCp(\,\cdot\,;\Omm)$-q.e.\ on $\bdym \Om$. Then
$f$ and $f+h$ are resolutive with respect to\/ $\Omm$ and
\[
    \Hpind{\Omm} (f+h)  = \Hpind{\Omm} f.
\]
\end{thm}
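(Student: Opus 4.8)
The plan is to reduce Theorem~\ref{thm-cts-resolve-Omm} to Theorem~\ref{thm-Newt-resolve-Omm} (or rather to Proposition~\ref{prop-Np-invariance-Omm}) by exhibiting, for a given $f \in C(\bdym\Om)$, a function $\tilde{f}$ on $\clOmm$ that agrees with $f$ on $\bdym\Om$, is $\bCp(\,\cdot\,;\Omm)$-quasicontinuous, and whose restriction to $\Om$ lies in $\Np(\Om)$. Once this is done, writing $h' = (f - \tilde f)|_{\bdym\Om} + h = h$ (since $f = \tilde f$ on $\bdym\Om$), we have $f + h = \tilde f + h$ on $\bdym\Om$, and Proposition~\ref{prop-Np-invariance-Omm} applied with this $\tilde f$ and the null-function $h$ gives simultaneously the resolutivity of $f = \tilde f + 0$ and of $f + h = \tilde f + h$, together with the equality $\Hpind{\Omm}(f+h) = \Hpind{\Omm}\tilde f = \Hpind{\Omm} f$. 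So the whole theorem comes down to the construction of $\tilde f$.

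To build $\tilde f$, I would first reduce to the case $f \ge \de > 0$ by adding a large constant (constants extend trivially to all of $\clOmm$ and are in $\Np(\Om)$ since $\Om$ is bounded, and adding a constant to the boundary data just shifts all Perron solutions by that constant). Now the standard device: since $\clOmm$ is compact (Theorem~\ref{thm-clOmm-cpt}, using finite connectedness at the boundary) and $\bdym\Om$ is closed in it, extend $f$ to a continuous function on all of $\clOmm$ by Tietze, still bounded below by $\de$ and above by some $M$. The issue is that this continuous extension need not have Newtonian restriction to $\Om$. To fix this I would approximate: pull back via $\Phi$ is not available because we want a Newtonian function on $\Om$, so instead I would solve a suitable obstacle or Dirichlet problem, or more simply approximate $f$ uniformly on $\clOmm$ by Lipschitz functions $f_i$ (with respect to the metric $\dM$ on $\clOmm$); each $f_i$ restricted to $\Omm = (\Om, \dM)$ is Lipschitz and bounded, hence lies in $\Np(\Omm) = \Np(\Om)$ (bounded Lipschitz functions on a bounded space are Newtonian, with the Lipschitz constant as an upper gradient and finite measure). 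A Lipschitz function on $\clOmm$ is automatically continuous there, in particular $\bCp(\,\cdot\,;\Omm)$-quasicontinuous. So each $f_i$ is an admissible input for Theorem~\ref{thm-Newt-resolve-Omm}, giving $\Hpind{\Omm} f_i = \oHp f_i$.

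Then I would pass to the limit: if $f_i \to f$ uniformly on $\bdym\Om$, then by \eqref{eq-f1<f2} and monotonicity of Perron solutions under adding constants, $\uHpind{\Omm} f_i \to \uHpind{\Omm} f$ and $\lHpind{\Omm} f_i \to \lHpind{\Omm} f$ uniformly in $\Om$ (the standard estimate $\uHpind{\Omm}(f_i) - \|f - f_i\|_\infty \le \uHpind{\Omm} f \le \uHpind{\Omm}(f_i) + \|f - f_i\|_\infty$, which follows from \eqref{eq-f1<f2} and the fact that adding a constant $c$ to the data adds $c$ to the solution). Since $\uHpind{\Omm} f_i = \lHpind{\Omm} f_i$ for every $i$, taking limits yields $\uHpind{\Omm} f = \lHpind{\Omm} f$, i.e.\ $f$ is resolutive with respect to $\Omm$. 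This handles the resolutivity of $f$ alone; note it does not itself produce a Newtonian $\tilde f$ agreeing with $f$, so for the full statement I would then argue slightly differently: having established resolutivity of each $f \in C(\bdym\Om)$, I would combine it with the $h$ perturbation. For the perturbed statement, since $h = 0$ $\bCp(\,\cdot\,;\Omm)$-q.e.\ on $\bdym\Om$, I would show $\uHpind{\Omm}(f+h) = \uHpind{\Omm} f$ directly: given $u \in \UU_f(\Omm)$ and the $\psi_j$ from Lemma~\ref{lem-Newt} applied to relatively open sets $U_k \supset \{h \ne 0\}$ with $\bCp(U_k;\Omm)$ small (available by outer capacity, Proposition~\ref{prop-outercap} in the $\Omm$ version, since the q.e.-zero set has capacity zero), the functions $u + \psi_j$ lie in $\UU_{f+h}(\Omm)$ because $\psi_j$ blows up near $\{h \ne 0\}$ and $\psi_j \ge 0$ elsewhere; letting $j \to \infty$ and using that $\psi_j \to 0$ in $\Np(\Om)$ hence (a subsequence) q.e.\ and the superharmonic convergence gives $\uHpind{\Omm}(f+h) \le \uHpind{\Omm} f$, and the reverse is symmetric via $-f, -h$. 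Combined with resolutivity of $f$ this finishes the proof.

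The main obstacle I anticipate is the last point --- getting the $h$-invariance cleanly without re-running the full machinery of Theorem~\ref{thm-Newt-resolve-Omm}; the cleanest route is probably to not prove resolutivity of $f$ separately at all, but instead to observe that a uniform Lipschitz approximant $f_i$ is already a legitimate $\bCp(\,\cdot\,;\Omm)$-quasicontinuous function with Newtonian restriction to $\Om$, so Proposition~\ref{prop-Np-invariance-Omm} applies to $f_i$ and to $f_i + (f - f_i + h)$ --- but $f - f_i + h$ is not q.e.-zero, only small, so that does not immediately work either. Therefore the honest structure is: (i) prove resolutivity of continuous $f$ by the uniform-Lipschitz-approximation limit argument above; (ii) prove the $h$-invariance $\Hpind{\Omm}(f+h) = \Hpind{\Omm} f$ for resolutive $f$ and q.e.-zero $h$ by the $\psi_j$-perturbation argument, which is a simplified excerpt of the proof of Theorem~\ref{thm-Newt-resolve-Omm} and does not need $f$ to be Newtonian. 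Steps (i) and (ii) together give the theorem, and (ii) is where the care with $\bCp$-capacity, Lemma~\ref{lem-Newt}, and superharmonic convergence is concentrated.
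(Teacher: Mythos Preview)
Your step~(i) --- uniform approximation of $f$ by Lipschitz $f_j$ on the compact $\clOmm$, followed by the sandwich estimate $\uHpind{\Omm} f_j - \|f-f_j\|_\infty \le \uHpind{\Omm} f \le \uHpind{\Omm} f_j + \|f-f_j\|_\infty$ and passage to the limit --- is exactly what the paper does, and is correct.

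Your step~(ii), however, has a genuine gap. You propose that for $u \in \UU_f(\Omm)$ the function $u + \psi_j$ lies in $\UU_{f+h}(\Omm)$. But membership in $\UU_{f+h}(\Omm)$ requires superharmonicity, and the sum of a superharmonic function with an arbitrary nonnegative $\Np(\Om)$ function is \emph{not} superharmonic in general (the $\psi_j$ from Lemma~\ref{lem-Newt} are just sums of admissible capacity test functions, with no superharmonicity). In the proof of Theorem~\ref{thm-Newt-resolve-Omm} this is exactly why one adds $\psi_j$ to $\oHp f$ and then \emph{solves an obstacle problem} to produce a genuinely superharmonic $\phi_j$; that step needs $\oHp f + \psi_j \in \Np(\Om)$, which is unavailable for an arbitrary superharmonic $u$.

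The paper avoids this entirely by a much simpler observation that you came close to but dismissed with the wrong decomposition. One does not write $f+h = f_j + (f-f_j+h)$; instead one applies Proposition~\ref{prop-Np-invariance-Omm} to each Lipschitz $f_j$ with the \emph{same} $h$, obtaining $\Hpind{\Omm}(f_j+h) = \Hpind{\Omm} f_j$ directly. Since $(f_j+h)-(f+h) = f_j - f \to 0$ uniformly on $\bdym\Om$, the very same sandwich argument used in step~(i) shows that $\uHpind{\Omm}(f_j+h) \to \uHpind{\Omm}(f+h)$ and $\lHpind{\Omm}(f_j+h) \to \lHpind{\Omm}(f+h)$ uniformly. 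Passing to the limit in $\Hpind{\Omm}(f_j+h) = \Hpind{\Omm} f_j$ then gives resolutivity of $f+h$ and the equality $\Hpind{\Omm}(f+h) = \Hpind{\Omm} f$ in one stroke, with no need for a separate $\psi_j$-argument.
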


\begin{proof}
For each $j=1,2,\ldots$, there is a Lipschitz function 
$f_j\in\Lip(\partial_M\Om)$ such that $f-1/j \le f_j\le f+1/j $ on $\bdym \Om$.
We can extend $f_j$ to be a Lipschitz function on $\clOmm$, so
$f_j\in\Lip(\clOmm)\subset\Np(\clOmm)$. 
It follows directly from Definition~\ref{def-Perron}
that $\uHpind{\Omm} f -1/j \le \uHpind{\Omm} f_j \le \uHpind{\Omm} f +1/j$, and hence
$\uHpind{\Omm} f_j \to \uHpind{\Omm} f$ uniformly, as $j \to \infty$.
The uniform convergences of $\lHpind{\Omm} f_j$, $\uHpind{\Omm} (f_j+h)$ and
$\lHpind{\Omm} (f_j +h)$ are proved in the same way. As $f_j \in \Np(\Om)$,
we have by Proposition~\ref{prop-Np-invariance-Omm}
that $\Hpind{\Omm} (f_j+h) =  \Hpind{\Omm} f_j$.
Letting $j \to \infty$ completes the proof.
\end{proof}

As a  consequence of these two results we obtain the  following uniqueness result.

\begin{cor} \label{cor-unique-Newt-Omm}
Let either $f: \clOmm \to \R$ be a bounded 
$\bCp(\,\cdot\,;\Omm)$-quasi\-con\-tin\-u\-ous function such
that $f|_{\Om} \in \Np(\Om)$, or $f \in C(\bdym \Om)$. 
Let $u$ be a bounded \p-harmonic function in\/ $\Om$. If  there is a set 
$E \subset \bdym \Om$ with $\bCp(E;\Omm)=0$ such that
\[
    \lim_{\Om \ni y \dMto x} u(y) = f(x) \quad \text{for all } x \in \bdym \Om \setm E,
\]
then $u=\Hpind{\Omm} f$.
\end{cor}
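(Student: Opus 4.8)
The plan is to reduce the uniqueness statement to resolutivity together with the comparison principle (Proposition~\ref{prop-comparison}). First I would invoke the appropriate resolutivity result: if $f$ is bounded, $\bCp(\,\cdot\,;\Omm)$-quasicontinuous with $f|_\Om \in \Np(\Om)$, then Theorem~\ref{thm-Newt-resolve-Omm} gives that $f$ is resolutive with respect to $\Omm$; if instead $f \in C(\bdym\Om)$, then Theorem~\ref{thm-cts-resolve-Omm} does the same. In either case $\Hpind{\Omm} f = \uHpind{\Omm} f = \lHpind{\Omm} f$ is well defined.

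Next I would deal with the exceptional set $E$. Define $h: \bdym\Om \to \eR$ by $h = f$ on $E$ and $h = 0$ on $\bdym\Om \setm E$; then $h$ is zero $\bCp(\,\cdot\,;\Omm)$-q.e. In the Sobolev/quasicontinuous case, Proposition~\ref{prop-Np-invariance-Omm} applied to $f - h$ and $h$ (note $f = (f-h) + h$ and $f - h$ is still bounded and $\bCp(\,\cdot\,;\Omm)$-quasicontinuous with $(f-h)|_\Om = f|_\Om \in \Np(\Om)$) yields that $f$ is resolutive and $\Hpind{\Omm} f = \Hpind{\Omm}(f - h)$; more to the point, I would instead directly compare $u$ against the boundary function $\tilde f$ which agrees with $f$ off $E$ and is set to $+\infty$ (resp.\ $-\infty$) on $E$. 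Concretely: since $u$ is bounded, say $|u| \le M$, consider the function $f_+$ on $\bdym\Om$ equal to $f$ on $\bdym\Om\setm E$ and equal to $+\infty$ on $E$; then $u \in \LL_{f_+}(\Omm)$ is false in general, so instead I use $f_- := f\chi_{\bdym\Om\setm E} - \infty\cdot\chi_E$ won't work either with the boundedness of competitors. The clean route is: let $g = f\chi_{\bdym\Om\setm E}$, which differs from $f$ only on $E$, hence $g$ is $\bCp(\,\cdot\,;\Omm)$-quasicontinuous (being $f$ plus a function that is zero q.e., using subadditivity as in the proof of Proposition~\ref{prop-Np-invariance-Omm}) and resolutive with $\Hpind{\Omm} g = \Hpind{\Omm} f$ by that proposition (or by Theorem~\ref{thm-cts-resolve-Omm} plus the same invariance in the continuous case).

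Then I would show $u = \Hpind{\Omm} g$, which gives $u = \Hpind{\Omm} f$. For the inequality $u \le \uHpind{\Omm} g$: fix $\eps > 0$ and consider $v = u + \eps w$, where $w$ is a nonnegative superharmonic function on $\Om$ with $\liminf_{\Om \ni y \dMto x} w(y) = \infty$ for all $x \in E$. Such a $w$ exists because $\bCp(E;\Omm) = 0$: by the outer-capacity property (Proposition~\ref{prop-outercap}, in its $\Omm$ version) choose relatively open $U_k \supset E$ in $\clOmm$ with $\bCp(U_k;\Omm) < 2^{-kp}$, apply Lemma~\ref{lem-Newt} to get $\psi_j$ with $\|\psi_j\|_{\Np(\Om)} < 2^{-j}$ and $\psi_j \ge k - j$ on $U_k \cap \Om$, and take $w$ to be the lower semicontinuously regularized solution of the $\K_{\psi_1,\psi_1}$-obstacle problem (which is superharmonic, nonnegative, and blows up along $E$ by the argument in the proof of Theorem~\ref{thm-Newt-resolve-Omm}). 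Then for $x \in \bdym\Om\setm E$ we have $\limsup_{\Om \ni y \dMto x} v(y) \le u$-limit $+ 0 = g(x)$ wait — I want $v$ to be a \emph{sub}harmonic competitor for $g$, so instead take $v = u - \eps w$: it is subharmonic (difference of a \p-harmonic function and a nonnegative superharmonic function), bounded above by $M$, and $\limsup_{\Om\ni y\dMto x} v(y) \le f(x) = g(x)$ for $x \notin E$ (using the hypothesis $\lim u = f$ there and $w \ge 0$), while for $x \in E$, $\limsup v(y) \le M - \infty = -\infty \le g(x) = 0$. Hence $v \in \LL_g(\Omm)$, so $u - \eps w \le \lHpind{\Omm} g$ in $\Om$; letting $\eps \to 0$ (with $w$ fixed and finite q.e.\ in $\Om$, hence finite on a dense set, and by continuity of \p-harmonic functions / lower semicontinuity arguments one gets it everywhere) gives $u \le \lHpind{\Omm} g \le \Hpind{\Omm} g$. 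The reverse inequality $u \ge \uHpind{\Omm} g$ is symmetric, using $u + \eps w \in \UU_g(\Omm)$. Combining, $u = \Hpind{\Omm} g = \Hpind{\Omm} f$.

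\textbf{Main obstacle.} The delicate point is the construction and use of the barrier $w$: one must produce a single nonnegative superharmonic function on $\Om$ that is finite q.e.\ but has $\dM$-boundary limit $+\infty$ at every point of the capacity-zero set $E$, and then justify passing to the limit $\eps \to 0$ in $u \mp \eps w$ to recover a pointwise (everywhere in $\Om$) comparison with the Perron solution — the subtlety being that $w$ may be $+\infty$ on a set of capacity zero in $\Om$, so one argues at points where $w < \infty$ (a dense set) and then uses continuity of the \p-harmonic functions $u$ and $\Hpind{\Omm} f$ to conclude everywhere. The quasicontinuity bookkeeping for $g = f\chi_{\bdym\Om\setm E}$ and the verification that it stays in the hypothesis class of Theorem~\ref{thm-Newt-resolve-Omm} (or that we may instead lean on Proposition~\ref{prop-Np-invariance-Omm}) is routine but must be stated carefully; everything else is a direct application of the comparison principle and the resolutivity theorems already proved.
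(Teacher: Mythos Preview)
Your approach has a genuine gap: the claim that $v = u - \eps w$ is subharmonic because it is ``the difference of a \p-harmonic function and a nonnegative superharmonic function'' is false when $p \ne 2$. The \p-Laplacian is nonlinear, so sums and differences of \p-super/subharmonic functions need not remain in the right class; there is no reason for $u - \eps w$ to lie in $\LL_g(\Omm)$ (nor for $u + \eps w$ to lie in $\UU_g(\Omm)$). This breaks both directions of your comparison, and the barrier construction, while correct as a construction of a superharmonic function blowing up on $E$, cannot be glued to $u$ additively in the nonlinear setting.

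The paper avoids this entirely by a much shorter argument that uses the invariance results you already cite, but applies them to the \emph{boundary data} rather than trying to modify $u$. After normalizing so that $0 \le u \le 1$ and $0 \le f \le 1$, one observes that $u$ itself (unmodified) lies in $\UU_{f-\chi_E}(\Omm)$: off $E$ one has $\liminf u = f = f - \chi_E$, and on $E$ one has $\liminf u \ge 0 \ge f(x) - 1$. Hence $u \ge \uHpind{\Omm}(f-\chi_E)$. But $\chi_E$ vanishes $\bCp(\,\cdot\,;\Omm)$-q.e., so Proposition~\ref{prop-Np-invariance-Omm} (or Theorem~\ref{thm-cts-resolve-Omm} in the continuous case) gives $\uHpind{\Omm}(f-\chi_E) = \Hpind{\Omm} f$. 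The reverse inequality comes from $u \in \LL_{f+\chi_E}(\Omm)$ in the same way. No barrier is needed: the perturbation is absorbed by the invariance theorem, and $u$ stays \p-harmonic throughout.
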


Note that if the word \emph{bounded} is omitted,
the result becomes false; consider for example, the Poisson 
kernel in the unit disc $B(0,1) \subset \C=\R^2$
with a pole at $1$ but vanishing on $\bdy B(0,1) \setm \{1\}$.

\begin{proof}
By adding a sufficiently large constant to both $f$ and $u$,
and then rescaling them simultaneously we may assume
without loss of generality that $0 \le u \le 1$ and
$0\le f\le 1$. Hence $u \in \UU_{f-\chi_E}(\Omm)$ and 
$u \in \LL_{f+\chi_E}(\Omm)$. Therefore, by 
Proposition~\ref{prop-Np-invariance-Omm} if $f$ is 
$\bCp(\,\cdot\,;\Omm)$-quasi\-con\-tin\-u\-ous and bounded with
$f|_\Om\in\Np(\Om)$, and Theorem~\ref{thm-cts-resolve-Omm} 
in the case when $f\in C(\bdym\Om)$, we see that 
\[
    u \ge \uHpind{\Omm} (f-\chi_E)=\Hpind{\Omm} f = \lHpind{\Omm} (f+\chi_E) \ge u.
\qedhere
\]
\end{proof}

The proofs of the following results are similar to the proof of 
Theorem~\ref{thm-cts-resolve-Omm}, and are left to the reader to verify.

\begin{prop} \label{prop-unif-Omm}
Let $f_j : \bdym \Om \to \eR$, $j=1,2,\ldots$\,, be  
resolutive functions with respect to\/ $\Omm$
and assume that $f_j \to f$ uniformly on $\bdym \Om$.
Then $f$ is resolutive with respect to\/ $\Omm$ and 
$\Hpind{\Omm} f_j \to \Hpind{\Omm} f$ uniformly in\/ $\Om$.
\end{prop}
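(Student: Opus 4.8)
The plan is to mimic the proof of Theorem~\ref{thm-cts-resolve-Omm}, using the fact that uniform limits of upper and lower Perron solutions converge at the same rate as the boundary data. First I would fix $\eps>0$ and use the uniform convergence $f_j\to f$ to find $N$ such that $f-\eps\le f_j\le f+\eps$ on $\bdym\Om$ for all $j\ge N$. From the monotonicity~\eqref{eq-f1<f2} together with the elementary observation that adding a constant $c$ to the boundary data shifts $\uHpind{\Omm}$ by $c$ (which follows directly from Definition~\ref{def-Perron}, since adding $c$ to all competitors in $\UU_f(\Omm)$ gives exactly the competitors for $f+c$), I get
\[
   \uHpind{\Omm} f - \eps \le \uHpind{\Omm} f_j \le \uHpind{\Omm} f + \eps
   \quad\text{and}\quad
   \lHpind{\Omm} f - \eps \le \lHpind{\Omm} f_j \le \lHpind{\Omm} f + \eps
\]
for all $j\ge N$, where a priori $\uHpind{\Omm}f$ and $\lHpind{\Omm}f$ are defined by the formulas in Definition~\ref{def-Perron} even before we know $f$ is resolutive. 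Hence $\uHpind{\Omm} f_j \to \uHpind{\Omm} f$ and $\lHpind{\Omm} f_j \to \lHpind{\Omm} f$ uniformly in $\Om$.

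Next I would exploit the hypothesis that each $f_j$ is resolutive: $\uHpind{\Omm} f_j = \lHpind{\Omm} f_j = \Hpind{\Omm} f_j$. Combining this with the two uniform convergences just established, for $j,k\ge N$ we have
\[
   |\uHpind{\Omm} f - \lHpind{\Omm} f|
     \le |\uHpind{\Omm} f - \Hpind{\Omm} f_j| + |\Hpind{\Omm} f_j - \lHpind{\Omm} f|
     \le 4\eps
\]
pointwise in $\Om$; letting $\eps\to0$ gives $\uHpind{\Omm} f = \lHpind{\Omm} f$, i.e.\ $f$ is resolutive with respect to $\Omm$, and $\Hpind{\Omm} f_j\to\Hpind{\Omm} f$ uniformly follows from the uniform convergence of either $\uHpind{\Omm} f_j$ or $\lHpind{\Omm} f_j$.

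One should double-check that the translation identity $\uHpind{\Omm}(f+c)=\uHpind{\Omm}f+c$ (and its lower analog) is legitimate here: if $u\in\UU_f(\Omm)$ then $u+c$ is still superharmonic and bounded below with $\liminf_{\Om\ni y\dMto x}(u+c)(y)\ge f(x)+c$, so $u+c\in\UU_{f+c}(\Omm)$, and conversely; taking infima gives the identity. This is the only point where any care is needed, and it is entirely routine. I do not anticipate a real obstacle — the whole argument is a short $\eps$-triangle-inequality estimate, which is exactly why the authors relegate it to the reader. I would simply record the above as the proof, perhaps compressing the translation remark into a parenthetical.
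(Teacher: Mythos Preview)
Your proof is correct and follows exactly the approach the paper indicates: it mirrors the proof of Theorem~\ref{thm-cts-resolve-Omm} by sandwiching $f_j$ between $f-\eps$ and $f+\eps$, using monotonicity~\eqref{eq-f1<f2} and the translation identity to get uniform convergence of both upper and lower Perron solutions, and then invoking resolutivity of $f_j$ to identify the limits. The only cosmetic remark is that your $4\eps$ bound is really $2\eps$ (and the stray index $k$ is unused), but this does not affect the argument.
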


\begin{prop} \label{prop-unif-pert-Omm} 
Let $f_j: \clOmm \to \eR$ be $\bCp(\,\cdot\,;\Omm)$-quasi\-con\-tin\-u\-ous functions
 such that $f_j|_{\Om} \in \Np(\Om)$, $j=1,2,\ldots$\,.
Assume also that $f_j \to f$ uniformly on $\bdym \Om$ as $j \to \infty$.
Let $h: \bdym \Om \to \eR$ be a function which is zero 
$\bCp(\,\cdot\,;\Omm)$-q.e.\ on $\bdym \Om$.
Then $f$ and $f+h$ are resolutive with respect to\/ $\Omm$ and
$\Hpind{\Omm} f = \Hpind{\Omm} (f+h)$.
\end{prop}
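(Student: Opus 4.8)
The plan is to imitate the proof of Theorem~\ref{thm-cts-resolve-Omm}, since the statement is a uniform-limit version of Proposition~\ref{prop-Np-invariance-Omm}. First I would fix $\eps>0$ and, for each $j$, invoke the hypothesis $f_j\to f$ uniformly on $\bdym\Om$ to pick $j$ large enough that $\sup_{\bdym\Om}|f_j-f|<\eps$. From the monotonicity~\eqref{eq-f1<f2} of upper Perron solutions and the fact that adding a constant $c$ to the boundary data adds $c$ to $\uHpind{\Omm}(\cdot)$ (an immediate consequence of Definition~\ref{def-Perron}), one gets
\[
   \uHpind{\Omm} f - \eps \le \uHpind{\Omm} f_j \le \uHpind{\Omm} f + \eps
   \quad\text{in }\Om,
\]
and the analogous two-sided bounds for $\lHpind{\Omm} f_j$, for $\uHpind{\Omm}(f_j+h)$ against $\uHpind{\Omm}(f+h)$, and for $\lHpind{\Omm}(f_j+h)$ against $\lHpind{\Omm}(f+h)$. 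These estimates say precisely that all four sequences of Perron solutions converge uniformly in $\Om$ to the corresponding Perron solutions of $f$ (resp.\ $f+h$) as $j\to\infty$.

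Next I would apply Proposition~\ref{prop-Np-invariance-Omm} to each $f_j$: since $f_j$ is $\bCp(\,\cdot\,;\Omm)$-quasicontinuous with $f_j|_\Om\in\Np(\Om)$, and $h$ is zero $\bCp(\,\cdot\,;\Omm)$-q.e.\ on $\bdym\Om$, that proposition gives that $f_j$ and $f_j+h$ are resolutive with respect to $\Omm$ and $\Hpind{\Omm}(f_j+h)=\Hpind{\Omm} f_j$; in particular $\uHpind{\Omm} f_j=\lHpind{\Omm} f_j$ and $\uHpind{\Omm}(f_j+h)=\lHpind{\Omm}(f_j+h)$ for every $j$. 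Letting $j\to\infty$ and using the uniform convergence established above, the upper and lower Perron solutions of $f$ agree, and likewise those of $f+h$; hence $f$ and $f+h$ are resolutive with respect to $\Omm$. Finally, taking the limit in the identity $\Hpind{\Omm}(f_j+h)=\Hpind{\Omm} f_j$ yields $\Hpind{\Omm}(f+h)=\Hpind{\Omm} f$, which is exactly the claim.

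This argument is essentially routine given the machinery already assembled; there is no real obstacle. The only point to be slightly careful about is the very first step—that the Perron operator commutes with adding constants and is monotone—but both facts are recorded right after Definition~\ref{def-Perron} (the monotonicity is~\eqref{eq-f1<f2}, and the constant-shift is immediate because adding a constant to $f$ shifts the competitor classes $\UU_f(\Omm)$ and $\LL_f(\Omm)$ by that constant). One should also note that the same uniform estimate is what drives the convergence $\Hpind{\Omm} f_j\to$ common value, so no separate equicontinuity or Harnack argument is needed. This is indeed the situation the paper flags as ``left to the reader to verify.''
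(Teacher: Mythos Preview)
Your argument is correct and is precisely the approach the paper intends: the paper states that the proof is ``similar to the proof of Theorem~\ref{thm-cts-resolve-Omm}, and left to the reader to verify,'' and you have carried out exactly that verification, replacing the Lipschitz approximants there by the given $f_j$ and invoking Proposition~\ref{prop-Np-invariance-Omm} in place of the direct Newtonian resolutivity used there.
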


\section{Resolutivity of functions on \texorpdfstring{$\partial\Om$}{}}
\label{sect-resol-bdryOm}

\emph{In addition to the standing assumptions described at the end of
Section~\ref{sect-newcap}, we assume in this section that\/ $\Om$ is a 
nonempty bounded open set\/ {\rm (}not necessarily finitely connected 
at the boundary\/{\rm)} and that  $\Cp(X \setm \Om)>0$.}

\medskip

The results in Section~\ref{sect-bdyM-Om} have analogs for
Perron solutions with respect to the ordinary boundary $\bdy\Om$. 
Versions of these counterparts appear in 
Bj\"orn--Bj\"orn--Shan\-mu\-ga\-lin\-gam~\cite{BBS2} and 
Bj\"orn--Bj\"orn~\cite[Chapter~10]{BBbook}
under more restrictive assumptions such as $f\in \Np(X)$
and $f\in \Np(\clOm)$ respectively.
The capacities considered there are $\Cp$ and $\Cp(\,\cdot\,;\clOm)$.
The generalizations below have been made possible by 
the introduction of the new capacity $\bCp(\,\cdot\,;\Om)$ in this paper.
For a comparison of these results see the examples in 
Section~\ref{sect-examples}.

We next list these generalizations without proofs since the verification of these
results follow directly along the lines of 
the proofs of Theorem~\ref{thm-Newt-resolve-Omm}
and the results in Section~\ref{sect-bdyM-Om}.
For readers only interested in the results in this section,
we point out that some details are easier, for the only result
from Sections~\ref{sect-dM} and~\ref{sect-NpOmm}
needed for the results in this section is Proposition~\ref{prop-Np0-qcont-Omm},
which however has a much simpler proof in this case. 
Moreover, there is no need to assume that  $\Om$ is connected or finitely 
connected at the boundary for the results in this section.

\begin{thm} \label{thm-Np-invariance-Om}
Let $f: \clOm \to \eR$ be $\bCp(\,\cdot\,;\Om)$-quasi\-con\-tin\-u\-ous
with $f|_{\Om} \in \Np(\Om)$. Assume that $h: \bdy \Om \to \eR$ is zero 
$\bCp(\,\cdot\,;\Om)$-q.e. Then $f$ and  $f+h$ are resolutive
with respect to\/ $\Om$ and
\[
     \Hpind{\Om} (f+h) = \Hpind{\Om} f = \oHp f.
\]
\end{thm}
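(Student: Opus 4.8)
The plan is to mimic the proof of Theorem~\ref{thm-Newt-resolve-Omm}, replacing the Mazurkiewicz closure $\clOmm$ by the ordinary closure $\clOm$ equipped with the measure $\mu_0$, and the capacity $\bCp(\,\cdot\,;\Omm)$ by $\bCp(\,\cdot\,;\Om)$. The key structural facts making this work are that $\bCp(\,\cdot\,;\Om)$ is countably subadditive and an outer capacity (Propositions~\ref{prop-Cp} and~\ref{prop-outercap}), and the analog of Proposition~\ref{prop-Np0-qcont-Omm}: if $f \in \Np_0(\Om)$ then its zero-extension to $\clOm$ is $\bCp(\,\cdot\,;\Om)$-quasicontinuous. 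As the excerpt remarks, this last fact has a simpler proof than the $\Omm$ version: if $f \in \Np_0(\Om) = \Np_0(\Om;X)$ then $\ft \in \Np(X)$ is quasicontinuous in $X$ by Theorem~\ref{thm-quasicont}\,\ref{qcont}, so for $\eps>0$ there is open $U \subset X$ with $\Cp(U) < \eps$ and $\ft|_{X \setm U}$ continuous; then $U \cap \clOm$ is relatively open in $\clOm$, $f|_{\clOm \setm U}$ is continuous, and $\bCp(U \cap \clOm;\Om) \le \Cp(U) < \eps$ by Lemma~\ref{lem-Phicap-Omm} (second inequality, applied with $\Om$ in place of $\Omm$, which needs no finite connectivity).

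First I would prove the analog of Theorem~\ref{thm-Newt-resolve-Omm}: if $f: \clOm \to \eR$ is $\bCp(\,\cdot\,;\Om)$-quasicontinuous with $f|_\Om \in \Np(\Om)$, then $f$ is resolutive with respect to $\Om$ and $\Hpind{\Om} f = \oHp f$. The argument is the one in the excerpt, verbatim up to notation. Assume first $f \ge 0$; extend $\oHp f$ to $\clOm$ by setting it equal to $f$ on $\bdy \Om$. Then $h := f - \oHp f \in \Np_0(\Om)$, which is $\bCp(\,\cdot\,;\Om)$-quasicontinuous by the fact just established, so $\oHp f = f + h$ is $\bCp(\,\cdot\,;\Om)$-quasicontinuous by subadditivity. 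Pick relatively open $G_j \subset \clOm$ with $\bCp(G_j;\Om) < 2^{-jp}$ and $\oHp f|_{\clOm \setm G_j}$ continuous, set $U_k = \bigcup_{j>k} G_j$, apply the analog of Lemma~\ref{lem-Newt} to get a decreasing sequence $\psi_j$ with $\|\psi_j\|_{\Np(\Om)} < 2^{-j}$ and $\psi_j \ge k-j$ on $U_k \cap \Om$, set $f_j = \oHp f + \psi_j$, let $\phi_j$ be the lsc-regularized solution of the $\K_{f_j,f_j}$-obstacle problem, and run the same boundary estimate (using continuity of $\oHp f|_{\clOm \setm U_{m+j}}$ off $U_{m+j}$ and $\psi_j \ge m$ on $U_{m+j} \cap \Om$) to conclude $\liminf_{\Om \ni y \to x} \phi_j(y) \ge f(x)$ for all $x \in \bdy \Om$, so $\phi_j \in \UU_f(\Om)$ and $\phi_j \ge \uHpind{\Om} f$. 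By Proposition~\ref{prop-3.2}, $\phi_j \downarrow \oHp f$ q.e., giving $\uHpind{\Om} f \le \oHp f$ q.e., hence everywhere by continuity; truncation handles general $f$, and the comparison Corollary~\ref{cor-uHp-lHp} (in its $\Om$ form) closes the loop. The only adjustment is that $\bdy \Om$ carries $\mu_0$-measure zero, which is exactly why the $L^p$ norms match up and why the extension of $\oHp f$ by $f$ on $\bdy\Om$ does not change the Newtonian norm.

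Given this resolutivity theorem, the stated Theorem~\ref{thm-Np-invariance-Om} follows just as Proposition~\ref{prop-Np-invariance-Omm} follows from Theorem~\ref{thm-Newt-resolve-Omm}. Extend $h$ by $0$ on $\Om$; since $\{x : h(x) \ne 0\}$ has $\bCp(\,\cdot\,;\Om)$-capacity zero and $\bCp(\,\cdot\,;\Om)$ is an outer capacity, $h$ is $\bCp(\,\cdot\,;\Om)$-quasicontinuous (its set of discontinuities can be covered by relatively open sets of arbitrarily small capacity), so $f + h$ is $\bCp(\,\cdot\,;\Om)$-quasicontinuous by subadditivity, and $(f+h)|_\Om = f|_\Om \in \Np(\Om)$. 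Applying the resolutivity theorem to both $f$ and $f+h$ gives that both are resolutive with respect to $\Om$ and $\Hpind{\Om}(f+h) = \oHp(f+h) = \oHp f = \Hpind{\Om} f$, where the middle equality holds because $f+h = f$ on $\Om$.

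The main obstacle is the analog of Proposition~\ref{prop-Np0-qcont-Omm} and, behind it, the second inequality of Lemma~\ref{lem-Phicap-Omm} in the $\Om$ setting, i.e.\ $\bCp(E;\Om) \le \Cp(E)$ for $E \subset \clOm$ — one must check the open-set argument there does not secretly use any structural hypothesis beyond what is assumed in this section (it uses only that $\Cp$ is an outer capacity, which holds under the standing assumptions, and does not use connectivity or finite connectivity of $\Om$). Everything else is a faithful transcription of the $\Omm$-proof with $\clOmm \rightsquigarrow \clOm$, $\bdym\Om \rightsquigarrow \bdy\Om$, $\dMto \rightsquigarrow \to$, and the measure $\mu$ on $\clOmm$ replaced by $\mu_0$ on $\clOm$; the compactness of $\clOm$ (used, e.g., in the comparison principle) is automatic since $X$ is proper, so no finite-connectivity assumption is needed.
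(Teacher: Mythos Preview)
Your proposal is correct and follows exactly the approach the paper intends: the paper explicitly omits the proof of Theorem~\ref{thm-Np-invariance-Om}, stating that it ``follow[s] directly along the lines of the proofs of Theorem~\ref{thm-Newt-resolve-Omm} and the results in Section~\ref{sect-bdyM-Om}'', with the only needed ingredient from Section~\ref{sect-NpOmm} being the analog of Proposition~\ref{prop-Np0-qcont-Omm}, which ``has a much simpler proof in this case''. You have supplied precisely that simpler proof (via $\Np_0(\Om)\subset\Np(X)$, quasicontinuity in $X$, and the second inequality of Lemma~\ref{lem-Phicap-Omm}, whose proof indeed uses only that $\Cp$ is an outer capacity), and correctly transcribed the rest of the argument.
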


\begin{prop} \label{prop-cts-resolve-Om}
Let $f \in C(\bdy \Om)$ and let $h$ be a function which is zero 
$\bCp(\,\cdot\,;\Om)$-q.e.\ on $\bdy \Om$. Then 
$f+h$ is resolutive with respect to\/ $\Om$ and
\[
    \Hpind{\Om} (f+h)  = \Hpind{\Om} f.
\]
\end{prop}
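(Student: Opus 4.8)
The plan is to reduce Proposition~\ref{prop-cts-resolve-Om} to Theorem~\ref{thm-Np-invariance-Om} by the same uniform-approximation trick used in the proof of Theorem~\ref{thm-cts-resolve-Omm}. Since the paper explicitly says these results ``follow directly along the lines of the proofs of Theorem~\ref{thm-Newt-resolve-Omm} and the results in Section~\ref{sect-bdyM-Om}'', the expected proof is a routine transcription of the $\Omm$-argument to the $\Om$-setting. So I would first observe that $f\in C(\bdy\Om)$ extends (by, e.g., a McShane--Whitney type extension) to a continuous, indeed Lipschitz, function on $\clOm$, and in particular there exist Lipschitz functions $f_j$ on $\clOm$ with $f-1/j\le f_j\le f+1/j$ on $\bdy\Om$. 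Each $f_j$ restricted to $\Om$ lies in $\Np(\Om)$ (Lipschitz functions on a bounded set are Newtonian), and each $f_j$ is trivially $\bCp(\,\cdot\,;\Om)$-quasicontinuous, being continuous on all of $\clOm$.

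Next I would invoke Theorem~\ref{thm-Np-invariance-Om} applied to each $f_j$ and to $f_j+h$, giving that $f_j$ and $f_j+h$ are resolutive with respect to $\Om$ and $\Hpind{\Om}(f_j+h)=\Hpind{\Om}f_j$. Then, directly from Definition~\ref{def-Perron} and the monotonicity \eqref{eq-f1<f2}, the inequalities $f-1/j\le f_j\le f+1/j$ on $\bdy\Om$ yield
\[
  \uHpind{\Om}f - \tfrac1j \le \uHpind{\Om}f_j \le \uHpind{\Om}f + \tfrac1j,
\]
so $\uHpind{\Om}f_j\to\uHpind{\Om}f$ uniformly on $\Om$, and similarly $\lHpind{\Om}f_j\to\lHpind{\Om}f$, $\uHpind{\Om}(f_j+h)\to\uHpind{\Om}(f+h)$ and $\lHpind{\Om}(f_j+h)\to\lHpind{\Om}(f+h)$ uniformly. (The last two use that adding $h$, a function supported on a $\bCp(\,\cdot\,;\Om)$-null set of $\bdy\Om$, shifts the boundary data of $f_j$ and $f$ in the same uniformly-controlled way.) Since each $f_j$ and each $f_j+h$ is resolutive, the uniform limits $f$ and $f+h$ are resolutive as well (the uniform limit of resolutive functions is resolutive, the $\Om$-analog of Proposition~\ref{prop-unif-Omm}), and passing to the limit in $\Hpind{\Om}(f_j+h)=\Hpind{\Om}f_j$ gives $\Hpind{\Om}(f+h)=\Hpind{\Om}f$.

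The only genuinely non-routine point — the ``hard part'' — is making sure the ingredient Theorem~\ref{thm-Np-invariance-Om} is indeed available in the stated generality, i.e.\ that the $\Omm$-machinery of Section~\ref{sect-Perron-Omm} transfers to $\bdy\Om$ with the capacity $\bCp(\,\cdot\,;\Om)$; but that is exactly what Theorem~\ref{thm-Np-invariance-Om} asserts and we are entitled to assume it. Given that, everything else is the standard sandwich-and-pass-to-the-limit argument, with no obstacle beyond bookkeeping: one must only be slightly careful that the four Perron solutions involved ($\uHpind{\Om}$ and $\lHpind{\Om}$ of $f$ and of $f+h$) all converge uniformly, which follows verbatim as in the proof of Theorem~\ref{thm-cts-resolve-Omm}.
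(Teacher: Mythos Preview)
Your proposal is correct and follows essentially the same approach as the paper: the paper gives no separate proof for Proposition~\ref{prop-cts-resolve-Om} but explicitly states that it follows along the lines of the $\Omm$-arguments in Section~\ref{sect-bdyM-Om}, which is precisely the uniform Lipschitz approximation plus Theorem~\ref{thm-Np-invariance-Om} that you outline. One small wording slip: $f\in C(\bdy\Om)$ need not itself extend to a Lipschitz function on $\clOm$; rather, since $\bdy\Om$ is compact, $f$ can be uniformly approximated by Lipschitz $f_j$ on $\bdy\Om$, and it is these $f_j$ that are then extended by McShane--Whitney --- exactly as in the proof of Theorem~\ref{thm-cts-resolve-Omm}.
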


Note that the resolutivity of $f \in C(\bdy \Om)$ was already obtained in 
Bj\"orn--Bj\"orn--Shan\-mu\-ga\-lin\-gam~\cite[Theorem~6.1]{BBS2}.

\begin{cor} \label{cor-unique-Om}
Let either $f: \clOm \to \R$ be a bounded 
$\bCp(\,\cdot\,;\Om)$-quasi\-con\-tin\-u\-ous function such
that $f|_{\Om} \in \Np(\Om)$, or $f \in C(\bdy \Om)$. Assume also 
that $u$ is a bounded \p-harmonic function in\/ $\Om$ and that there is a set 
$E \subset \bdy \Om$ with $\bCp(E;\Om)=0$ such that 
\[
    \lim_{\Om \ni y \to  x} u(y) = f(x) \quad \text{for all } x \in \bdy \Om \setm E.
\]
Then $u=\Hpind{\Om} f$.
\end{cor}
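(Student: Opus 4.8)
The plan is to mimic the proof of Corollary~\ref{cor-unique-Newt-Omm}, with the ordinary boundary $\bdy\Om$ and the capacity $\bCp(\,\cdot\,;\Om)$ replacing $\bdym\Om$ and $\bCp(\,\cdot\,;\Omm)$. First I would normalize. Since $u$ is bounded and $f$ is bounded (in the case $f\in C(\bdy\Om)$ this is automatic, as $\bdy\Om$ is compact because $X$ is proper and $\Om$ is bounded), we may add a large constant to both $f$ and $u$ and then multiply by a positive factor so that $0\le u\le 1$ on $\Om$ and $0\le f\le 1$ on $\bdy\Om$. This is harmless: $\uHpind{\Om}(af+b)=a\uHpind{\Om}f+b$ for $a>0$ directly from Definition~\ref{def-Perron}, and the hypothesis on the boundary limits, as well as the $\bCp(\,\cdot\,;\Om)$-quasicontinuity and membership $f|_\Om\in\Np(\Om)$, are preserved under these affine operations.

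Next I would place $u$ in the relevant Perron classes. Writing $E$ for the exceptional set and using $0\le u\le1$ together with the hypothesis $\lim_{\Om\ni y\to x}u(y)=f(x)$ for $x\in\bdy\Om\setm E$, one checks that for every $x\in\bdy\Om$
\[
   \liminf_{\Om\ni y\to x}u(y)\ge f(x)-\chi_E(x)
   \quad\text{and}\quad
   \limsup_{\Om\ni y\to x}u(y)\le f(x)+\chi_E(x),
\]
since on $E$ the right-hand sides are $\le0\le u$ and $\ge1\ge u$, respectively. As $u$ is \p-harmonic and bounded, it is both a subharmonic function bounded above and a superharmonic function bounded below, so $u\in\UU_{f-\chi_E}(\Om)$ and $u\in\LL_{f+\chi_E}(\Om)$, whence $\uHpind{\Om}(f-\chi_E)\le u$ and $\lHpind{\Om}(f+\chi_E)\ge u$.

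The decisive step is to identify $\Hpind{\Om}(f\pm\chi_E)$ with $\Hpind{\Om}f$. Here $h=\pm\chi_E$ is zero $\bCp(\,\cdot\,;\Om)$-q.e.\ on $\bdy\Om$ because $\bCp(E;\Om)=0$. If $f$ is bounded and $\bCp(\,\cdot\,;\Om)$-quasicontinuous with $f|_\Om\in\Np(\Om)$, I would invoke Theorem~\ref{thm-Np-invariance-Om} for $f$ and for $f\pm\chi_E$; if instead $f\in C(\bdy\Om)$, Proposition~\ref{prop-cts-resolve-Om} gives the same conclusion. In either case all three boundary functions are resolutive with respect to $\Om$ and their Perron solutions agree, so that
\[
   u\ge\uHpind{\Om}(f-\chi_E)=\Hpind{\Om}f=\lHpind{\Om}(f+\chi_E)\ge u,
\]
forcing equality throughout, i.e.\ $u=\Hpind{\Om}f$.

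I do not anticipate any genuine obstacle: the substantive content has already been packaged into Theorem~\ref{thm-Np-invariance-Om} and Proposition~\ref{prop-cts-resolve-Om}, and the present argument is bookkeeping on top of them. The only points needing a line of care are the affine invariance of $\uHpind{\Om}$ used in the normalization, and the genuine use of the boundedness of $u$ (and of $f$ in the quasicontinuous case): without it $u$ need not lie in $\UU$ or $\LL$, as the Poisson-kernel example recorded after Corollary~\ref{cor-unique-Newt-Omm} shows.
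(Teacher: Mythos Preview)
Your proposal is correct and follows essentially the same approach as the paper: the paper states that the results in Section~\ref{sect-resol-bdryOm} are proved ``directly along the lines of the proofs of Theorem~\ref{thm-Newt-resolve-Omm} and the results in Section~\ref{sect-bdyM-Om}'', and your argument is precisely the adaptation of the proof of Corollary~\ref{cor-unique-Newt-Omm} with $\bdy\Om$, $\bCp(\,\cdot\,;\Om)$, Theorem~\ref{thm-Np-invariance-Om} and Proposition~\ref{prop-cts-resolve-Om} in place of their Mazurkiewicz counterparts.
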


The following is a convenient existence and uniqueness result
for solutions of the Dirichlet problem with continuous boundary data.
With $\bCp(\,\cdot\,;\Om)$ replaced by $\Cp(\,\cdot\,)$
it was probably first given explicitly in Bj\"orn--Bj\"orn~\cite{BB2} as a 
consequence of Corollary~6.2  in Bj\"orn--Bj\"orn--Shan\-mu\-ga\-lin\-gam~\cite{BBS2} 
and the Kellogg property.

\begin{cor}  \label{cor-9.4}
Assume that $f \in C(\bdy \Om)$. Then there is a unique
bounded \p-harmonic function $u$ on\/ $\Om$ such that 
\begin{equation} \label{eq-unique-Om}
    \lim_{\Om \ni y \to  x} u(y) = f(x) \quad \text{for $\bCp(\,\cdot\,;\Om)$-q.e. } x \in \bdy \Om.
\end{equation}
Moreover, $u=\Hpind{\Om} f$.
\end{cor}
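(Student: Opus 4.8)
The plan is to produce the required function explicitly as $u=\Hpind{\Om}f$, to use the Kellogg property to check that it attains the prescribed boundary values $\bCp(\,\cdot\,;\Om)$-q.e., and then to obtain uniqueness directly from Corollary~\ref{cor-unique-Om}.

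First I would collect the elementary properties of $\Hpind{\Om}f$. Since $X$ is complete with $\mu$ doubling it is proper, so $\bdy\Om$ is compact and $f\in C(\bdy\Om)$ is bounded, say $a\le f\le b$. The constant functions $a$ and $b$ belong to $\LL_f(\Om)$ and $\UU_f(\Om)$ respectively, whence $\lHpind{\Om}f\ge a$ and $\uHpind{\Om}f\le b$. By Proposition~\ref{prop-cts-resolve-Om} applied with $h\equiv0$ (this is also \cite[Theorem~6.1]{BBS2}), $f$ is resolutive with respect to $\Om$, so $u:=\Hpind{\Om}f=\uHpind{\Om}f=\lHpind{\Om}f$ is well defined and $a\le u\le b$; being bounded, $u$ is not identically $\pm\infty$ and is therefore \p-harmonic in $\Om$ by the $\Om$-analog of Theorem~4.1 of \cite{BBS2} recalled in Section~\ref{sect-Perron-Omm}.

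Next I would verify \eqref{eq-unique-Om} for this $u$. By the Kellogg property for \p-harmonic functions (see \cite{BBS2} and \cite{BBbook}) there is a set $E\subset\bdy\Om$ with $\Cp(E)=0$ such that $\lim_{\Om\ni y\to x}\Hpind{\Om}f(y)=f(x)$ for every $x\in\bdy\Om\setm E$. By the second inequality of Lemma~\ref{lem-Phicap-Omm} we have $\bCp(E;\Om)\le\Cp(E)=0$, so the limit in fact holds for $\bCp(\,\cdot\,;\Om)$-q.e.\ $x\in\bdy\Om$, which is exactly \eqref{eq-unique-Om}. This settles existence.

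Finally, for uniqueness let $v$ be any bounded \p-harmonic function on $\Om$ satisfying \eqref{eq-unique-Om}, say with exceptional set $E'\subset\bdy\Om$, $\bCp(E';\Om)=0$. Then $f\in C(\bdy\Om)$, $v$, and $E'$ meet the hypotheses of Corollary~\ref{cor-unique-Om}, so $v=\Hpind{\Om}f=u$; hence $u$ is the unique bounded \p-harmonic function satisfying \eqref{eq-unique-Om}, and $u=\Hpind{\Om}f$ as claimed. Since each step invokes an already-established result, there is no serious obstacle; the only point requiring care is that the Kellogg property in the literature is formulated for the Sobolev capacity $\Cp$, so one must invoke the comparison $\bCp(\,\cdot\,;\Om)\le\Cp$ of Lemma~\ref{lem-Phicap-Omm} to transfer it to the new, \emph{smaller} capacity $\bCp(\,\cdot\,;\Om)$ — which is precisely the direction of that inequality that we have available.
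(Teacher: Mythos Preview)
Your proof is correct and follows essentially the same approach as the paper: existence via $u=\Hpind{\Om}f$, boundary behaviour from the Kellogg property together with the comparison $\bCp(\,\cdot\,;\Om)\le\Cp$ of Lemma~\ref{lem-Phicap-Omm}, and uniqueness from Corollary~\ref{cor-unique-Om}. The only minor discrepancy is bibliographic: the paper cites \cite[Theorem~3.9]{BBS} (not \cite{BBS2}) for the Kellogg-type statement, combined with \cite[Theorem~6.1]{BBS2} for resolutivity.
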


\begin{proof}
We already know by Corollary~\ref{cor-unique-Om} that if $u$
is a bounded \p-harmonic function on $\Om$ satisfying \eqref{eq-unique-Om}, 
then $u=\Hpind{\Om} f$, which shows the uniqueness.

As for the existence, let $u=\Hpind{\Om} f$.
That $f$ is resolutive with respect to $\Om$ follows
from Proposition~\ref{prop-cts-resolve-Om}, but was actually first shown 
in Bj\"orn--Bj\"orn--Shan\-mu\-ga\-lin\-gam~\cite[Theorem~6.1]{BBS2}. 
An application of Bj\"orn--Bj\"orn--Shan\-mu\-ga\-lin\-gam~\cite[Theorem~3.9]{BBS}
together with \cite[Theorem~6.1]{BBS2} shows 
that there is a set $E \subset \bdy \Om$ such that $\Cp(E)=0$ and
\[
    \lim_{\Om \ni y \to  x} u(y) = f(x) \quad \text{for all } x \in \bdy \Om \setm E.
\]
By Lemma~\ref{lem-Phicap-Omm}, $\bCp(E,\Om)=0$.
\end{proof}

\section{Examples and applications}
\label{sect-examples}

The results in this paper are the third generation of this type of results, following
Bj\"orn--Bj\"orn--Shan\-mu\-ga\-lin\-gam~\cite{BBS2}
and Bj\"orn--Bj\"orn~\cite{BBbook}.  In this section we give some new examples
illustrating the results of this paper (in some cases, in combination
with the results found in~\cite{BBbook}). These examples are not covered by the 
results found in~\cite{BBS2}. They also demonstrate
the differences between the capacities considered in this paper.
There are also some resolutivity results in
A.~Bj\"orn~\cite{ABjump} which are relevant for our discussion,
see Example~\ref{ex-cusp-new}.

\begin{example}
\label{ex-cusp-new}
(Cusps in $\R^2$) Let $X=\R^2$ (unweighted) and $p > 2$.
It is well-known that $\Cp(\{x\})>0$ for each $x \in \R^2$. Let $\Om$ be the cusp
\[
    \Om = \{(x_1,x_2): 0 < x_1 <1 \text{ and } 0 < x_2 < x_1^\be \}
\]
with $\beta >p-1$. By considering the functions 
$u_R(x_1,x_2)=\max\{1-x_1/R,0\}$, $0<R<1$, we see that
$\bCp(\{0\};\Om)=0$ and that $\Cp(\{0\};\overline{\Om})=0$.

This means that in Theorem~\ref{thm-Np-invariance-Om} and
Proposition~\ref{prop-cts-resolve-Om} 
we can change the boundary data arbitrarily at the origin
when considering the Perron solution with respect to  $\Om$,
even though $\Cp(\{0\})>0$. Similarly, the exceptional set $E$ in
Corollary~\ref{cor-unique-Om} can contain $0$.
This improves upon the perturbation results of
Bj\"orn--Bj\"orn--Shan\-mu\-ga\-lin\-gam~\cite{BBS2}. 

Note that any function in $\Np(\overline{\Om})$
is continuous apart from possibly at $0$. 
It thus follows from Propositions~\ref{prop-zero-cap-capacitable} 
and \ref{prop-outercap} that it is both $\Cp(\,\cdot\,;\overline{\Om})$- 
and $\bCp(\,\cdot\,;\Om)$-quasicontinuous, so that the resolutivity and perturbation 
results in Theorem~\ref{thm-Np-invariance-Om} apply to all functions in
$\Np(\overline{\Om})$. For functions in $C(\bdy \Om)$, these results follow also from  
A.~Bj\"orn~\cite[Theorem~1.3]{ABjump}.
However, the  function $f(x)=\sin \log |x| \in \Np(\overline{\Om})$
cannot be treated by~\cite{ABjump},  regardless of the value given to $f(0)$.
By Theorem~\ref{thm-Np-invariance-Om}, it is resolutive 
with respect to $\Om$ and $\Hp f$ is independent of $f(0)$.

Note that as $\Om$ is locally connected at the boundary, 
we have $\overline{\Om}=\clOmm$, and thus the $\Omm$-Perron solutions are
the same as the usual Perron solutions in this case. Consider instead
\[
\Om_0=(0,2)^2\setm \{(x_1,x_2): 0 < x_1 \le1 \text{ and } x_2 = x_1^\be \}
\supset \Om.
\]
Then $\clOmm_0\ne\overline{\Om}_0$ since the origin splits into $0_1$ and $0_2$ 
depending on whether it is approached from the right ($x_1$-direction) or from above 
($x_2$-direction). Then $\bCp(\{0\};\Om_0)$, $\Cp(\{0\};\overline{\Om}_0)$, 
$\bCp(\{0_2\};\Ommo)$ and  $\Cp(\{0_2\};\clOmm_0)$ are all positive and neither $0$ nor
$0_2$ can be treated by the results in Section~\ref{sect-resol-bdryOm}.
On the other hand, the discussion in the first paragraph of this example 
shows that $\bCp(\{0_1\};\Ommo)=\Cp(\{0_1\};\clOmm_0)=0$, so all the above 
resolutivity and perturbation results apply to $\{0_1\}$. For example, the function
\[
f_0(x)=\begin{cases}
	\sin \log |x|, & 
             \text{if $x\in\clOmm_0$, $|x|\le1$ and $d_M(x,0_1)\le d_M(x,0_2)$}, \\
	0,& \text{otherwise},
	\end{cases}
\]
is resolutive with respect to $\Ommo$ and $\Hp f$ is independent of $f(0_1)$.

To obtain similar results for $1<p\le2$, equip $\R^2$
with the measure $|x|^{-1} \, dx$ (which is doubling and supports  a \p-Poincar\'e inequality by
Heinonen--Kilpel\"ainen--Martio~\cite[Example~2.22]{HeKiMa})
and let $\be>p$ in the above construction.
\end{example}

\begin{example} \label{ex-comb-wide-thin}
(The topologist's comb) Let $\Om \subset \R^2$ be given by
\[
	\Om :=((0,2) \times (-1,1))
         \setm \bigl(\bigl\{1,\tfrac{1}{2},\tfrac{1}{4},
         \ldots,0\bigr\}
	\times [0,1)\bigr).
\]
Let $A=\{0\} \times (0,1] \subset \bdy \Om$.
Then $\overline{\Om}=[0,2] \times [-1,1]$ and so $\Cp(A;\overline{\Om})>0$. 
As we shall see,   $\bCp(A;\Om)=0$, and thus we get significantly better results using the
$\bCp(\,\cdot\,;\Om)$-capacity than with the $\Cp(\,\cdot\,;\overline{\Om})$-capacity.
To show that $\bCp(A;\Om)=0$ we let
\[
    \de_j=\biggl(\frac{3}{4}\biggr)^{j/(p-1)}
	\quad \text{and} \quad
	f_j(x,y)=\begin{cases}
	\displaystyle\min\biggl\{\frac{y}{\de_j},1\biggr\},
	 	& \text{if } 2^{-j}<x<2^{1-j},\ 0 <y<1, \\
	0, & \text{otherwise,} 
	\end{cases}
\]
for $j=1,2\ldots$\,. Set $h_k:=\sum_{j=k}^\infty f_j \in \A_A$.
Then $\|h_k\|_{L^p(\Om)}^p \to 0$ as $k \to \infty$, and
\[
	\|g_{h_k}\|_{L^p(\Om)}^p 
	= \sum_{j=k}^\infty \frac{2^{-j} \de_j}{\de_j^p} 
	= \sum_{j=k}^\infty \biggl(\frac{2}{3}\biggr)^j 
	\to 0,
	\quad \text{as } k \to \infty.
\]
Hence for all $p>1$, $\bCp(A;\Om) \le \|h_k\|_{\Np(\Om)}^p \to 0$, 
as $k \to \infty$. (Because singleton sets have zero capacity 
if and only if $1\le p\le 2$, it  also follows that
$\bCp(\itoverline{A},\Om)=0$ if and only if $1<p\le 2$.)

We can therefore perturb the boundary data as we wish on $A$ in 
Theorem~\ref{thm-Np-invariance-Om} and
Proposition~\ref{prop-cts-resolve-Om}, and  in
Corollary~\ref{cor-unique-Om}. In particular, 
Proposition~\ref{prop-cts-resolve-Om}  shows that if $f \in C(\bdy \Om)$ and
$h=f$ on $\bdy \Om \setm A$, then $h$ is resolutive 
with respect to $\Om$ and $\Hp h = \Hp f$. None of this can be inferred  
from the results of Bj\"orn--Bj\"orn--Shan\-mu\-ga\-lin\-gam~\cite{BBS2},
nor from the results in Bj\"orn--Bj\"orn~\cite{BBbook}. 

A variant of this example is obtained by replacing each slit
$S_j=\{2^{-j}\}\times[0,1)$ by the thin rectangle 
\[
R_j=[2^{-j}-2^{-j-2},2^{-j}+2^{-j-2}]\times[0,1), \quad j=0,1,\ldots,
\]
i.e.\ letting $\Om'=((0,2) \times (-1,1))\setm\bigcup_{j=0}^\infty R_j$. Since 
$g=\infty \chi_A$ is an upper gradient of $\chi_A$ in $\clOmprim$,
we see that $\Cp(A;\clOmprim) \le \|\chi_A\|_{\Np(\clOmprim)}=  0$.
Note that any curve starting in  $A$ and ending in $\clOmprim \setm A$ 
must pass through $(0,0)$ first, and therefore intersects $A$ along an interval of 
positive length. On the other hand, $\Cp(\itoverline{A},\clOmprim)=0$ 
if and only if $1<p\le 2$.

Thus, the above resolutivity and perturbation conclusions 
for this ``thickened'' comb $\Om'$ are  obtainable
already by the results in~\cite{BBbook}, where they are formulated
using $\Cp(\,\cdot\,;\clOmprim)$. However, we now show an interesting phenomenon 
on $\Om'$ which does not appear in the ordinary comb $\Om$:  For $(x,y)\in\clOm'$, let 
\begin{equation}  \label{eq-ex-comb-wide-thin}
     f(x,y) =\begin{cases}
            y, & \text{if } 0 \le y \le 1 \text{ and }
           x \in \bigcup_{j=1}^\infty (2^{-2j},2^{1-2j}), \\
            0, & \text{otherwise}.
       \end{cases}
\end{equation}
Then $g\equiv 1$ is an upper gradient of $f$
in $\clOmprim$ and hence $f \in \Np(\clOmprim)$.
Since $\bCp(A;\Om')=\Cp(A;\clOmprim) =0$ and $f$ is continuous at 
all points but those in $A$, Proposition~\ref{prop-outercap} implies that $f$ is both
$\bCp(\,\cdot\,;\Om')$- and $\Cp(\,\cdot\,;\clOmprim)$-quasi\-con\-tin\-u\-ous.
It is thus resolutive by Theorem~\ref{thm-Np-invariance-Om}
(and even by Theorem~10.15 in~\cite{BBbook}).

Note that $f$ oscillates near $A$, has countably many ``jumps'' on lines parallel to the 
$x$-axis with ordinate $0<y<1$,
and cannot be extended to a Newtonian function on $\R^2$.
A similar construction is not possible on the ordinary comb $\Om$ since
all the slits $\{2^{-j}\}\times[0,1)$  have positive $\bCp(\,\cdot\,;\Om)$-capacity
and thus a function with jumps at these slits cannot be 
$\bCp(\,\cdot\,;\Om)$-quasicontinuous on $\clOm$.
See however Example~\ref{ex-count-comb} below where a similar construction
is done on the countable comb. (The reason why it works there is that the union of the main slits
has zero $\bCp(\,\cdot\,,\Om)$-capacity, 
making $f$ $\bCp(\,\cdot\,;\Om)$-quasicontinuous on $\clOm$.)

The above disctinction between the ordinary comb and the ``thickened'' comb
further motivates Perron solutions with respect to the
Mazurkiewicz boundary $\bdym\Om$ and the generalized Perron solutions, 
see Sections~\ref{sect-Perron-Omm}, \ref{sect-bdyM-Om} 
and~\ref{sect-gen-Perron}, and also A.~Bj\"orn~\cite{ABcomb}.
\end{example}

\begin{figure}[t]
\begin{center}
\setlength{\unitlength}{1800sp}
\begingroup\makeatletter\ifx\SetFigFont\undefined%
\gdef\SetFigFont#1#2#3#4#5{%
  \reset@font\fontsize{#1}{#2pt}%
  \fontfamily{#3}\fontseries{#4}\fontshape{#5}%
  \selectfont}%
\fi\endgroup%
\begin{picture}(9666,9687)(1168,-8794)
\thinlines
{\color[rgb]{0,0,0}\put(1201,-8761){\framebox(9600,9600){}}
}%
{\color[rgb]{0,0,0}\put(6001,839){\line( 0,-1){4800}}
}%
{\color[rgb]{0,0,0}\put(3601,839){\line( 0,-1){4800}}
}%
{\color[rgb]{0,0,0}\put(2401,839){\line( 0,-1){4800}}
}%
{\color[rgb]{0,0,0}\put(1801,839){\line( 0,-1){4800}}
}%
{\color[rgb]{0,0,0}\put(1501,839){\line( 0,-1){4800}}
}%
{\color[rgb]{0,0,0}\multiput(3001,839)(0.00000,-118.51852){41}{\line( 0,-1){ 59.259}}
\multiput(3001,-3961)(114.28571,0.00000){11}{\line( 1, 0){ 57.143}}
\multiput(4201,-3961)(0.00000,118.51852){41}{\line( 0, 1){ 59.259}}
}%
{\color[rgb]{0,0,0}\multiput(2101,839)(0.00000,-118.51852){41}{\line( 0,-1){ 59.259}}
\multiput(2101,-3961)(109.09091,0.00000){6}{\line( 1, 0){ 54.545}}
\multiput(2701,-3961)(0.00000,118.51852){41}{\line( 0, 1){ 59.259}}
}%
{\color[rgb]{0,0,0}\multiput(1651,839)(0.00000,-118.51852){41}{\line( 0,-1){ 59.259}}
\multiput(1651,-3961)(120.00000,0.00000){3}{\line( 1, 0){ 60.000}}
\multiput(1951,-3961)(0.00000,118.51852){41}{\line( 0, 1){ 59.259}}
\put(1951,839){\line( 0,-1){ 75}}
}%
{\color[rgb]{0,0,0}\multiput(1426,839)(0.00000,-118.51852){41}{\line( 0,-1){ 59.259}}
\multiput(1426,-3961)(100.00000,0.00000){2}{\line( 1, 0){ 50.000}}
\multiput(1576,-3961)(0.00000,118.51852){41}{\line( 0, 1){ 59.259}}
}%
\thicklines
{\color[rgb]{0,0,0}\put(6001,839){\line( 0,-1){4800}}
}%
{\color[rgb]{0,0,0}\put(3601,839){\line( 0,-1){4800}}
}%
{\color[rgb]{0,0,0}\put(2401,839){\line( 0,-1){4800}}
}%
{\color[rgb]{0,0,0}\put(1801,839){\line( 0,-1){4800}}
}%
{\color[rgb]{0,0,0}\put(1501,839){\line( 0,-1){4800}}
}%
{\color[rgb]{0,0,0}\put(1201,-8761){\framebox(9600,9600){}}
}%
\thinlines
{\color[rgb]{0,0,0}\put(6601,839){\line( 0,-1){4800}}
}%
{\color[rgb]{0,0,0}\put(6301,839){\line( 0,-1){4800}}
}%
{\color[rgb]{0,0,0}\put(6151,839){\line( 0,-1){4800}}
}%
{\color[rgb]{0,0,0}\put(5401,839){\line( 0,-1){4800}}
}%
{\color[rgb]{0,0,0}\put(5701,839){\line( 0,-1){4800}}
}%
{\color[rgb]{0,0,0}\put(5851,839){\line( 0,-1){4800}}
}%
\thicklines
{\color[rgb]{0,0,0}\put(1351,839){\line( 0,-1){4800}}
}%
\thinlines
{\color[rgb]{0,0,0}\put(4801,-3961){\dashbox{60}(2400,4800){}}
}%
{\color[rgb]{0,0,0}\put(3301,839){\line( 0,-1){4800}}
}%
{\color[rgb]{0,0,0}\put(3451,839){\line( 0,-1){4800}}
}%
{\color[rgb]{0,0,0}\put(3901,839){\line( 0,-1){4800}}
}%
{\color[rgb]{0,0,0}\put(3751,839){\line( 0,-1){4800}}
}%
\end{picture}

    \caption{\label{fig1} The topologist's combs in 
Examples~\ref{ex-comb-wide-thin} and~\ref{ex-count-comb}.
The thick lines represent the main slits $S_j$, the broken lines represent 
the rectangles $R_j$ in the ``thickened'' comb and the thin lines
represent the secondary slits near the first two main slits
$S_0$ and $S_1$ in the countable comb.}
  \end{center}
\end{figure}

\begin{example} 
\label{ex-double-cone}
(Double comb) Let $\Om \subset \R^2$ be given by
\[
     \Om = (-1,1)^2 \setm \bigl(\bigl\{\pm \tfrac{1}{2}, 
       \pm \tfrac{1}{4},  \pm \tfrac{1}{8},  
       \ldots, 0\bigr\} \times [0,1)\bigr).
\]
Just as in Example~\ref{ex-comb-wide-thin}  we find that
\[
      \bCp(A;\Om) = 0 < \Cp(A;\overline{\Om}),
\]
where $A=\{0\} \times (0,1]$, and
we get similar consequences for perturbing the boundary data at points in  $A$ as in 
Example~\ref{ex-comb-wide-thin} (not obtainable from 
the results in \cite{BBS2} and \cite{BBbook}). Consider e.g.\  the function 
\begin{equation} \label{eq-ex-double-cone}
   f(x,y)= \begin{cases}
        y, & 0  \le x \le  1, \ 0 < y \le 1, \\
        0, & \text{otherwise}.
     \end{cases}
\end{equation}
Arguing  as in Example~\ref{ex-comb-wide-thin}, we see that $f \in \Np(\Om)$ 
and that $f$ is $\bCp(\,\cdot\,,\Om)$-quasi\-con\-tin\-u\-ous.
Hence, by Theorem~\ref{thm-Np-invariance-Om}, $f$ is resolutive
even though $f \notin \Np(\overline{\Om})$ (since it is not absolutely
continuous on lines parallel to the $x$-axis with ordinate $0<y<1$).
Therefore, the resolutivity of $f$ \emph{cannot} be obtained by
the results in \cite{BBS2} or \cite{BBbook}.
\end{example}

\begin{example}   \label{ex-count-comb}
(Countable comb)
Let $\Om \subset \R^2$ be given by
\[
     \Om = ((0,2) \times (-1,1)) \setm 
          \biggl(\biggl(E \cup \bigcup_{j=0}^\infty E_j\biggr) \times [0,1)\biggr),
\]
where
\[
   E=\{2^{-j}: j=0,1,\ldots\}
   \quad \text{and} \quad
   E_{j}=\{2^{-j}(1 \pm 2^{-k}): k=3,4,\ldots\}.
\]
Furthermore, let $A'=(E \cup \{0\}) \times(0,1]$
and let $f$ be given by \eqref{eq-ex-comb-wide-thin}.
Note that $f$ oscillates near $\{0\}\times(0,1]$ and has countably many ``jumps'' on 
lines parallel to the $x$-axis with ordinate $0<y<1$.

As in Example~\ref{ex-comb-wide-thin} 
(and using also the countable subadditivity
of the capacity) we see that $\bCp(A',\Om)=0$, from which it follows
that $f$ is $\bCp(\,\cdot\,,\Om)$-quasi\-con\-tin\-u\-ous. Moreover, $f \in \Np(\Om)$.
Hence, $f$ is resolutive by Theorem~\ref{thm-Np-invariance-Om}.
\end{example}

We next give an example of a domain whose boundary $\bdy \Om \subset \R$ 
has positive measure, and such that there is a set $K \subset \bdy \Om$ with 
$\bCp(K;\Om)=0$ and $\mu(\bdy \Om \setm K)=0$, i.e.\ 
from a measure-theoretic point of view $K$ is essentially all the boundary, but
for the Perron solution results in this paper it is negligible. 

\begin{example} \label{ex-Cantor}
Let $c_n=2^{-n}$, $n=0,1,\ldots$. We shall construct a Cantor set 
$\Kt=\bigcap_{n=0}^\infty\Kt_n \subset[0,2]=\Kt_0$ inductively as follows.
The $n$th generation $\Kt_n$ consists of $2^n$ closed intervals of length 
$\al_n:=2^{-n}(1+c_n)=2^{-2n}(2^n+1)$, $n=0,1,\ldots$, and is obtained by removing
the open middle subinterval of length 
\[
\theta_n=\al_{n-1}-2\al_n = 2^{1-n}(c_{n-1}-c_n) = 2^{1-n} c_n = 2^{1-2n}
\] 
from each interval constituting $\Kt_{n-1}$. It is easy to see that $\Kt$ has positive length, 
or more precisely $\La_1(\Kt)=1$, where $\La_1$ is the 1-dimensional Lebesgue measure.

Set $K_n=\Kt_n\times\Kt_n$ and $K=\Kt\times\Kt$.
The set $K_n$ consists of $4^n$ closed squares $Q_{n,j}$, $j=1,\ldots,4^n$, and
$K$ has two-dimensional Lebesgue measure $\La_2(K)=1$.  Let  
\[
	x_{n,j}=\inf\{x: (x,y) \in Q_{n,j}\}
	\quad \text{and}\quad
	x'_{n,j}=\sup\{x: (x,y) \in Q_{n,j}\}=x_{n,j}+\alp_n
\]
be the left- and right-hand end points of  the projection of the square $Q_{n,j}$ to the $x$-axis, and set
\[
        A_{n,j,k}=\begin{cases}
	    \{(x,y) : x \ge x_{n,j}\}, & \text{if $k$ is even}, \\
	    \{(x,y) : x \le x'_{n,j}\}, & \text{if $k$ is odd}.
	\end{cases}
\]
Consider the sets 
\[
       F_{n,j,k} =
	\biggl\{z \in A_{n,j,k}: 
	 \dist(z,Q_{n,j})
	= \frac{\theta_n}{6} (1 + 2^{-n}k)\biggr\}, \quad k=0,\ldots,2^n,
\]
obtained by the concatenation of three line segments and two quartercircles.
Note that all the sets $F_{n,j,k}$ are pairwise disjoint
and in fact, $\dist(F_{n,j,k},F_{n,j,k+1})=2^{-n}\theta_n/6$. For $m>n$,
\[
  \dist(F_{n,j,k},F_{m,i,l})\ge \frac{\theta_n}{6}- \frac{\theta_{n+1}}{3}
= \frac{2^{-n}(c_n-c_{n+1})}{3} = \frac{\theta_{n+1}}{3} \ge \frac{2^{-n}\theta_n}{6}.
\]
Finally we let 
\[
	\Om=(-1,3)^2 \setm \biggl(K \cup \bigcup_{n=1}^\infty 
	    \bigcup_{j=1}^{4^n} \bigcup_{k=1}^{2^n+1} F_{n,j,k} \biggr).
\]
Around each square $Q_{n,j}$ we have thus placed $2^{n}+1$ pairwise disjoint arcs with
openings placed alternately to the left and to the right of the square.
This means that any curve in $\Om$ connecting a point in the region outside of all these 
arcs with  a point in $Q_{n,j}$ must have length at least $2^n \alp_n = 1+c_n$.
Thus, any curve starting at a point $z\in\Om$
with $\dist(z,Q_{n,j}) \ge\theta_n/3$ and ending at a point $w\in\Om$ with 
$\dist(w,Q_{m,i}) \le \theta_{m}/6$, $m>n$, must have length at least 
$\sum_{k=n}^m 2^k\alp_k\to\infty$ as $m\to\infty$, for each $n$.  

We will now show that $\bCp(K;\Om)=0$. Let
\[
    f_j(z)=\min\{\din(z,(-1,0))/j,1\}.
\]
By the observation above, we see that $f_j \in \A_{K}$ with  $g_{f_j} \le1/j$.
Moreover, $\|f_j\|_{L^p(\Om)}^p \to 0$ and 
$\|g_{f_j}\|_{L^p(\Om)}^p \le \mu(\Om)/j^p \to 0$ as $j\to \infty$.
Hence $\bCp(K;\Om) \le \|f_j\|_{\Np(\Om)}^p \to 0$ as $j\to \infty$. On the other hand, 
\[
\Cp(K;\overline{\Om}) = \Cp(K;[-1,3]^2) \ge \mu(K)>0.
\]
In fact, we even have $\Cp(K;(\overline{\Om};\mu_0))>0$. Indeed,
if $\Cp(\Kt;[-1,3] \times [-1,0])$ were $0$, then a reflection
and localization argument would show that $\Cp(\Kt;\R^2)$ would be $0$,
which contradicts Theorem~2.26 in  Heinonen--Kilpel\"ainen--Martio~\cite{HeKiMa} since $\La_1(\Kt)>0$.
Hence $0 < \Cp(\Kt;[-1,3] \times [-1,0])  \le \Cp(K;(\overline{\Om};\mu_0))$, by monotonicity.

Observe that $\Om$ is boundedly connected at the boundary, and that
if $x \in \bdy \Om$, then $\Om$ is either
locally connected at $x$ (for $x\in\bdry(-1,3)^2\cup K$) or $2$-connected at $x$ (for $x\in F_{n,j,k})$.
\end{example}

The following modification of the example above may be of interest.

\begin{example} \label{ex-Cantor-2}
Let $\Om$ be constructed just as in Example~\ref{ex-Cantor},
but replace each $F_{n,j,k}$ by $F_{n,j,k}'=\itoverline{G}_{n,j,k}$, where
${G}_{n,j,k}$ is the $2^{-n}\theta_n/24$-neighbourhood of $F_{n,j,k}$.
Then all $F'_{n,j,k}$ are still pairwise disjoint and $\Om$ is locally connected
at the boundary.

This time, not only $\bCp(K;\Om)=0$ but also $\Cp(K;(\overline{\Om};\mu_0))=0$.
Indeed, there are no nonconstant rectifiable curves 
in $\overline{\Om}=[-1,3]^2\setm\bigcup_{n,j,k}G_{n,j,k}$ 
intersecting $K$, and hence $\chi_{K} \in \Np(\overline{\Om})$. Note however that  
$0<\mu(K) \le \Cp(K;\overline{\Om}) \le  \|\chi_{K}\|_{\Np(\overline{\Om})}^p = \mu(K)$.
\end{example}

In Examples~\ref{ex-Cantor} and~\ref{ex-Cantor-2} we had $\bCp(K;\Om)=0$
because there were no rectifiable curves in $\Om$ terminating in $K$.
In the following example exery $x\in K$ is accessible by rectifiable curves 
from $\Om$ but there is still a set $K^*\subset K$ with full measure in $K$
such that $\bCp(K^*;\Om)=0$. Roughly speaking, $K^*$ lies deep in $K$ and there are 
rather few curves reaching that far.

\begin{figure}[t]
\begin{center}
\setlength{\unitlength}{1800sp}%
\begingroup\makeatletter\ifx\SetFigFont\undefined%
\gdef\SetFigFont#1#2#3#4#5{%
  \reset@font\fontsize{#1}{#2pt}%
  \fontfamily{#3}\fontseries{#4}\fontshape{#5}%
  \selectfont}%
\fi\endgroup%
\begin{picture}(9624,9624)(1189,-9973)
\thinlines
{\color[rgb]{0,0,0}\put(1201,-961){\framebox(600,600){}}
}%
{\color[rgb]{0,0,0}\put(1876,-961){\framebox(600,600){}}
}%
{\color[rgb]{0,0,0}\put(3526,-961){\framebox(600,600){}}
}%
{\color[rgb]{0,0,0}\put(4201,-961){\framebox(600,600){}}
}%
{\color[rgb]{0,0,0}\put(1876,-1636){\framebox(600,600){}}
}%
{\color[rgb]{0,0,0}\put(3526,-1636){\framebox(600,600){}}
}%
{\color[rgb]{0,0,0}\put(1876,-3286){\framebox(600,600){}}
}%
{\color[rgb]{0,0,0}\put(1876,-3961){\framebox(600,600){}}
}%
{\color[rgb]{0,0,0}\put(1201,-3286){\framebox(600,600){}}
}%
{\color[rgb]{0,0,0}\put(3526,-3286){\framebox(600,600){}}
}%
{\color[rgb]{0,0,0}\put(4201,-3286){\framebox(600,600){}}
}%
{\color[rgb]{0,0,0}\put(3526,-3961){\framebox(600,600){}}
}%
{\color[rgb]{0,0,0}\put(7876,-961){\framebox(600,600){}}
}%
{\color[rgb]{0,0,0}\put(7201,-961){\framebox(600,600){}}
}%
{\color[rgb]{0,0,0}\put(9526,-1636){\framebox(600,600){}}
}%
{\color[rgb]{0,0,0}\put(10201,-1636){\framebox(600,600){}}
}%
{\color[rgb]{0,0,0}\put(9526,-961){\framebox(600,600){}}
}%
{\color[rgb]{0,0,0}\put(7876,-3286){\framebox(600,600){}}
}%
{\color[rgb]{0,0,0}\put(7876,-3961){\framebox(600,600){}}
}%
{\color[rgb]{0,0,0}\put(7201,-3286){\framebox(600,600){}}
}%
{\color[rgb]{0,0,0}\put(10201,-3286){\framebox(600,600){}}
}%
{\color[rgb]{0,0,0}\put(9526,-3961){\framebox(600,600){}}
}%
{\color[rgb]{0,0,0}\put(1201,-6961){\framebox(600,600){}}
}%
{\color[rgb]{0,0,0}\put(1876,-6961){\framebox(600,600){}}
}%
{\color[rgb]{0,0,0}\put(1876,-7636){\framebox(600,600){}}
}%
{\color[rgb]{0,0,0}\put(3526,-6961){\framebox(600,600){}}
}%
{\color[rgb]{0,0,0}\put(4201,-6961){\framebox(600,600){}}
}%
{\color[rgb]{0,0,0}\put(4201,-7636){\framebox(600,600){}}
}%
{\color[rgb]{0,0,0}\put(7876,-6961){\framebox(600,600){}}
}%
{\color[rgb]{0,0,0}\put(7876,-7636){\framebox(600,600){}}
}%
{\color[rgb]{0,0,0}\put(7201,-7636){\framebox(600,600){}}
}%
{\color[rgb]{0,0,0}\put(9526,-6961){\framebox(600,600){}}
}%
{\color[rgb]{0,0,0}\put(9526,-7636){\framebox(600,600){}}
}%
{\color[rgb]{0,0,0}\put(10201,-7636){\framebox(600,600){}}
}%
{\color[rgb]{0,0,0}\put(1876,-9286){\framebox(600,600){}}
}%
{\color[rgb]{0,0,0}\put(1876,-9961){\framebox(600,600){}}
}%
{\color[rgb]{0,0,0}\put(1201,-9961){\framebox(600,600){}}
}%
{\color[rgb]{0,0,0}\put(3526,-9286){\framebox(600,600){}}
}%
{\color[rgb]{0,0,0}\put(4201,-9286){\framebox(600,600){}}
}%
{\color[rgb]{0,0,0}\put(3526,-9961){\framebox(600,600){}}
}%
{\color[rgb]{0,0,0}\put(7201,-9286){\framebox(600,600){}}
}%
{\color[rgb]{0,0,0}\put(7876,-9286){\framebox(600,600){}}
}%
{\color[rgb]{0,0,0}\put(7876,-9961){\framebox(600,600){}}
}%
{\color[rgb]{0,0,0}\put(9526,-9286){\framebox(600,600){}}
}%
{\color[rgb]{0,0,0}\put(10201,-9286){\framebox(600,600){}}
}%
{\color[rgb]{0,0,0}\put(9526,-9961){\framebox(600,600){}}
}%
{\color[rgb]{0,0,0}\put(1201,-1636){\framebox(600,600){}}
}%
{\color[rgb]{0,0,0}\put(4201,-1636){\framebox(600,600){}}
}%
{\color[rgb]{0,0,0}\put(7201,-1636){\framebox(600,600){}}
}%
{\color[rgb]{0,0,0}\put(10201,-961){\framebox(600,600){}}
}%
{\color[rgb]{0,0,0}\put(10201,-3961){\framebox(600,600){}}
}%
{\color[rgb]{0,0,0}\put(7201,-3961){\framebox(600,600){}}
}%
{\color[rgb]{0,0,0}\put(4201,-3961){\framebox(600,600){}}
}%
{\color[rgb]{0,0,0}\put(1201,-3961){\framebox(600,600){}}
}%
{\color[rgb]{0,0,0}\put(1201,-7636){\framebox(600,600){}}
}%
{\color[rgb]{0,0,0}\put(3526,-7636){\framebox(600,600){}}
}%
{\color[rgb]{0,0,0}\put(7201,-6961){\framebox(600,600){}}
}%
{\color[rgb]{0,0,0}\put(10201,-6961){\framebox(600,600){}}
}%
{\color[rgb]{0,0,0}\put(10201,-9961){\framebox(600,600){}}
}%
{\color[rgb]{0,0,0}\put(7201,-9961){\framebox(600,600){}}
}%
{\color[rgb]{0,0,0}\put(4201,-9961){\framebox(600,600){}}
}%
{\color[rgb]{0,0,0}\put(1201,-9286){\framebox(600,600){}}
}%
{\color[rgb]{0,0,0}\put(3526,-7636){\dashbox{60}(4950,4950){}}
}%
{\color[rgb]{0,0,0}\put(1876,-3286){\dashbox{60}(2250,2250){}}
}%
{\color[rgb]{0,0,0}\put(7876,-3286){\dashbox{60}(2250,2250){}}
}%
{\color[rgb]{0,0,0}\put(1876,-9286){\dashbox{60}(2250,2250){}}
}%
{\color[rgb]{0,0,0}\put(7876,-9286){\dashbox{60}(2250,2250){}}
}%
{\color[rgb]{0,0,0}\put(1501,-1336){\dashbox{60}(675,675){}}
}%
{\color[rgb]{0,0,0}\put(3826,-1336){\dashbox{60}(675,675){}}
}%
{\color[rgb]{0,0,0}\put(1501,-3661){\dashbox{60}(675,675){}}
}%
{\color[rgb]{0,0,0}\put(9826,-1336){\dashbox{60}(675,675){}}
}%
{\color[rgb]{0,0,0}\put(1501,-7336){\dashbox{60}(675,675){}}
}%
{\color[rgb]{0,0,0}\put(1501,-9661){\dashbox{60}(675,675){}}
}%
{\color[rgb]{0,0,0}\put(3826,-9661){\dashbox{60}(675,675){}}
}%
{\color[rgb]{0,0,0}\put(7501,-9661){\dashbox{60}(675,675){}}
}%
{\color[rgb]{0,0,0}\put(9826,-7336){\dashbox{60}(675,675){}}
}%
{\color[rgb]{0,0,0}\put(1351,-811){\dashbox{60}(300,300){}}
}%
{\color[rgb]{0,0,0}\put(2026,-811){\dashbox{60}(300,300){}}
}%
{\color[rgb]{0,0,0}\put(1351,-1486){\dashbox{60}(300,300){}}
}%
{\color[rgb]{0,0,0}\put(3676,-811){\dashbox{60}(300,300){}}
}%
{\color[rgb]{0,0,0}\put(4351,-811){\dashbox{60}(300,300){}}
}%
{\color[rgb]{0,0,0}\put(4351,-1486){\dashbox{60}(300,300){}}
}%
{\color[rgb]{0,0,0}\put(7351,-811){\dashbox{60}(300,300){}}
}%
{\color[rgb]{0,0,0}\put(8026,-811){\dashbox{60}(300,300){}}
}%
{\color[rgb]{0,0,0}\put(9676,-811){\dashbox{60}(300,300){}}
}%
{\color[rgb]{0,0,0}\put(10351,-811){\dashbox{60}(300,300){}}
}%
{\color[rgb]{0,0,0}\put(10351,-1486){\dashbox{60}(300,300){}}
}%
{\color[rgb]{0,0,0}\put(10351,-3136){\dashbox{60}(300,300){}}
}%
{\color[rgb]{0,0,0}\put(10351,-3811){\dashbox{60}(300,300){}}
}%
{\color[rgb]{0,0,0}\put(9676,-6811){\dashbox{60}(300,300){}}
}%
{\color[rgb]{0,0,0}\put(10351,-6811){\dashbox{60}(300,300){}}
}%
{\color[rgb]{0,0,0}\put(10351,-7486){\dashbox{60}(300,300){}}
}%
{\color[rgb]{0,0,0}\put(1351,-6811){\dashbox{60}(300,300){}}
}%
{\color[rgb]{0,0,0}\put(2026,-6811){\dashbox{60}(300,300){}}
}%
{\color[rgb]{0,0,0}\put(1351,-7486){\dashbox{60}(300,300){}}
}%
{\color[rgb]{0,0,0}\put(10351,-9136){\dashbox{60}(300,300){}}
}%
{\color[rgb]{0,0,0}\put(10351,-9811){\dashbox{60}(300,300){}}
}%
{\color[rgb]{0,0,0}\put(8026,-9811){\dashbox{60}(300,300){}}
}%
{\color[rgb]{0,0,0}\put(7351,-9811){\dashbox{60}(300,300){}}
}%
{\color[rgb]{0,0,0}\put(7351,-9136){\dashbox{60}(300,300){}}
}%
{\color[rgb]{0,0,0}\put(1351,-9136){\dashbox{60}(300,300){}}
}%
{\color[rgb]{0,0,0}\put(1351,-9811){\dashbox{60}(300,300){}}
}%
{\color[rgb]{0,0,0}\put(2026,-9811){\dashbox{60}(300,300){}}
}%
{\color[rgb]{0,0,0}\put(3676,-9811){\dashbox{60}(300,300){}}
}%
{\color[rgb]{0,0,0}\put(4351,-9811){\dashbox{60}(300,300){}}
}%
{\color[rgb]{0,0,0}\put(4351,-9136){\dashbox{60}(300,300){}}
}%
{\color[rgb]{0,0,0}\put(9826,-9661){\dashbox{60}(675,675){}}
}%
{\color[rgb]{0,0,0}\put(9676,-9811){\dashbox{60}(300,300){}}
}%
{\color[rgb]{0,0,0}\put(1351,-3136){\dashbox{60}(300,300){}}
}%
{\color[rgb]{0,0,0}\put(1351,-3811){\dashbox{60}(300,300){}}
}%
{\color[rgb]{0,0,0}\put(2026,-3811){\dashbox{60}(300,300){}}
}%
{\color[rgb]{0,0,0}\put(7501,-1336){\dashbox{60}(675,675){}}
}%
{\color[rgb]{0,0,0}\put(7876,-1636){\framebox(600,600){}}
}%
{\color[rgb]{0,0,0}\put(9826,-3661){\dashbox{60}(675,675){}}
}%
{\color[rgb]{0,0,0}\put(9526,-3286){\framebox(600,600){}}
}%
{\color[rgb]{0,0,0}\put(7351,-1486){\dashbox{60}(300,300){}}
}%
{\color[rgb]{0,0,0}\put(9676,-3811){\dashbox{60}(300,300){}}
}%
{\color[rgb]{0,0,0}\put(2026,-1486){\dashbox{60}(300,300){}}
}%
{\color[rgb]{0,0,0}\put(3676,-1486){\dashbox{60}(300,300){}}
}%
{\color[rgb]{0,0,0}\put(2026,-3136){\dashbox{60}(300,300){}}
}%
{\color[rgb]{0,0,0}\put(3676,-3136){\dashbox{60}(300,300){}}
}%
{\color[rgb]{0,0,0}\put(4351,-3136){\dashbox{60}(300,300){}}
}%
{\color[rgb]{0,0,0}\put(3826,-3661){\dashbox{60}(675,675){}}
}%
{\color[rgb]{0,0,0}\put(3676,-3811){\dashbox{60}(300,300){}}
}%
{\color[rgb]{0,0,0}\put(4351,-3811){\dashbox{60}(300,300){}}
}%
{\color[rgb]{0,0,0}\put(8026,-1486){\dashbox{60}(300,300){}}
}%
{\color[rgb]{0,0,0}\put(9676,-1486){\dashbox{60}(300,300){}}
}%
{\color[rgb]{0,0,0}\put(7351,-3136){\dashbox{60}(300,300){}}
}%
{\color[rgb]{0,0,0}\put(8026,-3136){\dashbox{60}(300,300){}}
}%
{\color[rgb]{0,0,0}\put(7351,-3811){\dashbox{60}(300,300){}}
}%
{\color[rgb]{0,0,0}\put(8026,-3811){\dashbox{60}(300,300){}}
}%
{\color[rgb]{0,0,0}\put(7501,-3661){\dashbox{60}(675,675){}}
}%
{\color[rgb]{0,0,0}\put(9676,-3136){\dashbox{60}(300,300){}}
}%
{\color[rgb]{0,0,0}\put(2026,-7486){\dashbox{60}(300,300){}}
}%
{\color[rgb]{0,0,0}\put(3676,-6811){\dashbox{60}(300,300){}}
}%
{\color[rgb]{0,0,0}\put(4351,-6811){\dashbox{60}(300,300){}}
}%
{\color[rgb]{0,0,0}\put(3676,-7486){\dashbox{60}(300,300){}}
}%
{\color[rgb]{0,0,0}\put(4351,-7486){\dashbox{60}(300,300){}}
}%
{\color[rgb]{0,0,0}\put(3826,-7336){\dashbox{60}(675,675){}}
}%
{\color[rgb]{0,0,0}\put(7351,-6811){\dashbox{60}(300,300){}}
}%
{\color[rgb]{0,0,0}\put(8026,-6811){\dashbox{60}(300,300){}}
}%
{\color[rgb]{0,0,0}\put(9676,-7486){\dashbox{60}(300,300){}}
}%
{\color[rgb]{0,0,0}\put(7351,-7486){\dashbox{60}(300,300){}}
      }%
      {\color[rgb]{0,0,0}\put(8026,-7486){\dashbox{60}(300,300){}}
      }%
      {\color[rgb]{0,0,0}\put(7501,-7336){\dashbox{60}(675,675){}}
      }%
      {\color[rgb]{0,0,0}\put(9676,-9136){\dashbox{60}(300,300){}}
      }%
      {\color[rgb]{0,0,0}\put(8026,-9136){\dashbox{60}(300,300){}}
      }%
      {\color[rgb]{0,0,0}\put(3676,-9136){\dashbox{60}(300,300){}}
      }%
      {\color[rgb]{0,0,0}\put(2026,-9136){\dashbox{60}(300,300){}}
      }%
    \end{picture}%
    \caption{\label{fig2} The squares $Q^*$ of the first four generations 
in the construction in Example~\ref{ex-Jana} are
drawn by broken lines, and solid lines mark the set $K_3$.}
  \end{center}
\end{figure}

\begin{example} \label{ex-Jana}
Let $\{c_n\}_{n=0}^\infty$ be a strictly decreasing sequence such that
$0<c_0\le\frac{1}{3}$ and $\lim_{n\to\infty}c_n=0$. We shall construct a Cantor 
set $\Kt=\bigcap_{n=0}^\infty\Kt_n \subset[0,1+c_0]=\Kt_0$ inductively as follows.
The $n$th generation $\Kt_n$ consists of $2^n$ closed intervals of length 
$\al_n=2^{-n}(1+c_n)$, $n=0,1,\ldots$, and is obtained by removing
the open middle subinterval of length $\al_{n-1}-2\al_n$ from each
interval constituting $\Kt_{n-1}$.
It is easy to see that $\Kt$ has length 1, and in fact
\begin{equation}  \label{eq-length-Kt}
   \La_1([0,\al_n]\cap\Kt) = \lim_{k\to\infty} 2^{k-n} \al_k
       = \lim_{k\to\infty} 2^{k-n} 2^{-k} (1+c_k) = 2^{-n}, \quad n=0,1,\ldots,
\end{equation}
where $\La_1$ is the 1-dimensional Lebesgue measure.

Set $K_n=\Kt_n\times\Kt_n$ and $K=\Kt\times\Kt$.
The set $K_n$ consists of $4^n$ closed squares $Q_{n,j}$, $j=1,\ldots,4^n$, and
$K$ has area 1, by \eqref{eq-length-Kt}. Let 
\[
    \Om= (-1,3)^2\setm K \subset\R^2.
\]
We shall now construct a set $K^*\subset K$ with area 1 and
$\bCp(K^*;\Om)=0$ simultaneously for all $p>1$.
Fix $n\ge0$ and let $Q=Q_{n,j}$ be one of the $4^{n}$ closed squares
of sidelength $\al_{n}$, constituting $K_n$.
Let $Q^*=Q_{n,j}^*$ be the square concentric with $Q$ and of sidelength
$\be_n = \al_{n}-2\al_{n+1}+2\al_{n+2} > 2 \al_{n+2} > 2^{-n-1}$, see Figure~\ref{fig2}.
The square $Q^*$ contains four squares of sidelengths $\al_{n+2}$ from the
set $K_{n+2}$, each of them belonging to a different component of $K_{n+1}$.

Let $u_Q$ be a Lipschitz function supported in $Q$ and such that $u_Q=1$
on $Q^*$ and $|\grad u_Q|\le 2/(\alp_n - \be_n)=1/(\al_{n+1}-\al_{n+2})$, e.g.\ 
\[
u_Q(x)=\frac{\max\{1-\dist(x,Q^*),0\}}{\al_{n+1}-\al_{n+2}}.
\]

Then
\[
 \int_\Om |\grad u_Q|^p\,d\La_2 \le \frac{\La_2(Q\cap\Om)}{(\al_{n+1}-\al_{n+2})^p}.
\]
By translation and \eqref{eq-length-Kt}  we see that 
\begin{equation}
   \La_2(Q\cap\Om) = \La_2([0,\al_{n}]^2\setm K)  = \al_n^2 - 4^{-n}  \label{eq-10.2} 
        = 4^{-n} (2c_n+c_n^2) \le 3\cdot 4^{-n} c_n. 
\end{equation}
Since $\al_{n+1}-\al_{n+2} = 2^{-n-2} (1+2c_{n+1}-c_{n+2}) > 2^{-n-2}$, this yields
\begin{equation*}  
\int_\Om |\grad u_Q|^p\,d\La_2 
   \le 3\cdot 2^{p(n+2)}4^{-n} c_{n}.
\end{equation*}
We also have, using  \eqref{eq-10.2} again, that
\[   
\int_\Om |u_Q|^p\,d\La_2 \le \La_2(Q\cap\Om)  \le 3\cdot 4^{-n} c_n.
\]
Next, let
\begin{align*}
K_n^* &= \bigcup_{j=1}^{4^{n}} Q^*_{n,j} \quad \text{and} \quad
u_n =\sum_{j=1}^{4^{n}} u_{Q_{n,j}}, \quad n=0,1,\ldots.
\end{align*}
From the last two estimates we conclude that
\[ 
  \|u_n\|^p_{\Np(\Om)}\le 4^{n} ( 3\cdot 2^{p(n+2)} 4^{-n} c_{n}  
                  + 3\cdot 4^{-n} c_{n})  \le 4\cdot 2^{p(n+2)} c_{n}
\]
and hence
\begin{equation}   \label{eq-norm-un}
    \|u_n\|_{\Np(\Om)} \le 4^{1/p} 2^{n+2}  c_{n}^{1/p} \le 16 \cdot 2^{n}  c_{n}^{1/p}.
\end{equation}
Finally, set
\[ 
K^* = \bigcap_{k=0}^\infty \bigcup_{n=k}^\infty K^*_n
     \quad \text{and} \quad v_k=\sum_{n=k}^\infty u_n, \quad k=0,1,\ldots.
\]
As $K_n^* \subset K_n$ for each $n$, we see that $K^* \subset K$.
Moreover, for each $k$, the function $v_k$ is admissible in the definition
of $\bCp(K^*;\Om)$ and hence by \eqref{eq-norm-un},
\begin{align*}
\bCp(K^*;\Om)^{1/p} &\le \|v_k\|_{\Np(\Om)}
  \le \sum_{n=k}^\infty \|u_n\|_{\Np(\Om)} \le 16 \sum_{n=k}^\infty  2^{n}  c_{n}^{1/p}.
\end{align*}
Choosing $c_n=2^{-n^2}$ we get
\[
\bCp(K^*;\Om)^{1/p} \le 16 \sum_{n=k}^\infty  2^{n-n^2/p}.
\]
The sum in the right-hand side converges,
since $2^{n-n^2/p} < 2^{-n}$ if $n > 2p$, and thus
the right-hand side tends to $0$ as $k \to \infty$.
Hence $\bCp(K^*;\Om)=0$.
(In fact this is true for all $p>1$ as long as $(\log c_n)/n \to -\infty$
as $n \to \infty$.) 

It remains to show that $K^*$ has area $1$.
Let $k\ge0$ be fixed and consider the set
\begin{equation}   \label{eq-K-cap-K*}
     K \cap \bigcup_{n=k}^\infty K^*_n. 
\end{equation}
The $(k+2)$-th generation of $K$ is made up of $4^{k+2}$ parts and $K \cap  K^*_k$ 
consists of exactly $4^{k+1}$ of these parts.  Hence 
\[
     \La_2(K \cap K^*_k) = \frac{4^{k+1}}{4^{k+2}} \La_2(K) = \frac14. 
\]
Similarly, the $(k+3)$-th generation of $K$ is made up of $4^{k+3}$ parts and 
$K \cap  K^*_{k+1}$ consists of exactly $4^{k+2}$ of these parts, however one fourth
of those are already contained in $K \cap  K^*_k$, and thus 
\[
     \La_2(K \cap (K^*_{k+1} \setm K^*_k)) = \frac{3\cdot 4^{k+1}}{4^{k+3}} \La_2(K) 
     = \frac{3}{4}\cdot \frac14. 
\]
Proceeding in the same way, we see that
\[
     \La_2\biggl(K \cap \biggl(K^*_{m} \setm \bigcup_{n=k}^{m-1} K^*_n\biggr)\biggr) 
     = \frac{3^{m-k}\cdot 4^{k+1}}{4^{m+2}} \La_2(K) 
     = \biggl(\frac{3}{4}\biggr)^{m-k}  \frac14, \quad m=k,k+1,\ldots. 
\]
Summing up we obtain 
\[
     \La_2\biggl(K \cap \bigcup_{n=k}^{\infty} K^*_n\biggr)
       = \biggl(1 + \frac{3}{4} + \biggl(\frac{3}{4}\biggr)^2 + \ldots\biggr) \frac{1}{4} = 1.
\]
From which we conclude that
\[
    \La_2(K^*)=\La_2(K \cap K^*) = \lim_{k \to \infty} 
     \La_2\biggl(K \cap \bigcup_{m=k}^{\infty} K^*_m\biggr) = 1.
\]

On the other hand, for $p>1$, we have $\bCp(K;\Om)\ge \bCp(\Kt\times\{0\};\Om) >0$,
since every $u\in\Np(\Om)$ admissible in the definition of 
$\bCp(\Kt\times\{0\};\Om)$ gives by reflection in the $x$-axis rise to 
$v\in\Np((-1,3) \times (-1,1))$ with
\[
    \|v\|^p_{\Np((-1,3) \times (-1,1))} \le 2 \|u\|^p_{\Np(\Om)}.
\]
It is well known (see e.g.\ Theorem~2.26 in Heinonen--Kilpel\"ainen--Martio~\cite{HeKiMa})
that sets of \p-capacity zero in $\R^n$ have Hausdorff dimension at most
$n-p$ if $p \le n$. (If $n >p$ we instead use that the \p-capacity is zero only for 
the empty set.) Hence as $\La_1(\Kt)>0$, we conclude that $\bCp(\Kt\times\{0\};\Om) >0$.

On the contrary, the set $K^*$ constructed above has positive area (and full measure in $K$)
but zero $\bCp(\,\cdot\,;\Om)$-capacity for all $p\ge1$.
\end{example}

In view of the above examples, the following is a natural question to ask.

\begin{openprob}
Assume that $\Om$ is as in Section~\ref{sect-bdyM-Om} and let $E \subset \bdy \Om$ be 
the set of inaccessible boundary points, see below. Is it then true that
\begin{enumerate}
\item \label{op-i}
  $\bCp(E;\Om)=0$, and that
\item \label{op-ii}
  $\Hp f = \Hp (f+h)$ for all $f \in C(\bdy \Om)$ and $h : \bdy \Om \to \eR$ 
such that $h=0$ on $\bdy \Om \setm E$?
\end{enumerate}
\end{openprob}

A point $x \in \bdy \Om$ is \emph{inaccessible} if there is
no curve $\ga:[0,1] \to \overline{\Om}$ such that $\ga([0,1)) \subset \Om$ and 
$\ga(1)=x$. Here $\ga$ is \emph{not} required to be rectifiable.

A positive answer to \ref{op-i} directly yields a positive
answer to \ref{op-ii}, by Proposition~\ref{prop-cts-resolve-Om}.

In the linear case $p=2$ on unweighted $\R^n$, part~\ref{op-ii} is true. 
This can be seen by observing that the Perron solution at a point $y \in \Om$ is the
expected value of the first point $x\in\bdry\Om$ which the Brownian
motion (starting at $y$) hits, and this point is almost surely not in $E$.
We are not aware of any nonprobabilistic proof of this fact.

In Example~\ref{ex-comb-wide-thin} we saw that both~\ref{op-i}
and~\ref{op-ii} are true for the topologist's comb in the nonlinear case.
For the topologist's comb a more general invariance result is obtained in
A.~Bj\"orn~\cite{ABcomb}.

\section{Generalized Perron solutions for domains in \texorpdfstring{$\Omm$}{}}
\label{sect-gen-Perron}

\emph{We assume, in this section, that $G$ is a 
bounded domain which is finitely connected at the boundary 
and that\/ $\Om \subset G$ is a nonempty open subset
with $\Cp(X \setm \Om)>0$. Recall also the standing assumptions from the 
end of Section~\ref{sect-newcap}.}
\medskip

Let us take another look at Examples~\ref{ex-comb-wide-thin} and~\ref{ex-double-cone}.
In Example~\ref{ex-double-cone} there are two directions of reaching every boundary point
in the slits 
\[
   A=\{0\}\times(0,1] \quad \text{and} \quad S^\pm_j=\{\pm2^{-j}\}\times (0,1], \quad j=1,2,\ldots,
\]
but since $\Om$ is not finitely connected at the boundary we cannot use
the Mazurkiewicz boundary results from Sections~\ref{sect-Perron-Omm}
and~\ref{sect-bdyM-Om}.  A way around it is to consider $\Om$ as a subdomain of a
larger open set $G$, and equip $\Om$ with the restriction of its 
Mazurkiewicz distance. In this section we consider such an approach.
Since we  now have to deal with two open sets, the notation becomes
more cumbersome,  which is why we avoided this generality in 
Sections~\ref{sect-Perron-Omm} and~\ref{sect-bdyM-Om}.
However, there are \emph{no} additional technical difficulties.

We equip $G$ with its Mazurkiewicz distance and consider its
closure $\clGM$. This closure is compact by Theorem~\ref{thm-clOmm-cpt},
since $G$ is finitely connected at the boundary. Throughout this section the Mazurkiewicz 
distance is \emph{always} taken with respect to $G$,
and to avoid misunderstandings we write $\dGM$ instead of  $\dM$. We  also 
write $\OmGm$, $\clOmGm$ and $\bdyGM\Om$ when we equip $\Om$ with the 
distance $\dGM$ and take its closure and boundary in $\clGM$.

\begin{deff}  \label{def-Perron-VOmm}
Given a function $f : \bdyGM\Om \to \eR$, let $\UU_f(\OmGm)$ be the set of all 
superharmonic functions $u$ on $\Om$, bounded from below, such that 
\[ 
   \liminf_{\Om \ni y \dGMto x} u(y) \ge f(x) \quad \text{for all } x \in \bdyGM\Om.
\] 
The \emph{generalized upper Perron solution} of $f$ is defined by
\[ 
    \uHpind{\OmGm} f (x) = \inf_{u \in \UU_f(\OmGm)}  u(x), \quad x \in \Om.
\]
The \emph{generalized lower Perron solution} $\lHpind{\OmGm} f$ is
defined similarly, or by $\lHpind{\OmGm} f = - \uHpind{\OmGm}(-f)$.

If $\uHpind{\OmGm} f = \lHpind{\OmGm} f$, then we let  $\Hpind{\OmGm} f := \uHpind{\OmGm} f$
and $f$ is said to be \emph{resolutive} with respect to $\OmGm$.
\end{deff}

The results in Sections~\ref{sect-Perron-Omm} and~\ref{sect-bdyM-Om}
can all be formulated in this generality, and the proofs remain the same. Let us 
formulate these results.  

\begin{thm} \label{thm-Np-invariance-VOmm}
Assume that $h: \bdyGM \Om \to \eR$ is zero $\bCp(\,\cdot\,;\OmGm)$-q.e., 
If either $f \in C(\bdyGM\Om)$, or $f: \clOmGm \to \eR$ is a bounded 
$\bCp(\,\cdot\,;\OmGm)$-quasi\-con\-tin\-u\-ous function such that $f|_{\Om} \in \Np(\Om)$, 
then $f$ and  $f+h$ are resolutive with respect to\/ $\OmGm$ and
\[
     \Hpind{\OmGm} (f+h) = \Hpind{\OmGm} f = \oHpind{\Om} f.
\]
\end{thm}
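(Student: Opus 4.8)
The plan is to observe that the entire machinery of Sections~\ref{sect-dM}--\ref{sect-bdyM-Om} goes through verbatim once we replace the intrinsic Mazurkiewicz distance $\dM$ on $\Om$ by the restriction $\dGM$ of the Mazurkiewicz distance of the ambient domain $G$. The key structural facts we need are: (i) $\clGM$ is compact (this is Theorem~\ref{thm-clOmm-cpt} applied to $G$, which is finitely connected at the boundary), and hence $\clOmGm$, being a closed subset of $\clGM$, is also compact; (ii) the metric $\dGM$ on $\Om$ is locally equivalent to $d$ and induces the same rectifiable curves and the same arc lengths (this follows since $d\le\dGM\le\din^G$ locally, by the analog of Lemma~\ref{lem-d-dm-din} applied inside $G$, together with Lemma~\ref{lem-arc-length}); consequently $\Np(\OmGm)=\Np(\Om)$, the class of superharmonic functions on $\OmGm$ coincides with that on $\Om$, and $\oHpind{\OmGm}f=\oHpind{\Om}f$ for $f\in\Np(\Om)$, exactly as in Section~\ref{sect-superharm}. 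The map $\Phi$ from the end of Section~\ref{sect-dM} is now replaced by the $1$-Lipschitz extension $\Phi_G\colon\clOmGm\to\clOm$ of the identity on $\Om$; since still $d\le\dGM$, this extension exists and is $1$-Lipschitz.

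First I would record the $\OmGm$-analog of Proposition~\ref{prop-Np0}: namely $\Np_0(\Om)=\Np_0(\Om;\mu_0)=\Np_0(\OmGm)$, where $\Np_0(\OmGm):=\Np_0(\OmGm;\clOmGm)$. The proof is the same as for Proposition~\ref{prop-Np0}: the only place the geometry entered was the argument that a curve $\ga$ in $X$ leaving $\Om$ gives, near its exit time, a $1$-Lipschitz curve into $\Omm$ whose completion-limit lies on $\bdym\Om$; here one instead uses that such a curve, restricted to $\Om$, is arc-length parameterized with respect to $\dGM$ (Lemma~\ref{lem-arc-length} again, now inside $G$), extends continuously to $\clOmGm$, and its endpoint cannot lie in $\Om$, hence lies in $\bdyGM\Om$. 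From this and the $\OmGm$-version of Proposition~\ref{prop-outercap} (valid since $\Np(\OmGm)=\Np(\Om)$ consists of quasicontinuous functions) one gets the $\OmGm$-analog of Proposition~\ref{prop-Np0-qcont-Omm}: functions in $\Np_0(\OmGm)$, extended by $0$, are $\bCp(\,\cdot\,;\OmGm)$-quasicontinuous.

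Next I would prove the $\OmGm$-versions of the comparison principle Proposition~\ref{prop-comparison} and Corollary~\ref{cor-uHp-lHp} — the proof is unchanged, using compactness of $\clOmGm$ to extract a finite subcover of balls in the $\dGM$-metric and then invoking Kinnunen--Martio~\cite[Theorem~7.2]{KiMa02} on an exhausting sequence $\Om_k\Subset\Om$ — and then the core resolutivity statement, the $\OmGm$-analog of Theorem~\ref{thm-Newt-resolve-Omm}: if $f\colon\clOmGm\to\eR$ is $\bCp(\,\cdot\,;\OmGm)$-quasicontinuous with $f|_\Om\in\Np(\Om)$, then $f$ is resolutive with respect to $\OmGm$ and $\Hpind{\OmGm}f=\oHpind{\Om}f$. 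Here one uses the obstacle-problem convergence Proposition~\ref{prop-3.2} and the $\OmGm$-version of Lemma~\ref{lem-Newt} (obtained by summing admissible functions for $\bCp(U_k;\OmGm)$), exactly as in the proof of Theorem~\ref{thm-Newt-resolve-Omm}; the only change is cosmetic, replacing $\dMto$ by $\dGMto$ and $\bdym\Om$ by $\bdyGM\Om$ throughout. With that in hand, Theorem~\ref{thm-Np-invariance-VOmm} follows just as Proposition~\ref{prop-Np-invariance-Omm} and Theorem~\ref{thm-cts-resolve-Omm} followed from Theorem~\ref{thm-Newt-resolve-Omm}: for the quasicontinuous case, extend $h$ by $0$ to $\Om$, use the outer-capacity property to see $h$ (hence $f+h$) is $\bCp(\,\cdot\,;\OmGm)$-quasicontinuous, note $f+h=f$ on $\Om$ so $\oHpind{\Om}(f+h)=\oHpind{\Om}f$, and apply the $\OmGm$-resolutivity theorem to both; for $f\in C(\bdyGM\Om)$, approximate uniformly by Lipschitz functions on the compact space $\clOmGm$ (restrictions of which lie in $\Np(\Om)$), apply the previous case, and pass to the uniform limit using that $\uHpind{\OmGm}$ and $\lHpind{\OmGm}$ respect uniform convergence by their very definition.

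The part requiring the most care — though it is still routine — is checking that the ambient domain $G$ enters only through the \emph{topology} of $\clGM$ and never through the Newtonian/capacity structure, which is genuinely attached to $(\Om,d,\mu)=(\OmGm,\dGM,\mu)$ alone; in particular one must verify that $\Np_0(\OmGm)$ (defined via zero boundary values on $\clOmGm$, a subset of $\clGM$) still coincides with $\Np_0(\Om)$, because a curve in $X$ exiting $\Om$ need not exit $G$, yet its $\dGM$-arc-length-parameterized restriction to $\Om$ still has a limit point in $\clGM\setm\Om=\bdyGM\Om$ (not merely in $\bdy\Om$), which is exactly the point where the argument differs from Section~\ref{sect-NpOmm}. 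Once that is nailed down, nothing else in the proof sees $G$ at all, and the theorem follows.
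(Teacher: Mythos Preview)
Your proposal is correct and follows essentially the same approach as the paper: the authors simply state that the proofs of Sections~\ref{sect-Perron-Omm} and~\ref{sect-bdyM-Om} go through unchanged with $\dM$ replaced by $\dGM$, and they single out precisely the point you flagged as needing care, namely the equality $\Np_0(\Om)=\Np_0(\Om;\clGM)$, which they deduce from Proposition~\ref{prop-Np0} via the chain through $\Np_0(G)$ and $\Np_0(G^M)$. They also remark that one cannot simply apply the earlier results with $X$ replaced by $\clGM$, since $\clGM$ need not be doubling or support a Poincar\'e inequality, which is why the argument must be rerun rather than quoted.
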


\begin{cor} \label{cor-unique-VOmm}
Assume that either $f \in C(\bdyGM\Om)$, or that $f: \clOmGm \to \eR$ is a bounded 
$\bCp(\,\cdot\,;\OmGm)$-quasi\-con\-tin\-u\-ous function
such that $f|_{\Om} \in \Np(\Om)$. If $u$ is a bounded \p-harmonic function
in\/ $\Om$ and if there is a set  $E \subset \bdyGM\Om$ with $\bCp(E;\OmGm)=0$ such that
\[
    \lim_{\Om \ni y \dGMto  x} u(y) = f(x) \quad \text{for all } x \in \bdyGM\Om \setm E,
\]
then $u=\Hpind{\OmGm} f$.
\end{cor}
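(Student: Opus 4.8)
The plan is to imitate the proof of Corollary~\ref{cor-unique-Newt-Omm} essentially verbatim, with Theorem~\ref{thm-Np-invariance-VOmm} playing the role that Proposition~\ref{prop-Np-invariance-Omm} and Theorem~\ref{thm-cts-resolve-Omm} played there. First I would normalize: since $u$ is bounded and $f$ is bounded on $\bdyGM\Om$, after adding a sufficiently large constant to both and then rescaling them simultaneously by a positive factor, I may assume that $0\le u\le 1$ in $\Om$ and $0\le f\le 1$ on $\bdyGM\Om$. This is harmless because every hypothesis in the statement --- boundedness of $u$ and $f$, the $\bCp(\,\cdot\,;\OmGm)$-quasicontinuity (respectively continuity) of $f$, the membership $f|_\Om\in\Np(\Om)$, the condition $\bCp(E;\OmGm)=0$, and the boundary-limit relation --- is invariant under affine changes $t\mapsto at+b$ with $a>0$.

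Next I would sandwich $u$ between Perron solutions. A bounded \p-harmonic function in $\Om$ is at once superharmonic and subharmonic there. For $x\in\bdyGM\Om\setm E$ the hypothesis gives $\liminf_{\Om\ni y\dGMto x}u(y)=\limsup_{\Om\ni y\dGMto x}u(y)=f(x)$, while for $x\in E$ the normalization $0\le u,f\le 1$ yields the trivial inequalities $\liminf_{\Om\ni y\dGMto x}u(y)\ge 0\ge f(x)-1$ and $\limsup_{\Om\ni y\dGMto x}u(y)\le 1\le f(x)+1$. Consequently $u\in\UU_{f-\chi_E}(\OmGm)$ and $u\in\LL_{f+\chi_E}(\OmGm)$, and hence
\[
    u\ge\uHpind{\OmGm}(f-\chi_E)\qquad\text{and}\qquad u\le\lHpind{\OmGm}(f+\chi_E).
\]

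Finally I would invoke Theorem~\ref{thm-Np-invariance-VOmm} with the perturbations $h=-\chi_E$ and $h=\chi_E$, which are both zero $\bCp(\,\cdot\,;\OmGm)$-q.e.\ since $\bCp(E;\OmGm)=0$; in whichever of the two admissible cases $f$ falls under, this gives that $f\pm\chi_E$ is resolutive with respect to $\OmGm$ and $\Hpind{\OmGm}(f\pm\chi_E)=\Hpind{\OmGm}f$. Combining this with the two displayed inequalities,
\[
    u\ge\uHpind{\OmGm}(f-\chi_E)=\Hpind{\OmGm}f=\lHpind{\OmGm}(f+\chi_E)\ge u,
\]
so $u=\Hpind{\OmGm}f$. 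I do not expect a genuine obstacle here: the argument is exactly the one already carried out for $\Omm$ in Section~\ref{sect-bdyM-Om}, transferred word for word to the $\OmGm$ setting as licensed by the remarks preceding Definition~\ref{def-Perron-VOmm}. The only points that need a moment's care are checking that the normalization preserves every hypothesis and quoting Theorem~\ref{thm-Np-invariance-VOmm} in the branch matching the type of $f$ at hand.
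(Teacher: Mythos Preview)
Your proposal is correct and follows exactly the paper's approach: the paper explicitly states that the proofs in Section~\ref{sect-gen-Perron} are the same as those in Sections~\ref{sect-Perron-Omm} and~\ref{sect-bdyM-Om}, and the proof of Corollary~\ref{cor-unique-Newt-Omm} proceeds precisely by normalizing to $0\le u,f\le 1$, observing $u\in\UU_{f-\chi_E}$ and $u\in\LL_{f+\chi_E}$, and then invoking the resolutivity/invariance results to close the sandwich. The only cosmetic difference is that the paper cites Proposition~\ref{prop-Np-invariance-Omm} and Theorem~\ref{thm-cts-resolve-Omm} separately for the two cases, whereas you cite the combined Theorem~\ref{thm-Np-invariance-VOmm}, which is exactly what is available in the generalized setting.
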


As already mentioned, the proofs of these results are the same as the 
proofs given in Section~\ref{sect-Perron-Omm}. 
Let us just point out that the following fundamental
equality follows from Proposition~\ref{prop-Np0},
\begin{align*}
    \Np_0(\Om)& =\{f|_\Om : f  \in \Np_0(G) 
                    \text{ and $f=0$ in } G \setm \Om\} \\
          & =\{f|_\Om : f  \in \Np_0(G^M) 
                    \text{ and $f=0$ in } G \setm \Om\} 
           =\Np_0(\Om;\clGM).
\end{align*}
Note also that the above results could not be obtained as direct
consequences of the results in Section~\ref{sect-bdyM-Om} by replacing 
$X$ with $\clGM$ since the space $\clGM$ need not satisfy the standing
assumptions about doubling and Poincar\'e inequality.

We end this section with a demonstration of the described
technique in the case of the double comb. It makes it possible to treat discontinuities 
at finitely many slits $S_j$,  including the central one $A$.
This procedure can be iterated by adding more and more open slits, 
so that finally the whole comb can be treated, see A.~Bj\"orn~\cite{ABcomb}
for more details. 

\begin{example} \label{ex-gen-Perron-double-comb}
Let $\Om \subset \R^2$ be the double comb as in Example~\ref{ex-double-cone} and let 
\[
G=(-1,1)^2 \setm (\itoverline{A}\cup\bigcup_{j\in J}\itoverline{S}^\pm_j),
\] 
where $J$ is a finite set of indices and $A=\{0\}\times(0,1]$.
Equip $G$ with its Mazurkiewicz distance $\dGM$
and  consider $\Om$ as an open subset of $G^M$ in this new metric.
Observe that $G$ is finitely connected at  the boundary.
The  boundary $\bdyGM\Om$ will be the same as $\bdym\Om$ apart from
that the boundary points in $A\cup\bigcup_{j\in J}S^\pm_j$
will be split into two boundary points each.

Let $f$ be the ``jump'' function from \eqref{eq-ex-double-cone},
and set $\ft=0$ on the left copy of $A$ and $\ft=f$ otherwise.
Then $\ft\in\Np(\clOmGm)$ is continuous in $\clOmGm$. 
Thus, it is resolutive with respect to $\OmGm$ by 
Theorem~\ref{thm-Np-invariance-VOmm},  and can be perturbed arbitrarily 
on (both copies) of $A$ without changing the obtained Perron solution.
Since $\bCp(A,\OmGm)=0$ and $f=\ft$ except at the left copy of $A$, 
the resolutivity of $f$ with respect to $\OmGm$ follows.
Finally, it is easy to see that $\Hpind{\Om} f = \Hpind{\OmGm} f$, 
and thus $f$ is resolutive also with respect to $\Om$.

Contrary to Example~\ref{ex-double-cone}, this method also allows
us to treat functions with different values on the left and right copies
of the slits $S^\pm_j$, $j\in J$, e.g.\ similar to the ``thickened'' slits 
in Example~\ref{ex-comb-wide-thin}.
\end{example}

\section*{Appendix. Comparison of capacities}

\setcounter{equation}{0}
\setcounter{section}{1}
\setcounter{thm}{0}
\renewcommand{\thesection}{\textup{\Alph{section}}}%
\addcontentsline{toc}{section}{Appendix}

\emph{In this appendix we do {\bf not} require the assumptions
from the end of Section~\ref{sect-newcap} to hold.}

\medskip

The focus of this appendix is to compare the new capacity with the two natural capacities on 
$\overline{\Om}$ equipped with $\mu$ resp.\  $\mu_0$.

\begin{prop} 
If $E \subset \overline{\Om}$, then
\[
     \Cp(E;(\overline{\Om};\mu_0)) \le  \Cp(E;\overline{\Om}) \le \Cp(E).
\]
\end{prop}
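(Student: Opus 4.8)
The plan is to prove both inequalities by the obvious monotonicity observations: replacing $\mu$ by the smaller measure $\mu_0$ can only decrease the Newtonian norm (hence the capacity), and restricting to the smaller ambient space $\overline{\Om} \subset X$ can only decrease it as well. No deep input is needed; the points to be careful about are that the Sobolev capacity here is defined with \emph{genuine} (Borel) upper gradients, and that upper gradients depend only on the metric $d$, not on the measure, and restrict to subspaces.

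For the first inequality $\Cp(E;(\overline{\Om};\mu_0)) \le \Cp(E;\overline{\Om})$, I would take an arbitrary $u \in \Np(\overline{\Om})$ (with $\overline{\Om}$ equipped with $\mu$) that is admissible for $\Cp(E;\overline{\Om})$, i.e.\ $u=1$ on $E$, together with an upper gradient $g\in L^p(\overline{\Om};\mu)$ of $u$. The key point is that the upper gradient condition~\eqref{ug-cond} refers only to the metric $d$ and to curves in $\overline{\Om}$, and $g$ is a nonnegative Borel function, so $g$ is also an upper gradient of $u$ with respect to $(\overline{\Om};\mu_0)$. Since $\mu_0(A)=\mu(A\cap\Om)\le\mu(A)$ for every $A\subset\overline{\Om}$, we get $\int_{\overline{\Om}}|u|^p\,d\mu_0 \le \int_{\overline{\Om}}|u|^p\,d\mu$ and $\int_{\overline{\Om}}g^p\,d\mu_0 \le \int_{\overline{\Om}}g^p\,d\mu$. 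Hence $u$ is admissible for $\Cp(E;(\overline{\Om};\mu_0))$ with $\|u\|_{\Np(\overline{\Om};\mu_0)}\le\|u\|_{\Np(\overline{\Om};\mu)}$, and taking the infimum over all such $u$ yields the inequality.

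For the second inequality $\Cp(E;\overline{\Om}) \le \Cp(E)$, I would first dispose of the trivial case $\Cp(E)=\infty$, and otherwise take an arbitrary $u\in\Np(X)$ with $u=1$ on $E$ and an upper gradient $g\in L^p(X)$ of $u$, and restrict them to $\overline{\Om}$. Every nonconstant compact rectifiable curve in $\overline{\Om}$ is such a curve in $X$ with the same arc length, so $g|_{\overline{\Om}}$ is an upper gradient of $u|_{\overline{\Om}}$ on $\overline{\Om}$; and since $\overline{\Om}\subset X$, the integrals over $\overline{\Om}$ are dominated by those over $X$. Thus $u|_{\overline{\Om}}\in\Np(\overline{\Om})$ is admissible for $\Cp(E;\overline{\Om})$ (note $E\subset\overline{\Om}$, so $u|_{\overline{\Om}}=1$ on $E$) with $\|u|_{\overline{\Om}}\|_{\Np(\overline{\Om})}\le\|u\|_{\Np(X)}$, and taking the infimum finishes the proof. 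The only ``obstacle'' worth flagging is precisely the bookkeeping in the previous sentence: one must remember that upper gradients restrict correctly because arc length is computed from the metric alone, and that working with genuine rather than $p$-weak upper gradients sidesteps any measure-dependent measurability issues when passing from $\mu$ to $\mu_0$.
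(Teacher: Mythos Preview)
Your proof is correct and is precisely the monotonicity argument the paper has in mind; the paper itself simply declares the proposition ``trivial'' without writing out the details. Your care about Borel upper gradients being measure-independent and restricting to subspaces is exactly the bookkeeping that makes the trivial argument rigorous.
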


This is trivial, and it is easy to find examples where strict
inequalities hold by e.g.\  Examples~\ref{ex-Cantor-2} and~\ref{ex-cusp-new} respectively.

For the new capacity the situation is a little more complicated.

\begin{prop} \label{prop-newcap-compare-1}
Let $G \subset \overline{\Om}$ be relatively open. Then
\[
	\bCp(G;\Om) \le      \Cp(G;(\overline{\Om};\mu_0)).
\]
If $X$ is proper and continuous functions are dense in $\Np(\clOm,\mu_0)$, then 
\[
   \bCp(E;\Om) \le      \Cp(E;(\overline{\Om};\mu_0)) \quad \text{for all } E\subset\clOm.
\]
\end{prop}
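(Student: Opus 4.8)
The plan is to prove the two assertions separately, each following the same two-step pattern: first construct an admissible function for $\bCp(\,\cdot\,;\Om)$ out of an admissible function for $\Cp(\,\cdot\,;(\overline{\Om};\mu_0))$, and then compare norms, using $\mu_0(\bdy\Om)=0$ so that the $L^p$-norms over $\overline{\Om}$ and over $\Om$ agree. For the first inequality, let $G\subset\overline{\Om}$ be relatively open and let $u\in\Np(\overline{\Om};\mu_0)$ be admissible for $\Cp(G;(\overline{\Om};\mu_0))$, i.e.\ $u\ge1$ on $G$; by truncation we may assume $0\le u\le1$. Restricting $u$ to $\Om$, I would check that $u|_\Om\in\A_G$ in the sense of Definition~\ref{deff-bCp}: clearly $u\ge1$ on $G\cap\Om$, and for $x\in G\cap\bdy\Om$, since $G$ is relatively open in $\overline{\Om}$ there is $r_x>0$ with $B(x,r_x)\cap\overline{\Om}\subset G$, so $u\ge1$ on $B(x,r_x)\cap\Om$ and hence $\liminf_{\Om\ni y\to x}u(y)\ge1$. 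Thus $u|_\Om\in\A_G$, and since the minimal \p-weak upper gradient of $u$ on $\overline{\Om}$ restricts to a \p-weak upper gradient of $u|_\Om$ on $\Om$ (as recalled in the proof of Lemma~\ref{lem-fPhi}), together with $\mu_0(\bdy\Om)=0$, we get $\|u|_\Om\|_{\Np(\Om)}\le\|u\|_{\Np(\overline{\Om};\mu_0)}$. Taking the infimum over all such $u$ gives $\bCp(G;\Om)\le\Cp(G;(\overline{\Om};\mu_0))$.

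For the second assertion, assume now that $X$ is proper and continuous functions are dense in $\Np(\overline{\Om};\mu_0)$. Then, by Theorem~\ref{thm-quasicont} applied with the underlying proper space $\overline{\Om}$ equipped with $\mu_0$ in place of $X$, the capacity $\Cp(\,\cdot\,;(\overline{\Om};\mu_0))$ is an outer capacity: for every $E\subset\overline{\Om}$,
\[
   \Cp(E;(\overline{\Om};\mu_0)) = \inf_{\substack{G\supset E\\ G\text{ rel.\ open in }\overline{\Om}}} \Cp(G;(\overline{\Om};\mu_0)).
\]
Fix $E\subset\overline{\Om}$; we may assume $\Cp(E;(\overline{\Om};\mu_0))<\infty$. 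Given $\eps>0$, pick a relatively open $G\supset E$ with $\Cp(G;(\overline{\Om};\mu_0))<\Cp(E;(\overline{\Om};\mu_0))+\eps$. Then by the monotonicity in Proposition~\ref{prop-Cp}\,\ref{Cp-subset} and the first part of this proposition,
\[
   \bCp(E;\Om) \le \bCp(G;\Om) \le \Cp(G;(\overline{\Om};\mu_0)) < \Cp(E;(\overline{\Om};\mu_0))+\eps.
\]
Letting $\eps\to0$ completes the proof.

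The only genuinely delicate point is checking the applicability of the outer-capacity statement of Theorem~\ref{thm-quasicont}: that theorem is stated for a proper space $X$ in which continuous functions are dense in $\Np(X)$, and here we are applying it with the role of $X$ played by the (proper, since $X$ is proper) closed set $\overline{\Om}$ together with the measure $\mu_0$ — for which the density of continuous functions is precisely the standing hypothesis of the second half of the proposition. Once this is granted, everything else is the routine norm bookkeeping based on $\mu_0(\bdy\Om)=0$ and the restriction property of minimal \p-weak upper gradients, exactly as in the proofs of Lemmas~\ref{lem-fPhi} and~\ref{lem-Phicap-Omm}. I would also remark, in passing, that the first inequality of the proposition can fail for general (non-relatively-open) sets $E$ without the density hypothesis, which is why the two-tier formulation is needed.
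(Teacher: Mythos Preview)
Your proof is correct and follows essentially the same approach as the paper: for the first part you verify that functions admissible for $\Cp(G;(\overline{\Om};\mu_0))$ are admissible for $\bCp(G;\Om)$ (the paper states this in one line without spelling out the $\liminf$ check or the norm comparison), and for the second part you invoke the outer-capacity property of Theorem~\ref{thm-quasicont} applied to $(\overline{\Om},\mu_0)$ and then use monotonicity together with the first part. Your explicit discussion of why Theorem~\ref{thm-quasicont} applies to the space $(\overline{\Om},\mu_0)$ is a welcome clarification that the paper leaves implicit.
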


\begin{proof}
Since $G$ is relatively open in $\overline{\Om}$, every function admissible in the definition
of $\Cp(G;(\clOm,\mu_0))$ is also admissible for $\bCp(G;\Om)$.
Taking infimum over all such functions proves the first part.

As for the last part,  we may assume that $\Cp(E;(\overline{\Om};\mu_0))<\infty$.
Let $\eps>0$.  By Theorem~\ref{thm-quasicont} there is an open $G \supset E$ such that
$\Cp(G;(\overline{\Om};\mu_0)) < \Cp(E;(\overline{\Om};\mu_0)) + \eps$.
By monotonicty and the first part we see that
\[
  \bCp(E;\Om) \le \bCp(G ;\Om)\le \Cp(G;(\overline{\Om};\mu_0)) < \Cp(E;(\overline{\Om};\mu_0)) + \eps.
\]
Letting $\eps \to 0$ concludes the proof.
\end{proof}

Example~\ref{ex-comb-wide-thin} (together with 
Proposition~\ref{prop-outercap}) shows that the inequality 
in Proposition~\ref{prop-newcap-compare-1} can be strict. The following result shows 
that at the level of null sets Proposition~\ref{prop-newcap-compare-1} holds for all sets without 
any continuity assumptions.

\begin{prop} \label{prop-newcap-compare-0}
Assume that $X$ is proper. If $E \subset \overline{\Om}$ and
$\Cp(E;(\overline{\Om};\mu_0))=0$, then $\bCp(E;\Om)=0$.
\end{prop}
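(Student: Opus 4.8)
The plan is to reduce the statement to Proposition~\ref{prop-zero-cap-capacitable}, applied not to $X$ but to the closure $\overline{\Om}$ equipped with the restricted metric $d$ and the measure $\mu_0$, and then to combine this with the first (unconditional) inequality of Proposition~\ref{prop-newcap-compare-1}. Concretely, given $\eps>0$ I would first produce a \emph{relatively open} set $G\subset\overline{\Om}$ with $E\subset G$ and $\Cp(G;(\overline{\Om};\mu_0))<\eps$; then Proposition~\ref{prop-newcap-compare-1} yields $\bCp(G;\Om)\le\Cp(G;(\overline{\Om};\mu_0))<\eps$, and the monotonicity in Proposition~\ref{prop-Cp}\,\ref{Cp-subset} gives $\bCp(E;\Om)\le\bCp(G;\Om)<\eps$. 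Letting $\eps\to0$ then finishes the proof.

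The real content is thus the first step, and for that I would invoke Proposition~\ref{prop-zero-cap-capacitable} with the ambient space taken to be $(\overline{\Om},d,\mu_0)$ in place of $X$: since by hypothesis $\Cp(E;(\overline{\Om};\mu_0))=0$, that proposition provides, for each $\eps>0$, a set $G\supset E$ that is open in $\overline{\Om}$ (i.e.\ relatively open) with $\Cp(G;(\overline{\Om};\mu_0))<\eps$, which is exactly what is needed above.

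The one point that has to be checked before this citation is legitimate is that $(\overline{\Om},d,\mu_0)$ really is a proper metric measure space of the kind considered in Section~\ref{sect-prelim}. Properness is clear, since a closed bounded subset of $\overline{\Om}$ is closed and bounded in the proper space $X$, hence compact (and $\mu_0$ is a complete Borel measure on $\overline{\Om}$, its null sets being those $A\subset\overline{\Om}$ with $\mu(A\cap\Om)=0$). The only thing requiring an argument is that $0<\mu_0(B)<\infty$ for every ball $B=B(x_0,r)\cap\overline{\Om}$ with $x_0\in\overline{\Om}$: here $B(x_0,r)\cap\Om$ is a nonempty open subset of $X$ (nonempty because $x_0$ lies in the closure of $\Om$), hence contains a ball of $X$, so $\mu_0(B)=\mu(B(x_0,r)\cap\Om)>0$, while $\mu_0(B)\le\mu(B(x_0,r))<\infty$. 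I do not expect any serious obstacle beyond this bookkeeping: the heart of the matter---turning a capacitary null set into a small relatively open neighbourhood---is already contained in Proposition~\ref{prop-zero-cap-capacitable}, and the comparison of the two capacities on relatively open sets is already contained in Proposition~\ref{prop-newcap-compare-1}, so the argument is essentially a short chain of citations once the auxiliary space $(\overline{\Om},d,\mu_0)$ has been set up correctly.
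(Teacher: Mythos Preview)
Your proposal is correct and follows exactly the same route as the paper's proof: apply Proposition~\ref{prop-zero-cap-capacitable} to the proper space $(\overline{\Om},d,\mu_0)$ to obtain a relatively open $G\supset E$ with small $\Cp(\,\cdot\,;(\overline{\Om};\mu_0))$-capacity, then invoke the first inequality of Proposition~\ref{prop-newcap-compare-1} and monotonicity. Your additional verification that $(\overline{\Om},d,\mu_0)$ satisfies the standing hypotheses of Section~\ref{sect-prelim} (in particular that balls have positive $\mu_0$-measure) is a point the paper passes over silently, so your write-up is in fact slightly more careful than the original.
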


Examples~\ref{ex-comb-wide-thin}, \ref{ex-double-cone} and~\ref{ex-Cantor} 
all show that the converse implication does not hold. Recall that 
if $\mu$ is doubling then $X$ is complete if and only if $X$ is proper.

\begin{proof}
Let $\eps >0$. As $\overline{\Om}$ is proper, Proposition~\ref{prop-zero-cap-capacitable} 
shows that there is a relatively open set $G \supset E$ with
$\Cp(G;(\overline{\Om};\mu_0))< \eps$. By Theorem~\ref{prop-newcap-compare-1},
\[
   \bCp(E;\Om) \le \bCp(G;\Om) < \eps.
\]
Letting $\eps \to 0$ completes the proof.
\end{proof}

The next proposition follows directly from Proposition~\ref{prop-newcap-compare-1},
since in the definition of quasicontinuity we only consider
relatively open subsets of $\overline{\Om}$.

\begin{cor} \label{prop-qcont-imp}
Assume that $f \in \overline{\Om} \to \eR$ is
quasicontinuous with respect to\/ $(\overline{\Om};\mu_0)$.
Then $f$ is $\bCp(\,\cdot\,;\Om)$-quasi\-con\-tin\-u\-ous.
\end{cor}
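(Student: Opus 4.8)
The plan is to notice that $\bCp(\,\cdot\,;\Om)$-quasicontinuity (Definition~\ref{deff-bCp-qcont}) and quasicontinuity with respect to $(\overline{\Om};\mu_0)$ are defined using the same class of exceptional sets, namely relatively open subsets of $\overline{\Om}$, and that on precisely such sets the new capacity is dominated by $\Cp(\,\cdot\,;(\overline{\Om};\mu_0))$. The latter domination is exactly the first half of Proposition~\ref{prop-newcap-compare-1}, which --- crucially --- requires no properness or density assumptions: it holds simply because a function admissible for $\Cp(G;(\overline{\Om};\mu_0))$ with $G$ relatively open is automatically admissible for $\bCp(G;\Om)$.

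Concretely, I would fix $\eps>0$. Since $f$ is quasicontinuous with respect to $(\overline{\Om};\mu_0)$, there is a relatively open set $U\subset\overline{\Om}$ with $\Cp(U;(\overline{\Om};\mu_0))<\eps$ such that $f|_{\overline{\Om}\setm U}$ is real-valued and continuous. Applying the first part of Proposition~\ref{prop-newcap-compare-1} to the relatively open set $U$ gives $\bCp(U;\Om)\le\Cp(U;(\overline{\Om};\mu_0))<\eps$. Thus the very same set $U$ witnesses the $\bCp(\,\cdot\,;\Om)$-quasicontinuity of $f$: it is relatively open in $\overline{\Om}$, has $\bCp(U;\Om)<\eps$, and $f$ restricted to its complement is real-valued continuous. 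Since $\eps>0$ was arbitrary, $f$ is $\bCp(\,\cdot\,;\Om)$-quasicontinuous.

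There is essentially no obstacle; the only point worth stressing is that, because the first half of Proposition~\ref{prop-newcap-compare-1} is unconditional, no appeal to Theorem~\ref{thm-quasicont}, to the outer-capacity property, or to any density of continuous functions is needed here --- the corollary holds in full generality. (One should also simply record that the exceptional set furnished by the hypothesis is indeed a subset of $\overline{\Om}$, which it is by definition of quasicontinuity on $(\overline{\Om};\mu_0)$.)
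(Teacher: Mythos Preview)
Your proof is correct and matches the paper's own reasoning: the paper states that the corollary follows directly from Proposition~\ref{prop-newcap-compare-1}, since in the definition of quasicontinuity only relatively open subsets of $\overline{\Om}$ are considered. Your observation that only the first (unconditional) half of Proposition~\ref{prop-newcap-compare-1} is needed is also accurate.
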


We can also improve upon Lemma~\ref{lem-Phicap-Omm}.

\begin{prop} \label{prop-Phicap-Omm-equal}
Assume that $X$ is proper and locally connected,
and that\/ $\Om$ is a bounded domain which is finitely connected at the boundary.
Let $E \subset \overline{\Om}$. Then 
\[ 
	\bCp(\Phi^{-1}(E);\Omm)
	= \bCp(E;\Om).
\]
\end{prop}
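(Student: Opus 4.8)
The plan is to prove the two inequalities separately, reusing the setup from the proof of Lemma~\ref{lem-Phicap-Omm}. One direction, namely $\bCp(\Phi^{-1}(E);\Omm) \le \bCp(E;\Om)$, is already contained in Lemma~\ref{lem-Phicap-Omm}, so the real work is the reverse inequality $\bCp(E;\Om) \le \bCp(\Phi^{-1}(E);\Omm)$. The idea is: given a function $\ut \in \At_{\Phi^{-1}(E)}$ with respect to $\Np(\Omm)$ which is nearly optimal, produce from it a function $u \in \A_E$ with respect to $\Np(\Om)$ of the same norm. Since $\Np(\Om) = \Np(\Omm)$ with the same norm (Section~\ref{sect-NpOmm}), and as sets $\Om = \Omm$, we may simply take $u = \ut$ as a function on $\Om$; the only thing to check is that $u$ satisfies the boundary condition in the definition of $\A_E$, i.e.\ that $\liminf_{\Om \ni y \to x} u(y) \ge 1$ for every $x \in E \cap \bdy \Om$ (with the limit now taken in the \emph{given} metric $d$).

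Here is where the assumption that $\Om$ is finitely connected at the boundary enters crucially. Fix $x \in E \cap \bdy \Om$. Then $\Phi^{-1}(x) = \{\xi_1,\ldots,\xi_N\} \subset \bdym \Om$ is a finite set (Theorem~\ref{thm-clOmm-cpt} and the description after it), and each $\xi_i \in \Phi^{-1}(E) \cap \bdym \Om$, so $\liminf_{\Omm \ni y \to \xi_i} \ut(y) \ge 1$ for each $i$. I would argue that if $\Om \ni y_j \to x$ in $d$, then, after passing to a subsequence, $y_j \to \xi_i$ in $\dM$ for some fixed $i$ (by compactness of $\clOmm$, the sequence $\{y_j\}$ has a $\dM$-convergent subsequence, and its $\dM$-limit must lie in $\Phi^{-1}(x)$ since $\Phi$ is continuous and $\Phi(y_j) = y_j \to x$). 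Hence $\liminf_j \ut(y_j) \ge 1$ along that subsequence, and since this holds for every subsequence (each has a further subsequence $\dM$-converging to some $\xi_i$), we get $\liminf_{\Om \ni y \to x} u(y) \ge 1$. Thus $u \in \A_E$, and taking the infimum over admissible $\ut$ gives $\bCp(E;\Om) \le \bCp(\Phi^{-1}(E);\Omm)$.

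I expect the main obstacle to be the careful handling of the $\liminf$ passage between the two metrics: one must be sure that \emph{every} $d$-convergent sequence approaching $x$ clusters (in $\dM$) only at points of $\Phi^{-1}(x)$, and that the finiteness of $\Phi^{-1}(x)$ is what lets the pointwise $\liminf$ bounds at the $\xi_i$ combine into a single $\liminf$ bound at $x$. (If $\Phi^{-1}(x)$ were infinite this argument would still work, since we only need that every cluster point lies in $\Phi^{-1}(x)$ and $\ut \ge $ the relevant bound near each such point, but finiteness makes the bookkeeping cleanest.) A minor point to record is that the norm identity $\|u\|_{\Np(\Om)} = \|\ut\|_{\Np(\Omm)}$ is immediate because the two spaces coincide isometrically; no use of $\mu_0$ or of $\overline{\Om}$ is needed here, in contrast to Lemma~\ref{lem-fPhi}. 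Once both inequalities are in hand the proof is complete.

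\begin{proof}
The inequality $\bCp(\Phi^{-1}(E);\Omm) \le \bCp(E;\Om)$ is part of Lemma~\ref{lem-Phicap-Omm}. To prove the reverse inequality we may assume $\bCp(\Phi^{-1}(E);\Omm)<\infty$. Let $\eps>0$ and pick $\ut \in \At_{\Phi^{-1}(E)}$, admissible in the definition of $\bCp(\Phi^{-1}(E);\Omm)$, with
\[
    \|\ut\|_{\Np(\Omm)}^p \le \bCp(\Phi^{-1}(E);\Omm) + \eps.
\]
Regard $\ut$ as a function $u$ on $\Om$; since $\Om=\Omm$ as sets and $\Np(\Om)=\Np(\Omm)$ with the same norm (see Section~\ref{sect-NpOmm}), we have $u \in \Np(\Om)$ and $\|u\|_{\Np(\Om)} = \|\ut\|_{\Np(\Omm)}$. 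Also $u \ge 1$ on $E \cap \Om = \Phi^{-1}(E) \cap \Omm$.

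We claim $u \in \A_E$, i.e.\ $\liminf_{\Om \ni y \to x} u(y) \ge 1$ for all $x \in E \cap \bdy \Om$, the limit taken with respect to $d$. Fix such an $x$ and a sequence $\Om \ni y_j \to x$ in $d$; it suffices to show that every subsequence of $\{y_j\}$ has a further subsequence along which $\liminf u \ge 1$. By the compactness of $\clOmm$ (Theorem~\ref{thm-clOmm-cpt}, using that $\Om$ is finitely connected at the boundary), a given subsequence has a sub-subsequence, still denoted $\{y_j\}$, converging in $\dM$ to some $\xi \in \clOmm$. Since $\Phi$ is continuous and $\Phi(y_j)=y_j \to x$, we have $\Phi(\xi)=x$, so $\xi \in \bdym \Om$ and $\xi \in \Phi^{-1}(E)$. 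As $\ut \in \A_{\Phi^{-1}(E)}$,
\[
    \liminf_{j \to \infty} u(y_j) = \liminf_{\Omm \ni y_j \dMto \xi} \ut(y_j) \ge 1.
\]
This proves the claim, so $u \in \A_E$ and
\[
    \bCp(E;\Om) \le \|u\|_{\Np(\Om)}^p = \|\ut\|_{\Np(\Omm)}^p \le \bCp(\Phi^{-1}(E);\Omm) + \eps.
\]
Letting $\eps \to 0$ completes the proof.
\end{proof}
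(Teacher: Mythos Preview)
Your proof is correct and follows essentially the same approach as the paper's: cite Lemma~\ref{lem-Phicap-Omm} for one inequality, and for the reverse take an admissible $u$ for $\bCp(\Phi^{-1}(E);\Omm)$, then use compactness of $\clOmm$ (Theorem~\ref{thm-clOmm-cpt}) together with the continuity of $\Phi$ to verify the $d$-boundary condition at each $x\in E\cap\bdy\Om$. The only cosmetic differences are that the paper argues the $\liminf$ step by contradiction rather than via the ``every subsequence has a further subsequence'' formulation, and it takes the infimum over all admissible $u$ directly instead of fixing a near-optimal one and letting $\eps\to0$.
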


\begin{proof}
The inequality $\bCp(\Phi^{-1}(E);\Omm) \le \bCp(E;\Om)$
was proved in Lemma~\ref{lem-Phicap-Omm}.
(Note that the proof of the first inequality in Lemma~\ref{lem-Phicap-Omm} 
only requires $X$ to be locally connected, through the equality
$\Np(\Om)=\Np(\Omm)$.) For the converse inequality,  assume
that $\bCp(\Phi^{-1}(E);\Omm)<\infty$  and let $u \in \A_{\Phi^{-1}(E)}$.
For $x \in E \cap \bdy \Om$ let  $x_j \in \Om$, $j=1,2,\cdots$, be a sequence of points such 
that $x_j \to x$ in the metric $d$ as $j \to \infty$.

We shall show that $\liminf_{j \to \infty} u(x_j) \ge 1$. Assume not.
Then there is a subsequence, also denoted $\{x_j\}_{j=1}^\infty$,
such that $\lim_{j \to \infty} u(x_j)<1$. 
By the compactness of $\clOmm$ (see Theorem~\ref{thm-clOmm-cpt} and the
comment after it), the sequence $\{x_j\}_{j=1}^\infty$ 
has a convergent subsequence $\{x_{j_k}\}_{k=1}^\infty$ tending 
in the metric $\dM$ to some point $x_0 \in \clOmm$. As $\Phi$ is Lipschitz on $\clOmm$, we have
\[
    \Phi(x_0) = \lim_{k \to \infty} \Phi(x_{j_k})  =  \lim_{k \to \infty} x_{j_k} = x,
\]
i.e.\ $x_0\in\Phi^{-1}(E)$. Since $u \in \A_{\Phi^{-1}(E)}$, it follows that
\[
      \liminf_{k \to \infty} u(x_{j_k}) \ge 1,
\]
contradicting $\lim_{j \to \infty} u(x_j)<1$. Thus $\liminf_{j \to \infty} u(x_j) \ge 1$ and
$u \in \A_E$ (observe that $\Np(\Om)=\Np(\Omm)$ by the discussion
at the beginning of Section~\ref{sect-NpOmm}.). Hence 
\[
     \bCp(E;\Om)  \le \|u\|_{\Np(\Om)}^p
\]
and taking infimum over all $u\in\A_{\Phi^{-1}(E)}$ completes the proof.
\end{proof}

\end{document}